\newcommand{\mysection}[1]{\section{#1}
      \setcounter{equation}{0}}
\newtheorem{theorem}{Theorem}[section]
\newtheorem{lemma}[theorem]{Lemma}
\newtheorem{proposition}[theorem]{Proposition}
\newtheorem{corollary}[theorem]{Corollary}
\theoremstyle{definition}
\newtheorem{assumption}{Assumption}[section]
\newtheorem{definition}{Definition}[section]
\newtheorem{example}{Example}[section]
\theoremstyle{remark}
\newtheorem{remark}{Remark}[section]
\newcommand\bC{\mathbb{C}}
\newcommand\bR{\mathbb{R}}
\newcommand\bG{\mathbb{G}}
\newcommand\bH{\mathbb{H}}
\newcommand\fra{\mathfrak{a}}
\newcommand\frb{\mathfrak{b}}
\newcommand{\ga}{\mathfrak{a}}
\newcommand{\gb}{\mathfrak{b}}
\newcommand{\gp}{\mathfrak{p}}
\newcommand{\gq}{\mathfrak{q}}
\newcommand\cB{\mathcal{B}}
\newcommand\cF{\mathcal{F}}
\newcommand\cH{\mathcal{H}}
\newcommand\cK{\mathcal{K}}
\newcommand\cL{\mathcal{L}}
\newcommand\cM{\mathcal{M}}
\newcommand\cP{\mathcal{P}}
\newcommand\cR{\mathcal{R}}
\newcommand\cO{\mathcal{O}}
 \newcommand{\sumstar}%
 {\operatornamewithlimits{\sum@\kern-.2em\raise1ex\hbox{*}}}
\renewcommand{\>}{\rangle}
\begin{document}

\title[stochastic finite difference schemes]
{On stochastic finite difference schemes}

\author[I. Gy\"ongy]{Istv\'an Gy\"ongy}
\address{School of Mathematics and Maxwell Institute,
University of Edinburgh,
King's  Buildings,
Edinburgh, EH9 3JZ, United Kingdom}
\email{gyongy@maths.ed.ac.uk}

\subjclass[2000]{65M15, 35J70, 35K65}
\keywords{Cauchy problem, 
finite differences, extrapolation to the limit, 
 Richardson's method }

\begin{abstract}
Finite difference schemes in the spatial variable for degenerate stochastic parabolic PDEs are investigated. 
Sharp results on the rate of $L_p$ and almost sure convergence of the finite difference approximations 
are presented and results on Richardson extrapolation are established 
for stochastic parabolic schemes under smoothness assumptions.  
\end{abstract}

\maketitle

\mysection{Introduction}                        \label{section 1.2.5.12}

We consider finite difference schemes to 
stochastic partial differential equations (SPDEs).  The 
stochastic PDEs we are interested in are linear second order stochastic parabolic equations in the whole $\bR^d$ in the spatial variable.  
They may degenerate and become first order stochastic or deterministic PDEs. 
The finite difference schemes which we investigate are spatial discretizations of such SPDEs. One can view them as (possibly degenerate) infinite systems of stochastic differential equations, whose components describe the time evolution of approximate values at the grid points of the solutions to SPDEs.  Adapting the approach of \cite{K1} we view stochastic finite difference schemes, 
like in \cite{GK4} and 
\cite{G11}, as stochastic 
equations for random fields on the whole $\bR^d$ not only on grids.  

Our aim is to investigate the rate of convergence in the supremum norm of the 
finite difference approximations. We show that under the {\it stochastic parabolicity}  condition, if the coefficients and the data are sufficiently smooth, then 
the solutions to the finite difference schemes admit power series expansions  in terms of $h$, the mesh-size of the grid. The coefficients in these power series
are random fields, independent of $h$, and for any $p>0$ the $p$-th moments of the sup norm of the remainder term is estimated by a power of $h$. 
This is Theorem \ref{theorem main}. Hence for $h\to0$ we get the convergence (and the sharp rate) of the solutions of the finite difference schemes to a random field which is the solution to the corresponding SPDE. Moreover, by Richardson extrapolation we get that the rate of convergence can be accelerated to any high order if one takes appropriate mixtures of approximations corresponding to different grid sizes. In Theorem \ref{theorem 2.15.10.12} we obtain convergence estimates for any (high) $p$-th moments of the 
sup norm of the approximation error. Hence in Theorem \ref{theorem almost sure} 
we get almost sure rate of convergence of the finite difference approximations and of the accelerated approximations in sup norms. 

Theorems \ref{theorem main} and  \ref{theorem 2.15.10.12} are  generalisations of the main results, Theorems 2.3  and 2.5 of  \cite{G11}, which besides the conditions on the smoothness of the coefficients and of the data require also a {\em  smooth factorisation}  condition to be satisfied. 
Namely, in \cite{G11} it is assumed that the matrix 
$\tilde a=\tilde a(\omega,t,x)$ from the 
stochastic parabolicity condition can be written as the product of a sufficiently smooth matrix and its transpose. This assumption, however, may not hold even 
if $\tilde a$ is an infinitely differentiable nonnegative matrix, and therefore 
it strongly restricts the applicability of the results of \cite{G11}. 

The main challenge in the present paper is to estimate the spatial derivatives 
of the solutions to stochastic finite difference schemes without assuming 
the smooth factorisation condition. 
This is achieved by Theorem \ref{theorem estimate}. 

The method of finite differences is one of the basic methods of solving numerically partial differential equations. The rate of convergence of various finite difference schemes 
for elliptic and parabolic PDEs have been studied extensively in the literature when 
the equations are non degenerate, but there are only a few publications dedicated to the numerical analysis of finite difference schemes for degenerate equations.  
Sharp rate of convergence in {\em sup} norm are obtained in \cite{DK}  
for fully discretized degenerate elliptic and parabolic PDEs. The finite difference 
schemes investigated in \cite{DK}  are {\it monotone schemes}. In \cite{GK3} 
for a large class of monotone finite difference schemes (in the spatial variables)  
power series expansions are obtained and Richardson extrapolation is used 
to get accelerated schemes for degenerate elliptic and parabolic PDEs.  
We note that the finite difference schemes we study 
in the present paper are not necessarily monotone, and our main theorems 
extend some of the results of \cite{GK3} to non monotone finite difference 
schemes for degenerate parabolic PDEs.

About a century ago L.F. Richardson had the idea that the order of accuracy of  
an approximation method, which depends on a parameter can be dramatically improved if the approximation calculated by the method admits a power series expansion in the parameter. One need only take appropriate linear combinations of approximations corresponding to different proportions of the 
parameter values to eliminate the lower order terms in the power series to get 
approximations with accuracy of higher order.  
Richardson used this idea to solve numerically some PDEs 
by finite difference methods (see \cite{Ri} and \cite{RG}). He called his method 
{\em a deferred approach to the limit}. It is often called 
{\em Richardson extrapolation} in numerical analysis.  Richardson extrapolation 
is applied by W. Romberg to the trapezium rule to obtain high order  
approximations of definite integrals (see \cite{Rom}). Since then Richardson 
extrapolation has been applied to a wide range of numerical approximations. 
(See, for example, the textbook \cite{MS}, the monograph \cite{Si}  and the survey papers \cite{Br} and \cite{Jo}). 
To show that it is applicable to an approximation method to solve numerically 
a problem under certain conditions 
one has to show the existence of a suitable power series expansion, which can be quite difficult in some situations. A series expansion for  the weak convergence of Euler approximations of SDEs was obtained in \cite{TT} and then in a more general setting in \cite{MT}. The applicability of Richardson extrapolation to accelerate the convergence of 
finite difference schemes in the spatial variable for stochastic PDEs was shown in \cite{GK}, under 
the strong stochastic parabolicity condition, and then 
it was shown in \cite{G11} for degenerate SPDEs under the smooth  
factorisation condition mentioned above. The results from \cite{GK} and \cite{G11} 
have been generalised to finite difference schemes in temporal and spatial variables in  \cite{H1} and \cite{H2}. 

The paper is organised as follows. In the next section we formulate our main 
result, Theorem \ref{theorem main} on power series expansions for stochastic finite 
difference schemes. Hence our results on the $L_p$ and almost sure rate of convergence of the accelerated schemes,  Theorems \ref{theorem 2.15.10.12} and 
\ref{theorem almost sure}, follow easily. In Section \ref{section 1.12.10.12} we formulate also an existence and uniqueness theorem, 
Theorem \ref{theorem 1.14.10.12},  for stochastic parabolic (possibly degenerate) equations. Though this result is known from 
\cite{KR} and \cite{GK}, we give a new proof of it in Section \ref{section tools} 
by construction the solutions to SPDEs via finite difference approximations. 
In Section 3 we present the technical tools to prove 
our key estimate, Theorem \ref{theorem estimate} on stochastic finite difference 
schemes.  
In Section \ref{section coefficients} we prove an existence and uniqueness theorem, 
Theorem  \ref{theorem 5.28.1} for a system of SPDEs by the help of Theorem \ref{theorem 1.14.10.12}. 
Theorem  \ref{theorem 5.28.1} plays a crucial role in identifying the coefficients 
of the expansion in Theorem \ref{theorem main}. We prove Theorem \ref{theorem main} in 
Section \ref{section proof}, adapting a method from \cite{GK4}.

We conclude by introducing some notation. We fix a complete probability space 
$(\Omega,\cF,P)$ and an increasing family 
$\mathbb F=(\cF_t)_{t\geq0}$ of $\sigma$-algebras  $\cF_t\subset\cF$  
throughout the paper. We assume that $\mathbb F$ is right continuous and that 
$\cF_0$ contains all $P$-zero sets. The $\sigma$-algebra of the predictable subsets of $\Omega\times[0,\infty)$ is denoted by $\cP$, and  the 
$\sigma$-algebra of the Borel subsets of $\bR^d$ is denoted by $\cB(\bR^d)$. The notation $|v|$ means the Euclidean norm of $v$ 
for vectors 
$v\in\bR^d$, and it stands for the 
Hilbert-Schmidt norm of $v$ 
for matrices $v\in\bR^{d\times m}$. 
 For the standard basis $e_1$,...,$e_d$ in $\bR^d$ we use the notation 
$$
D_{\alpha}=D_i=\frac{\partial}{\partial x^i}=\partial_{e_i}
\quad 
\text{for $\alpha=i\in\{1,...,d\}$}, 
$$
and we use $D_{\alpha}=\partial_{e_{\alpha}}$ 
for the identity operator when $\alpha=0$. 
For vectors $\lambda=(\lambda^{1},\dots,\lambda^d)\in\bR^d$ we use the notation 
$\partial_{\lambda}\varphi=\sum_{i=1}^{d}\lambda^iD_i\varphi$ for directional derivatives of functions $\varphi$. For an integer $k>0$ the notation 
$|D^k\varphi|$ is used for the Euclidean norm of the vector whose components 
are (in some ordering) 
the partial derivatives of $\varphi$ of order $k$, and $|D^k\varphi|$ 
means $|\varphi|$ for $k=0$. 
For an integer $m\geq0$ we use the notation $H^m=W^m_2$ for 
the Sobolev space defined as the completion of $C_0^{\infty}=C_0^{\infty}(\bR^d)$,  the smooth functions $\varphi$ with compact support  
on $\bR^d$,  in the norm $|\varphi|_m$ defined by 
$$
|\varphi|_{m}^2=|\varphi|^2_{H^m}=\sum_{j=0}^k\int_{\bR^d}|D^j\varphi(x)|^2\,dx. 
$$
The Sobolev spaces $H^m(l_2)=W^m_2(l_2)$ of functions with values in 
$l_2=\{(c_n)_{n=1}^{\infty}\in\bR^{\infty}:\sum_j|c_j|^2<\infty\}$ are defined analogously, and the notation $|\varphi|_m=|\varphi|_{H^m(l_2)}$ is used 
for the norm of $\varphi$ in $H^m(l_2)$. The inner product of functions $\psi$ 
and $\varphi$ in $H^0=L_2(\bR^d)$ is denoted by $(\psi,\varphi)$. 
The summation convention with respect to repeated indices with values in discrete sets is used thorough the paper, unless it is otherwise indicated at some expressions.

\mysection{Formulation of the main results}          \label{section 1.12.10.12}

We consider the stochastic PDE
\begin{align}                            
du_t(x)=&(a^{\alpha\beta}_t(x)
D_{\alpha}D_{\beta}u_t(x)+f_t(x))\,dt
\nonumber\\
&+(b^{\alpha,r}_t(x)D_{\alpha}u_t(x)
+g^{r}_t(x))\,dw^r_t, 
                                              \label{equation}
\end{align}
for $(t,x)\in[0,T]\times\bR^d=:H_T$ for a fixed $T\in(0,\infty)$, with initial condition 
\begin{equation}                            \label{ini}
u_0(x)=\psi(x)\quad x\in\bR^d, 
\end{equation}
where $(w^r)_{r=1}^{\infty}$ is 
a sequence of independent Wiener martingales  
with respect to $\mathbb F$.  
The coefficients, $a^{\alpha\beta}=a^{\beta\alpha}$ 
and $b^{\alpha}=(b^{\alpha, r})_{r=1}^{\infty}$,  
are $\cP\otimes\cB(\bR^d)$-measurable 
bounded functions on $\Omega\times H_T$, with values in $\bR$ and in 
$l_2$ 
respectively, for every $\alpha,\beta\in\{0,1,...,d\}$.  
The free terms, 
$f=f_t(\cdot)$ and $g=(g^{r}_t(\cdot))_{r=1}^{\infty}$ 
are $H^1$-valued and $H^2(l_2)$-valued adapted processes 
for $t\geq0$, respectively, 
such that almost surely 
$$
\int_0^T|f_t|_{H^1}^2\,dt<\infty,
\quad 
\int_0^T|g_t|_{H^2(l_2)}^2\,dt<\infty. 
$$

For a $\cF_0$-measurable $H^1$-valued initial value $\psi$ the solution 
to \eqref{equation}-\eqref{ini} is defined as follows. 

\begin{definition}                                    \label{definition solution}
An $H^1$-valued adapted weakly 
continuous process $u=(u_t)_{t\in[0,T]}$ 
is a (generalized) solution to 
\eqref{equation}-\eqref{ini} on a stochastic interval $[0,\tau]$ 
for a stopping time $\tau\leq T$, if almost 
surely 
\begin{align}                                         
(u_t,\varphi)=&(\psi,\varphi)+
\int_0^t
\{(\partial_{e_{\alpha}}u_s,
\partial_{-e_{\beta}}(a^{\alpha\beta}_s\varphi))+
(f_s,\varphi)\}\,ds       \nonumber\\
&+\int_0^t(b^{\alpha,r}_s\partial_{e_{\alpha}}u_s+g^{r}_s,\varphi)\,dw_s^r, 
\nonumber
\end{align}
for $t\in[0,\tau]$ and $\varphi\in C_0^{\infty}$, 
where  the summation convention is in force with 
respect to the repeated 
indices $\alpha,\beta\in\{0,1,...,d\}$ and 
$r\in\{0,1,...\}$. 
\end{definition}

We approximate equation \eqref{equation} 
with finite difference schemes in the spatial variable. 
To describe these schemes let $\Lambda_1\subset\bR^d$ be a finite set, 
containing the zero vector, and  
set $\Lambda_0=\Lambda_1\setminus\{0\}$. 
For $h\in\bR\setminus\{0\}$ define the grid 
$$
\bG_{h}=\{h(\lambda_1+...+\lambda_n)
:\lambda_i\in\Lambda_1\cup\{-\Lambda_1\},n=1,2,...\}, 
$$
the finite difference operators $\delta_{h,\lambda}$, 
$\delta_{\lambda}^h$ by 
$$
\delta_{h,\lambda}\varphi(x)
=\frac{1}{h}(\varphi(x+h\lambda)-\varphi(x)), 
\quad \delta^h_{\lambda}
=\frac{1}{2}(\delta_{h,\lambda}+\delta_{-h,\lambda})
=\frac{1}{2}(\delta_{h,\lambda}-\delta_{h,-\lambda}), 
$$
for $\lambda\in\Lambda_0\cup\{-\Lambda_0\}$, 
and let $\delta_{h,\lambda}$ and  
$\delta_{\lambda}^h$ be the identity operator for 
$\lambda=0$. 

For $h\neq0$ we consider the equation 
\begin{equation}                                                                       \label{scheme}
du_t^h(x)=(L^h_tu_t^h(x)+f_t(x))\,dt
+(M^{h,r}_tu_t(x)+g^{h}_t(x))\,dw^{r}_t
\end{equation}
for $t\in[0,T]$ and $x\in\bG_h$, with initial condition  
\begin{equation}                         \label{schemeini}
u_0^h(x)=\psi(x), \quad\text{for $x\in\bG_h$}, 
\end{equation}
where 
\begin{equation}                        \label{1.26.10.12}
L^h_t=\sum_{\lambda,\mu\in\Lambda_1}
\fra_t^{\lambda\mu}\delta^h_{\lambda}\delta^h_{\mu}
+\sum_{\gamma\in\Lambda_0}
(\frak p_t^{\gamma}\delta_{h,\gamma}
-\frak q_t^{\gamma}\delta_{-h,\gamma}), 
\end{equation}
\begin{equation}                        \label{2.26.10.12}
M^{h,r}_t=\sum_{\lambda\in\Lambda_1}
\frb_t^{\lambda,r}\delta_{\lambda}^h, 
\quad r=1,2,..., 
\end{equation}
with real valued 
$\cP\otimes{\mathcal B}(\bR^d)$-measurable 
bounded functions, $\fra^{\lambda\mu}=\fra^{\mu\lambda}$, 
$\frak p^{\gamma}$, $\frak q^{\gamma}$,  
and $l_2$-valued $\cP\otimes{\mathcal B}(\bR^d)$-measurable 
bounded functions 
$\frb^{\lambda}=(b^{\lambda,r})_{r=1}^{\infty}$ 
on $\Omega\times H_T$, 
for all $\lambda$, $\mu\in\Lambda_1$ and $\gamma\in\Lambda_0$.  

Equation \eqref{scheme} is an infinite system of stochastic differential 
equations. We look for its solutions in the space of adapted 
stochastic processes with values in 
$l_{h,2}$, the space of real functions $\phi$ on $\bG_h$ with the norm 
$|\phi|_{l_{h,2}}$ defined by 
$$
|\phi|_{l_{h,2}}^2
:=\sum_{x\in\bG_h}|\varphi(x)|^2h^d<\infty. 
$$
\begin{remark}                                 \label{remark 4.13.10.14}
Due to the boundedness of $\fra^{\lambda\mu}$ 
and $\frb^{\lambda}$ for $\lambda,\mu\in\Lambda_1$, 
it is easy to see 
that for each $h\neq0$ there is a constant $C$ 
such that for all $\phi\in l_{h,2}$
$$
|L^h\phi|_{l_{h,2}}^2\leq C|\phi|_{l_{h,2}}^2,
\quad 
\sum_{r=1}^{\infty}|M^{h,r}\phi|_{l_{h,2}}^2\leq C|\phi|^2_{l_{h,2}}. 
$$
Thus by standard results on SDEs in Hilbert spaces the 
initial value problem 
\eqref{scheme}-\eqref{schemeini} admits a unique solution 
$(u^h_t)_{t\in[0,T]}$, provided almost surely 
\begin{equation}                        \label{3.13.10.12}
|\psi|_{l_{h,2}}^2+ 
\int_0^T|f_t|^2_{l_{h,2}}+|g_t|^2_{l_{h,2}}\,dt<\infty \,\,(a.s.), 
\end{equation}
where $|g_t|^2_{l_{h,2}}=\sum_{r=1}^{\infty}|g_t^r|^2_{l_{h,2}}$. 
\end{remark}

Clearly, for $h\to0$ 
$$
\delta_{h,\lambda}\varphi(x)\to \partial_{\lambda}\varphi(x), 
\quad 
\delta^{h}_{\lambda}\varphi(x)\to \partial_{\lambda}\varphi(x)
$$
for smooth functions $\varphi$ on $\bR^d$. 
Thus in order $L^h$ and $M^{h,r}$ approximate the differential operators 
$$
\cL=a^{\alpha\beta}D_{\alpha}D_{\beta}, \quad\text{and}\quad 
\cM^r=b^{\alpha,r}D_{\alpha}, 
$$
respectively, 
we make the following assumption. 
\begin{assumption}                   \label{assumption 1.12.10.12}
For every $i,j=1,...,d$ we have 
$$
a^{ij}=\sum_{\lambda,\mu\in\Lambda_0}\fra^{\lambda\mu}\lambda^{i}\mu^{j}, 
\quad 
a^{0i}+a^{0i}=\sum_{\lambda\in\Lambda_0}
(\fra^{0\lambda}+\fra^{\lambda0}+\frak p^{\lambda}-\frak q^{\lambda})\lambda^i, 
$$
$$
b^{i}=\sum_{\lambda\in\Lambda_0}\frb^{\lambda}\lambda^i,
\quad 
a^{00}=\frak a^{00}, \quad b^0=\frb^0.
$$
\end{assumption}

\begin{example}
                                            \label{example 5.22.1}
Set $\Lambda =\{e_{0}, e_{1},...,  e_{d}\}$, where $e_{0}=0$
and $e_{i}$ is the $i$th basis vector, 
and  let 
\begin{equation*}
\ga^{e_{\alpha} e_{\beta}}_{t}= a^{\alpha\beta}_{t},\quad
\gb^{e_{\alpha}}_{t}=b^{\alpha}_{t},
\quad \alpha,\beta=0,1,...,d, 
\end{equation*}
\begin{equation*}
\quad\gq^{e_{\gamma}}=\gp^{e_{\gamma}}=0
\quad \gamma=1,...,d.
\end{equation*}
\end{example}

\begin{example}
                                     \label{example 5.22.2}
Take $\Lambda =\{e_{0}, e_{1},...,  e_{d}\}$ with $e_{0}=0$ 
as before,  
and define 
\begin{equation*}
\ga^{e_{\alpha} e_{\beta}}= a^{\alpha\beta},
\quad
\ga^{e_{\alpha} 0}=\ga^{0e_{\alpha}}=0, 
\quad \alpha,\beta=1,...,d, 
\end{equation*}
$$
\gp^{e_{\gamma}}=\frac{1}{2}a^{0\gamma}+\theta^{\gamma}, 
\quad 
\gq^{e_{\gamma}}=-\frac{1}{2}a^{\gamma0}+\theta^{\gamma}, 
\quad \gamma=1,...,d, 
$$
\begin{equation*}
\ga^{00}=a^{00},
\quad  
\gb^{e_{\alpha}}_{t}=b^{\alpha}_{t},
\quad \alpha=0,1,...,d, 
\end{equation*}
where $\theta^1,...,\theta^d$ 
are any constants such that 
$|a^{0,\gamma}|\leq 2\theta^{\gamma}$, 
$|a^{\gamma0}|\leq 2\theta^{\gamma}$ 
for each $\gamma\in\{1,...,d\}$.
\end{example}

To formulate our main results we make further assumptions. 
Let $\frak m\geq0$ be an integer, and let $K\geq0$ be a constant. 
Set $|\Lambda_0|^2=\sum_{\lambda\in\Lambda_0}|\lambda|^2$ and  
\begin{equation}                                                \label{1.15.09.13}
\cK_l^2(t)=\int_0^t(|f_t|^2_l+|g|^2_{l+1})\,dt
\end{equation}
for $t\geq0$ and integers $l\geq0$. 

\begin{assumption}                \label{assumption 1.15.9.12}     
The functions $\fra^{\lambda\mu}$, 
$\frb^{\lambda}$,
$D\frb^{\lambda}$ 
and their partial derivatives in $x\in\bR^d$ 
up to order $\frak m$   
are 
continuous in $x$ 
and they are bounded in magnitude by 
$K$ for all $\lambda$ and $\mu\in\Lambda_1$. 
Moreover, for 
$\lambda,\mu\in\Lambda_0$ 
the partial derivatives in $x$ 
of $\fra^{\lambda\mu}$ up to order 
$\max(\frak m,2)$, of $\frb^{\lambda}$ up 
to order $\max(\frak m+1,2)$, 
and of $\fra^{\lambda0}$, 
$\frak p^{\lambda}$, $\frak q^{\lambda}$ 
up to order 
$\max(\frak m,1)$
are continuous in $x$ 
and in magnitude are bounded by $K$.
\end{assumption}

\begin{assumption}                         \label{assumption 2.15.9.12}                                                 
The initial value $\psi$ is an $H^{\frak m}$-valued 
$\cF_0$-measurable 
random variable, 
$(f_t)_{t\geq0}$ is an $H^{\frak m}$-valued 
predictable process and 
$(g_t)_{t\geq0}$ is 
an $H^{\frak m+1}(l_2)$-valued 
predictable process, 
such that almost surely $\cK_{\frak m}^2(T)<\infty$.  
\end{assumption}

\begin{assumption}                           \label{assumption 3.7.5.12}
For $P\otimes dt\otimes dx$-almost all 
$(\omega,t,x)\in\Omega\times H_T$ 
we have
\begin{itemize} 
\item[(i)]
$
\sum_{\lambda,\mu\in\Lambda_0}
(\fra^{\lambda\mu}-\tfrac{1}{2}\frb^{\lambda,r}\frb^{\mu,r})z_{\lambda}z_{\mu}\geq0
$
for all $z_{\lambda}\in\bR$, $\lambda\in\Lambda_0$;
\item[(ii)] $\frak p^{\lambda}\geq0$, 
$\frak q^{\lambda}\geq0$ for all $\lambda\in\Lambda_0$.  
\end{itemize} 
\end{assumption}
\bigskip

\begin{remark}                           \label{remark 1.13.10.12}
By virtue of 
Sobolev's theorem on embedding $H^{\frak m}$ 
into $C_b$, the space of 
bounded and continuous functions on $\bR^d$, 
if Assumption \ref{assumption 2.15.9.12} holds with 
$\frak m>d/2$, then we can, and will always assume that 
almost surely $\psi$, 
$f_t$ and $g_t$ are continuous functions in $x$ for all $t\in [0,T]$.  
Moreover, in this case 
\eqref{3.13.10.12} also holds, due to a result on  
embedding $H^{\frak m}$ into $l_{h,2}$, 
see Lemma \ref{lemma beagyazas} below.  
Consequently, if Assumption \ref{assumption 2.15.9.12} holds 
with $\frak m>d/2$, then \eqref{scheme}-\eqref{schemeini} admits a unique 
solution $(u^h)_{t\in[0,T]}$. 
\end{remark}

An existence and uniqueness theorem for \eqref{equation}-\eqref{ini}  
reads as follows. 

\begin{theorem}                               \label{theorem 1.14.10.12}
Let Assumptions \ref{assumption 1.12.10.12}, 
\ref{assumption 1.15.9.12},  
\ref{assumption 2.15.9.12} and \ref{assumption 3.7.5.12}
hold with $\frak m\geq1$. Then \eqref{equation}-\eqref{ini} 
has a unique solution $(u_t)_{t\in[0,T]}$. 
Moreover, $u$ is an $H^{\frak m}$-valued 
weakly continuous process, it is strongly continuous 
as an $H^{\frak m-1}$-valued process, and for every $p>0$ 
and stopping time $\tau\leq T$ we have 
\begin{equation}                                     \label{1.15.10.12}
E\sup_{t\leq \tau}|u_t|^p_{l}\leq N(E|\psi|_l^p+E\cK^p_{l}(\tau))  
\end{equation}
for every integer $l\in[0,\frak m]$, 
where $N$ is a constant depending only 
on $K$, $T$, $\frak m$, $d$, $p$ and $|\Lambda_0|$. 
\end{theorem}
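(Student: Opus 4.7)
The plan is to construct the unique solution as the $h\downarrow 0$ limit of the finite-difference approximations $u^h$ defined by \eqref{scheme}--\eqref{schemeini}, as previewed in the introduction. After a preliminary mollification of the data if needed so that Remark \ref{remark 1.13.10.12} applies, the scheme admits a unique $l_{h,2}$-valued solution $u^h$, which I view as a random field on all of $\bR^d$ so that its $H^l$-norms make sense; the transition between grid and continuous Sobolev norms is provided by Lemma \ref{lemma beagyazas}.

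The key step is the $h$-independent a priori bound
$$
E\sup_{t\leq\tau}|u^h_t|_l^p\leq N\bigl(E|\psi|_l^p+E\cK_l^p(\tau)\bigr),\qquad l=0,1,\dots,\frak m,\quad p>0,
$$
which I obtain by invoking the key estimate, Theorem \ref{theorem estimate}. Here Assumption \ref{assumption 1.12.10.12} makes the discrete operators consistent with $\cL$ and $\cM^r$; Assumption \ref{assumption 1.15.9.12} supplies the coefficient regularity needed to commute finite differences with spatial derivatives up to order $\frak m$; Assumption \ref{assumption 3.7.5.12}(i), the discrete stochastic parabolicity, makes the It\^o energy identity on the grid close at every smoothness level; and Assumption \ref{assumption 3.7.5.12}(ii) turns the $\frak p^{\gamma}$, $\frak q^{\gamma}$ corrections into dissipative first-order terms.

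With the uniform bound in hand, I would extract a subsequence $h_n\downarrow 0$ along which $u^{h_n}$ converges weakly in $L_2(\Omega\times[0,T];H^{\frak m})$ and weak-$\ast$ in $L_\infty([0,T];L_p(\Omega;H^{\frak m-1}))$ to some limit $u$. Assumption \ref{assumption 1.12.10.12} implies that for every $\varphi\in C_0^{\infty}$ and every smooth $\phi$,
$$
(L^h_t\phi,\varphi)\to(\cL_t\phi,\varphi),\qquad(M^{h,r}_t\phi,\varphi)\to(\cM^r_t\phi,\varphi),
$$
with $O(h)$ error; transposing the differences onto the test function in the discrete weak form of \eqref{scheme} and using the uniform bound to control remainders, each term in the weak form passes to the limit, yielding the identity of Definition \ref{definition solution}. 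Weak $H^{\frak m}$-continuity and strong $H^{\frak m-1}$-continuity of $u$ then follow from the infinite-dimensional It\^o formula in the Gelfand triple $(H^{\frak m},H^{\frak m-1},H^{\frak m-2})$, and the estimate \eqref{1.15.10.12} is inherited from the uniform bound on $u^{h_n}$ by weak lower semicontinuity of the norms, after removing the mollification.

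Uniqueness is a Gronwall-type energy argument: if $u$ and $v$ are two solutions then $w=u-v$ solves the homogeneous equation, and the continuous stochastic parabolicity $(2a^{ij}-b^{i,r}b^{j,r})\xi_i\xi_j\geq 0$ --- inherited from Assumption \ref{assumption 3.7.5.12}(i) by substituting $z_{\lambda}=\lambda^i\xi_i$ into its quadratic form and invoking Assumption \ref{assumption 1.12.10.12} --- makes It\^o's formula for $|w_t|_0^2$ close to a Gronwall inequality. The main obstacle in the whole scheme is clearly the a priori bound in the second paragraph: obtaining $H^{\frak m}$-control on $u^h$ without any smooth factorisation of $\tilde a$. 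This is precisely the difficulty addressed by Theorem \ref{theorem estimate}, so once that result is available the remainder of the argument follows a fairly standard compactness-and-limit pattern.
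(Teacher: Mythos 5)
Your overall strategy --- construct $u$ as the $h\to0$ limit of the finite difference approximations, with the $h$-uniform bound supplied by Theorem \ref{theorem estimate}, weak compactness, passage to the limit in the weak formulation, and a Gronwall energy argument for uniqueness --- is exactly the route the paper takes (it is split there between Theorem \ref{theorem convergence} and the short proof of Theorem \ref{theorem 1.14.10.12} that follows it). Two remarks on the setup: no mollification of the data is needed, because the paper runs the scheme \eqref{wholescheme}--\eqref{wholeschemeini} on all of $\bR^d$ as an SDE in $H^{\frak m}$ (Proposition \ref{proposition 1.26.10.12}), so $\bar u^h$ is an $H^{\frak m}$-valued process from the start without any restriction $\frak m>d/2$; and Remark \ref{remark 1.13.10.12} plays no role in this proof.

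There are, however, two genuine gaps. First, Assumption \ref{assumption 2.15.9.12} only guarantees that $|\psi|_{\frak m}$ and $\cK_{\frak m}(T)$ are almost surely finite, not that they have finite moments, so you cannot extract a weakly convergent subsequence in $L_2(\Omega\times[0,T];H^{\frak m})$ directly: the right-hand side of the uniform bound may be infinite. Moreover, the claimed estimate \eqref{1.15.10.12} is for every $p>0$ (not just $p\geq2$, where weak compactness is available) and for arbitrary stopping times $\tau\leq T$, neither of which follows from lower semicontinuity of the norm under weak convergence. The paper bridges all of this by a localization: it introduces $\tau_n=\inf\{t:\cR_{\frak m}(t)\geq n\}$, truncated data $\psi^{(n)}, f^{(n)}, g^{(n)}$ and sets $A\in\cF_0$, applies the $p\geq2$ result to the truncated problems, patches the solutions together by uniqueness, and then invokes Lemma 3.2 of \cite{GK2003} to pass from $p\geq2$ to all $p>0$ and to stopping times. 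Without this step your argument proves a weaker statement. Second, "the estimate is inherited by weak lower semicontinuity" glosses over a real difficulty: weak convergence in $\bH^{\frak m}_p$ gives no direct control of $E\sup_{t\leq T}|u_t|^p_{\frak m}$. The paper needs the Banach--Saks theorem to produce convex combinations converging strongly, hence $P\times dt$-a.e., then works with a dense set of times and a countable family of test functions to show $\sup_t|D^{\alpha}u_t|_0\leq\liminf_n\sup_t|u^{h_n}_t|_{\frak m}$ before applying Fatou; this same argument is also what yields the weak $H^{\frak m}$-continuity of $u$. These are standard repairs, but they are where most of the actual work in the paper's proof lives.
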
 

This theorem is a special case of 
Theorem 3.1 from \cite{GK}, 
which improves Theorem 3.1 from \cite{KR}. 
As a by-product of our results on finite difference 
approximations, 
we give a new proof of it in 
Section \ref{section tools}. 

Our aim is to establish   an expansion 
of $u^h$ in the form 
\begin{equation}                                             \label{expansion}
u^h_t(x)
=\sum_{j=0}^k\frac{h^j}{j!}u^{(j)}_t(x)
+h^{k+1}r_t^h(x)  
\end{equation}                    
for integers $k\geq0$, 
where $u^{(0)}$ is the solution of 
\eqref{equation}-\eqref{ini}, $u^{(1)}$,...,$u^{(k)}$  
are random fields on $H_T$, independent of $h$, and 
$r^h$ is a random field on $H_T$ such that 
\begin{equation}                              \label{r}
E\sup_{t\leq T}\sup_{x\in\bG_h}|r^h_t(x)|^p
+E\sup_{t\leq T}|r^h_t|^p_{l_{h,2}}
\leq N(E|\psi|^p_{\frak m}+E\cK_{\frak m}^p(T))
\end{equation} 
holds for any $p>0$, for sufficiently large $\frak m$, 
with a constant $N$ independent of $h$.

\begin{theorem}                          \label{theorem main} 
Let $k\geq0$ be an integer, 
and let Assumptions 
\ref{assumption 1.12.10.12} through  
\ref{assumption 3.7.5.12} 
hold with 
$$
\frak m>2k+3+\frac{d}{2}. 
$$
Then there are 
continuous random fields $u^{(1)}$, $u^{(2)}$,..., $u^{(k)}$ 
on $H_T$, independent of $h$, 
such for each $h>0$ \eqref{expansion} 
holds almost surely 
for $t\in[0,T]$ and $x\in\bG_h$ with a continuous 
random field 
$(r^h_t(x))_{(t,x)\in H_T}$ satisfying \eqref{r} with 
$N=N(K,T,k, d,|\Lambda_0|)$. If $k$ is odd and 
$\frak p^{\lambda}=\frak q^{\lambda}=0$ for all $\lambda\in\Lambda_0$, 
then it is sufficient that Assumptions 
\ref{assumption 1.12.10.12} through  
\ref{assumption 3.7.5.12} hold only 
with 
$$
\frak m> 2k+2+\frac{d}{2}
$$
to have the same conclusion. Moreover, in this case 
\eqref{expansion} and \eqref{r} hold for each $h\neq0$, and $u^{(l)}=0$ 
for all odd $l\leq k$. 
\end{theorem}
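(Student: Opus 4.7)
The plan is to follow the match-powers-of-$h$ strategy of \cite{GK4} and \cite{G11}: Taylor-expand the discrete operators $L^h,M^{h,r}$ in $h$, identify the coefficients $u^{(j)}$ in the ansatz \eqref{expansion} by matching, and then show that the remainder $r^h$ satisfies a finite-difference SPDE on $\bG_h$ whose free terms are controlled in discrete Sobolev norms uniformly in $h$, so that the key a priori estimate of Theorem \ref{theorem estimate} yields \eqref{r}.

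First, since $\delta^h_\lambda\varphi(x)=(2h)^{-1}[\varphi(x+h\lambda)-\varphi(x-h\lambda)]$ is an \emph{even} function of $h$ for smooth $\varphi$, while $\delta_{\pm h,\gamma}\varphi(x)$ admits a full Taylor expansion in $h$, I write
$$L^h_t=\mathcal{L}_t+\sum_{i=1}^{k+1}h^i L^{(i)}_t+h^{k+2}\tilde L^h_t,\qquad M^{h,r}_t=\mathcal{M}^r_t+\sum_{i=1}^{k+1}h^i M^{(i),r}_t+h^{k+2}\tilde M^{h,r}_t,$$
where each $L^{(i)}$ and $M^{(i),r}$ is a linear differential operator whose coefficients are bounded expressions in $\mathfrak{a}^{\lambda\mu},\mathfrak{b}^{\lambda},\mathfrak{p}^{\gamma},\mathfrak{q}^{\gamma}$ and the lattice vectors, and with $L^{(0)}=\mathcal{L}$, $M^{(0),r}=\mathcal{M}^r$ by Assumption \ref{assumption 1.12.10.12}. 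The Taylor remainders $\tilde L^h,\tilde M^{h,r}$ admit $h$-uniform bounds in suitable Sobolev norms via the integral form of the remainder.

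Second, I take $u^{(0)}$ to be the solution of \eqref{equation}-\eqref{ini} supplied by Theorem \ref{theorem 1.14.10.12}, and define $u^{(j)}$ for $j=1,\ldots,k$ inductively as the unique solution of the linear SPDE
$$du^{(j)}_t=\bigl(\mathcal{L}_t u^{(j)}_t+F^{(j)}_t\bigr)dt+\bigl(\mathcal{M}^r_t u^{(j)}_t+G^{(j),r}_t\bigr)dw^r_t,\qquad u^{(j)}_0=0,$$
with $F^{(j)}=\sum_{i=1}^{j}\binom{j}{i}i!\,L^{(i)}u^{(j-i)}$ and $G^{(j),r}=\sum_{i=1}^{j}\binom{j}{i}i!\,M^{(i),r}u^{(j-i)}$; these are exactly the combinations that arise by formally differentiating \eqref{scheme} $j$ times in $h$ at $h=0$. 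Viewed jointly, $(u^{(0)},\ldots,u^{(k)})$ solves the triangular system handled by Theorem \ref{theorem 5.28.1}, which provides each $u^{(j)}$ in a Sobolev class of sufficient order and supplies $L_p$ moment bounds that propagate the smoothness of $\psi,f,g$; the hypothesis $\mathfrak{m}>2k+3+d/2$ is calibrated to leave enough regularity after these inductive losses for the remainder analysis and for sup-in-$x$ bounds via the Sobolev embedding $H^s\hookrightarrow C_b$ when $s>d/2$.

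Third, defining $r^h_t(x):=h^{-(k+1)}[u^h_t(x)-\sum_{j=0}^k(h^j/j!)u^{(j)}_t(x)]$ on $\bG_h$ and subtracting equations, the matching is arranged so that the terms of orders $h^0,\ldots,h^k$ cancel exactly, and $r^h$ satisfies a discretized SPDE of the same form as \eqref{scheme},
$$dr^h_t=\bigl(L^h_t r^h_t+\mathfrak{F}^h_t\bigr)dt+\bigl(M^{h,r}_t r^h_t+\mathfrak{G}^{h,r}_t\bigr)dw^r_t,\qquad r^h_0=0,$$
whose free terms $\mathfrak{F}^h,\mathfrak{G}^{h,r}$ are finite combinations of $L^{(i)}u^{(j)}$, $M^{(i),r}u^{(j)}$ (with $i+j\geq k+1$, $i\leq k+1$, $j\leq k$) together with the Taylor-remainder terms $\tilde L^h u^{(j)}$, $\tilde M^{h,r}u^{(j)}$, and are bounded in the discrete Sobolev norms required by Theorem \ref{theorem estimate} uniformly in $h$. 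Applying that theorem produces $L_p$ bounds on $\sup_t|r^h_t|_{l_{h,2}}$ together with bounds on sufficiently many spatial differences of $r^h$, and the discrete embedding of Lemma \ref{lemma beagyazas} converts these into the sup-in-$x$ estimate \eqref{r}.

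Finally, when $\mathfrak{p}^\lambda=\mathfrak{q}^\lambda=0$ the operator $L^h$ consists only of products $\delta^h_\lambda\delta^h_\mu$, so both $L^h$ and $M^{h,r}$ are even in $h$, whence $L^{(i)}=M^{(i),r}=0$ for every odd $i$. A parity induction then gives $F^{(j)}=G^{(j),r}=0$ and thus $u^{(j)}\equiv 0$ for every odd $j$. For odd $k$ the leading term $h^k u^{(k)}/k!$ in \eqref{expansion} vanishes identically, which gains one order in $h$ in the Taylor remainder and hence one derivative of freedom in the data requirement, explaining the relaxed bound $\mathfrak{m}>2k+2+d/2$ and the validity of the expansion for both signs of $h$. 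Throughout this argument, the main analytical obstacle is Theorem \ref{theorem estimate} itself: without the smooth factorisation condition of \cite{G11}, controlling discrete spatial differences of solutions to degenerate stochastic finite difference schemes is the genuinely new step; granted that estimate, the present theorem reduces to an essentially algebraic matching procedure together with the system-SPDE existence result Theorem \ref{theorem 5.28.1}.
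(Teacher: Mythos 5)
Your overall strategy is exactly the paper's: define the coefficients $u^{(j)}$ through the triangular system of SPDEs obtained by formally differentiating the scheme in $h$ (this is Theorem \ref{theorem 5.28.1}), show the remainder solves a scheme of the same form with $O(h^{k+1})$ free terms, apply the key estimate of Theorem \ref{theorem estimate}, and pass to sup and $l_{h,2}$ norms via the Sobolev embeddings; your parity argument for $\frak p^{\lambda}=\frak q^{\lambda}=0$ and odd $k$ is also the paper's.

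There is, however, one step that fails as written: the uniform Taylor expansion $L^h=\cL+\sum_{i=1}^{k+1}h^iL^{(i)}+h^{k+2}\tilde L^h$ applied to \emph{every} $u^{(j)}$. The operator $L^{(i)}$ has differential order $i+2$ and the remainder $\tilde L^h$ has order $k+4$, while Theorem \ref{theorem 5.28.1} delivers $u^{(j)}$ only in $H^{\frak m-2j}$. Hence the terms $\tilde L^h u^{(j)}$, and the terms $L^{(i)}u^{(j)}$ with $i+j\geq k+1$ and $i$ large, require up to $l+3k+4$ derivatives of $u^{(k)}$, whereas only $\frak m-2k\approx l+3$ are available under $\frak m>2k+3+d/2$; so your free terms $\mathfrak F^h,\mathfrak G^{h,r}$ do not lie in the Sobolev classes that Theorem \ref{theorem estimate} needs, for any $k\geq0$. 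The cure --- and the precise reason the hypothesis is $\frak m>2k+3+d/2$ --- is to expand $L^h$ and $M^{h,r}$ acting on $u^{(j)}$ only to order $k-j$, so that the remainder operator $\cO^{h(k-j)}$ has order $k-j+3$ and satisfies $|\cO^{h(k-j)}u^{(j)}|_l\leq N|h|^{k-j+1}|u^{(j)}|_{l+k-j+3}$, with $l+k-j+3\leq\frak m-2j$ exactly when $\frak m\geq l+k+j+3$, the worst case being $j=k$. This degree matching is carried out in the paper's Corollary \ref{corollary 5.28.01} and Lemma \ref{lemma 5.28.5}; with that correction (and the corresponding truncation for $M^{h,r}$) your argument closes and coincides with the paper's proof.
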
 

This is our main theorem which is proved 
in Section \ref{section proof}. 
It clearly implies the following result. 

\begin{corollary}                      \label{corollary 1.23.10.12}
Let $p>0$. If Assumptions 
\ref{assumption 1.12.10.12} through  
\ref{assumption 3.7.5.12} 
hold with 
$
\frak m>3+({d}/{2}),  
$
then for all $h>0$
\begin{equation*}                          
E\sup_{[0,T]\times\bG_h}|u-u^h|^p
+E\sup_{t\in[0,T]}|u_t-u_t^h|^p_{l_{h,2}} 
\leq Nh^{p}(E|\psi|^p_{\frak m}+E\cK^p_{\frak m}(T)). 
\end{equation*}
If Assumptions 
\ref{assumption 1.12.10.12} through  
\ref{assumption 3.7.5.12} 
hold with 
$
\frak m>4+({d}/{2}),  
$
and  $\frak p^{\lambda}=\frak q^{\lambda}=0$ 
for all $\lambda\in\Lambda_0$,  
then for all $h>0$
\begin{equation*}                          
E\sup_{[0,T]\times\bG_h}|u-u^h|^p
+E\sup_{t\in[0,T]}|u_t-u_t^h|^p_{l_{h,2}} 
\leq Nh^{2p}(E|\psi|^p_{\frak m}+E\cK^p_{\frak m}(T)). 
\end{equation*}
In these estimates $N$ is a constant depending only on 
$K$, $T$, $p$, $d$ and $|\Lambda_0|$. 
\end{corollary}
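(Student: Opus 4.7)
The plan is to derive both estimates as immediate specializations of Theorem \ref{theorem main}, choosing the value of $k$ in the expansion \eqref{expansion} that makes the remainder carry the desired power of $h$. Since $u^{(0)}=u$ by construction, the key observation is that whenever all intermediate coefficients $u^{(1)},\dots,u^{(k)}$ either are absent or vanish, the expansion \eqref{expansion} reduces to $u^h_t(x)-u_t(x)=h^{k+1}r^h_t(x)$, and the bound \eqref{r} on the remainder transfers directly to a bound on $u-u^h$.

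For the first estimate I would apply Theorem \ref{theorem main} with $k=0$. The smoothness hypothesis $\frak m>3+d/2$ is exactly $\frak m>2k+3+d/2$ with $k=0$, so the theorem produces a continuous random field $r^h$ with $u^h_t(x)=u_t(x)+h\,r^h_t(x)$ for $t\in[0,T]$ and $x\in\bG_h$, and with $r^h$ satisfying \eqref{r}. Rearranging and raising to the $p$-th power yields
\begin{equation*}
E\sup_{[0,T]\times\bG_h}|u-u^h|^p+E\sup_{t\in[0,T]}|u_t-u_t^h|^p_{l_{h,2}}
=h^p\Bigl(E\sup_{[0,T]\times\bG_h}|r^h|^p+E\sup_{t\in[0,T]}|r^h_t|^p_{l_{h,2}}\Bigr),
\end{equation*}
and the right-hand side is dominated by $Nh^p(E|\psi|^p_{\frak m}+E\cK^p_{\frak m}(T))$ by \eqref{r}.

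For the second estimate I would invoke the odd-$k$ part of Theorem \ref{theorem main} with $k=1$. Under the extra hypothesis $\frak p^{\lambda}=\frak q^{\lambda}=0$ for all $\lambda\in\Lambda_0$, the theorem asserts that $u^{(l)}=0$ for every odd $l\leq k$, and in particular $u^{(1)}\equiv 0$. The required smoothness reduces to $\frak m>2k+2+d/2=4+d/2$, which is precisely the hypothesis. The expansion \eqref{expansion} therefore collapses to $u^h_t(x)=u_t(x)+h^2 r^h_t(x)$, and the same argument as above, now with the factor $h^{2p}$ pulled out, yields the second inequality via \eqref{r}.

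I do not anticipate any genuine obstacle here: the entire content of the corollary lies in Theorem \ref{theorem main}, and the proof is a matter of matching indices. The only point that deserves a line of verification is that the statement of the theorem does give a remainder controlled by $|\psi|_{\frak m}$ and $\cK_{\frak m}(T)$ (rather than by the larger-index norms appearing in intermediate calculations), so that the constant $N$ depends only on $K$, $T$, $p$, $d$, and $|\Lambda_0|$ as claimed; this is explicit in the conclusion of Theorem \ref{theorem main}, so no additional work is required.
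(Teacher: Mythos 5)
Your proposal is correct and is exactly the argument the paper intends: the paper gives no separate proof of the corollary, stating only that Theorem \ref{theorem main} ``clearly implies'' it, and your specializations to $k=0$ (using $\frak m>3+d/2$) and to the odd case $k=1$ with $u^{(1)}=0$ (using $\frak m>4+d/2$ and $\frak p^{\lambda}=\frak q^{\lambda}=0$), followed by the pointwise identity $|u-u^h|=h^{k+1}|r^h|$ and the bound \eqref{r}, are precisely the intended index-matching.
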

\begin{remark}                                                                     \label{remark sharp}
By an example given in \cite{DK} one can see that the rate of convergence 
stated in the above corollary in each of the cases is sharp. 
(See Remark 2.21 in \cite{DK}). 
\end{remark}
\begin{remark}                                                                      \label{remark monotone}If $M^{h,r}=0$ and $g^r=0$ for all $r\geq1$ 
in \eqref{scheme} then we get finite difference schemes (in the spatial variable)  
for parabolic (possibly degenerate) PDEs. Since these schemes are not necessarily 
monotone, Theorem \ref{theorem main} and 
Corollary \ref{corollary 1.23.10.12} are new results also in the case of deterministic 
PDEs. They generalise the corresponding results, Theorem 2.3 in 
\cite{GK3} and Theorems 2.16 and 2.18 in \cite{DK} on monotone schemes to 
a class of schemes which contains also non monotone finite difference schemes. 
\end{remark}
 Now we formulate some implications 
of Theorem \ref{theorem main}  
on {\it Richardson extrapolation}.   
We set  
$$
(c_0,c_1,...,c_k)=(1,1,...,1)V^{-1}, 
\quad 
(\tilde c_0,\tilde c_1,...,\tilde c_{\tilde k})
=(1,1,...,1)\tilde V^{-1}, 
$$
where $\tilde k=(k-1)/2$ for odd integer $k\geq0$, $V^{-1}$ is  
the inverse of the $(k+1)\times(k+1)$ Vandermonde matrix $V=(V^{ij})$ 
given 
by $V^{ij}=2^{-(i-1)(j-1)}$, and 
$\tilde V^{-1}$ is the inverse of the $\tilde k\times\tilde k$ Vandermonde 
matrix $\tilde V$ given by $\tilde V^{ij}=4^{-(i-1)(j-1)}$. 
Define 
$$
v^h=\sum_{i=0}^{k}c_iu^{h/2^i}
$$
when Assumptions \ref{assumption 1.12.10.12} through  
\ref{assumption 3.7.5.12} hold with $\frak m>2k+3+d/2$ 
for some integer $k\geq0$, 
and define also 
$$
\tilde v^h=\sum_{i=0}^{\tilde k}\tilde c_iu^{h/2^i}
$$
when $k\geq1$ is odd,  
$\frak p^{\lambda}=\frak q^{\lambda}=0$ for all $\lambda\in\Lambda_0$, 
and Assumptions \ref{assumption 1.12.10.12} through  
\ref{assumption 3.7.5.12} hold with $\frak m>2k+2+d/2$. 

\begin{theorem}                                 \label{theorem 2.15.10.12}
 Let $p>0$. Let Assumptions \ref{assumption 1.12.10.12} through  
\ref{assumption 3.7.5.12} hold with $\frak m>2k+3+d/2$ for 
some integer $k\geq0$. Then for all $h>0$
$$
E\sup_{t\in[0,T]}\sup_{x\in\bG_h}|u_t(x)-v_t^h(x)|^p
+E\sup_{t\in[0,T]}|u_t-v_t^h|^p_{l_{h,2}} 
$$
\begin{equation}                          \label{4.15.10.12}
\leq Nh^{p(k+1)}(E|\psi|^p_{\frak m}+E\cK^p_{\frak m}(T))
\end{equation}
holds  with a constant $N=N(K,T,k,p,|\Lambda|)$. 
Let $k\geq1$ be an odd number an let 
Assumptions \ref{assumption 1.12.10.12} through  
\ref{assumption 3.7.5.12} hold with $\frak m>2k+2+d/2$ 
Then estimate \eqref{4.15.10.12} holds for all $h>0$ with $\tilde v^h$ 
in place of $v^h$. 
\end{theorem}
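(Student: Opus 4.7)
\medskip
The plan is to apply Theorem~\ref{theorem main} to each of the $k+1$ approximations $u^{h/2^i}$ for $i = 0, 1, \ldots, k$ and to exploit the Vandermonde identity built into the weights $c_i$ so as to cancel every intermediate power of $h$ in the resulting linear combination. What makes this feasible is that Theorem~\ref{theorem main} produces the expansion coefficients $u^{(j)}$ as random fields \emph{independent of $h$}, so the same $u^{(j)}$ appear simultaneously in the expansions of all the $u^{h/2^i}$ and can be eliminated jointly; thereafter the argument is Vandermonde bookkeeping plus a short grid comparison.

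Under the hypothesis $\frak m > 2k + 3 + d/2$, Theorem~\ref{theorem main} yields continuous random fields $u^{(0)} = u, u^{(1)}, \ldots, u^{(k)}$ and continuous remainders $r^{h/2^i}$, each satisfying \eqref{r} with a constant independent of $i$, such that \eqref{expansion} holds with $h$ replaced by $h/2^i$. Substituting these expansions into $v^h_t(x) = \sum_{i=0}^k c_i u^{h/2^i}_t(x)$ and interchanging the two finite sums gives
\begin{equation*}
v^h_t(x) = \sum_{j=0}^{k} \frac{h^j}{j!}\Bigl(\sum_{i=0}^{k} c_i 2^{-ij}\Bigr) u^{(j)}_t(x) + h^{k+1} R^h_t(x), \qquad R^h_t(x) := \sum_{i=0}^{k} c_i 2^{-i(k+1)} r^{h/2^i}_t(x).
\end{equation*}
The defining Vandermonde identity for the $c_i$'s amounts to the system $\sum_{i=0}^k c_i 2^{-ij} = \delta_{j,0}$ for $j = 0, 1, \ldots, k$, uniquely solvable thanks to the invertibility of $V$. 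All intermediate terms therefore vanish and $u_t(x) - v^h_t(x) = -h^{k+1} R^h_t(x)$.

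To convert this pointwise identity into the advertised moment estimates, one controls $R^h$ in both relevant norms. Since $\bG_h \subset \bG_{h/2^i}$, one has $\sup_{x \in \bG_h}|r^{h/2^i}_t(x)| \leq \sup_{x \in \bG_{h/2^i}}|r^{h/2^i}_t(x)|$; and because $\bG_h$ is the $2^i$-fold sparsification of $\bG_{h/2^i}$ with grid weights related by $h^d = 2^{id}(h/2^i)^d$, a one-line counting argument produces $|r^{h/2^i}_t|_{l_{h,2}} \leq 2^{id/2}|r^{h/2^i}_t|_{l_{h/2^i,2}}$. Applying the triangle inequality in $i$ to $R^h$, raising to the $p$-th power, taking expectation, and invoking \eqref{r} for each $r^{h/2^i}$ then yields \eqref{4.15.10.12}, with the finitely many $|c_i|$- and $i$-dependent factors absorbed into the constant $N$.

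The odd case ($k$ odd, $\frak p^\lambda = \frak q^\lambda = 0$, $\frak m > 2k + 2 + d/2$) is treated identically, with one simplification: Theorem~\ref{theorem main} now gives $u^{(l)} = 0$ for every odd $l \leq k$, so the only intermediate powers of $h$ to be annihilated are $h^2, h^4, \ldots, h^{k-1}$. Together with $\sum_i \tilde c_i = 1$ this amounts to $\tilde k + 1 = (k+1)/2$ linear conditions, comprising a Vandermonde system in the squared points $(2^{-(i-1)})^2 = 4^{-(i-1)}$; this is precisely the system encoded in $\tilde V$. The $h^{k+1}$-remainder is estimated verbatim. The only genuine obstacle in the entire argument is Theorem~\ref{theorem main} itself; once it is granted, the proof of Theorem~\ref{theorem 2.15.10.12} reduces to Vandermonde algebra and the above grid comparison.
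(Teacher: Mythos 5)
Your proof is correct and follows essentially the same route as the paper's: substitute the expansion \eqref{expansion} from Theorem \ref{theorem main} into each $u^{h/2^i}$, cancel the intermediate powers of $h$ via the Vandermonde identity satisfied by the $c_i$ (your form $\sum_i c_i2^{-ij}=\delta_{j,0}$ is the intended one --- the paper's $(1,1,\dots,1)V^{-1}$ should read $(1,0,\dots,0)V^{-1}$), and estimate the remainder $R^{(h)}=\sum_i c_i2^{-i(k+1)}r^{h/2^i}$ by \eqref{r}. The paper compresses all of this into three lines, so your extra care with the grid comparison $\bG_h\subset\bG_{h/2^i}$ and $|\cdot|_{l_{h,2}}\le 2^{id/2}|\cdot|_{l_{h/2^i,2}}$ simply fills in a step the paper leaves implicit.
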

 
\begin{proof}
By definition of the coefficients 
$c_i$ and $\tilde c_i$, from Theorem \ref{theorem main} 
we have that for each $h$ almost surely 
$$
u_t(x)-v_t^h(x)
=h^{k+1}R^{(h)}_t(x), 
\quad 
u_t(x)-\tilde v_t^h(x)
=h^{k+1}\tilde R^{(h)}_t(x)
$$
for all $t\in[0,T]$ and $x\in\bG_h$, where 
$R^{(h)}=\sum_{j=0}^{k}c_j2^{-j}r^{h/2^j}$ and 
$\tilde R^{(h)}=\sum_{j=0}^{\tilde k}\tilde c_j2^{-j}r^{h/2^j}$. 
Hence the theorem follows from Theorem \ref{theorem main} 
by virtue of estimate \eqref{r}. 
\end{proof}

\begin{example}
Assume that we have 
$d=2$, $\frak m=10$ and $\gp^{\lambda}=\gq^{\lambda}=0$ 
for every $\lambda\in\Lambda_0$. 
 Then
$$
 \tilde v^h:=\tfrac{4}{3}u^{h/2}-\tfrac{1}{3}u^h
$$ 
satisfies
$$
E\sup_{t\leq T}\sup_{x\in\bG_h}|u^{}_{t}(x)
- \tilde u^h_{t}( x)|^2
\leq N h^{ 8}(E|\psi|^2_{\frak m}+E\cK^2_{\frak m}(T)).
$$  
\end{example}

\begin{remark}
If 
in addition to the conditions of Theorem \ref{theorem 2.15.10.12} 
$\psi\geq0$, $f_t\geq0$ and $g_t^{r}=0$ are 
also satisfied for 
$t\in[0,T]$, $x\in\bR^d$ and $r\geq1$, then $u$, the solution of  
\eqref{equation}-\eqref{ini}, is nonnegative. Such situation arises, 
for example in the case of the Zakai equation in nonlinear 
filtering, where $f=0$, $g=0$ and $\psi$ is the conditional 
density of the initial value of the signal, given the initial value 
of the observation.   Notice, however 
that even in such cases, in general, $v^h$ and $\tilde v^h$ 
take negative values.  If we want our approximations 
to be also non-negative then we may 
take, for example, $(v^h)^+$ and $(\tilde v^h)^+$ 
in place of $v^h$ and $\tilde v^h$, respectively. 
Since $|z-w^+|\leq |z-w|$ for any $z\in[0,\infty)$ and 
$w\in\bR$, for $u_t(x)\geq0$ we have 
$$
|u_t(x)-(v_t^h(x))^+|\leq |u_t(x)-v_t^h(x)|, 
\quad
|u_t(x)-(\tilde v_t^h(x))^+|\leq |u_t(x)-\tilde v_t^h(x)|. 
$$
Consequently, if  $u$ is a nonnegative 
random field, then 
Theorem \ref{theorem 2.15.10.12} holds also with 
$(v^h)^+$ and $(\tilde v^h)^+$ in place of 
$v^h$ and $\tilde v^h$, respectively. 
\end{remark} 

Theorem \ref{theorem 2.15.10.12} implies 
the following results on almost sure 
rate of convergence.

\begin{theorem}                                     \label{theorem almost sure}
Let $(h_n)_{n=1}^{\infty}$ be 
a nonnegative sequence from $l_q$ for some $q\geq1$. 
Let Assumptions \ref{assumption 1.12.10.12} through  
\ref{assumption 3.7.5.12} hold with $\frak m>2k+1+d/2$ for 
some integer $k\geq0$. 
Then for each $\varepsilon>0$ 
there is a finite random variable $\xi_{\varepsilon}$ 
such that almost surely 
\begin{equation}                          \label{6.15.10.12}
\sup_{t\in[0,T]}\sup_{x\in\bG_{h}}|u_t(x)-v_t^{h}(x)|
+\sup_{t\in[0,T]}|u_t-v_t^{h}|_{l_{h,2}} 
\leq \xi_{\varepsilon}h^{k+1-\varepsilon} 
\end{equation}
holds for $h=h_n$, for integers $n\geq1$. 
Let $k\geq1$ be an odd number such that 
Assumptions \ref{assumption 1.12.10.12} through  
\ref{assumption 3.7.5.12} hold with $\frak m>2k-1+d/2$.  
Then for every $\varepsilon>0$ there is a finite random variable 
$\xi_{\varepsilon}$ such that almost surely 
\eqref{6.15.10.12} holds with $\tilde v^{h}$ 
in place of $v^{h}$ for $h=h_n$ for all $n\geq1$.  
\end{theorem}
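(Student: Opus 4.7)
The approach is a routine Borel--Cantelli bootstrapping of the $L_p$-rate established in Theorem~\ref{theorem 2.15.10.12}. Fix $\varepsilon>0$, set
\[
Z(h):=\sup_{t\in[0,T]}\sup_{x\in\bG_h}|u_t(x)-v_t^{h}(x)|+\sup_{t\in[0,T]}|u_t-v_t^{h}|_{l_{h,2}},
\]
and choose an integer $p$ with $p\varepsilon\geq q$. Since $(h_n)\in l_q$ we have $h_n\to 0$, and hence $\sum_n h_n^{p\varepsilon}\leq(\sup_n h_n)^{p\varepsilon-q}\sum_n h_n^{q}<\infty$.

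Because the $L_p$ bound in Theorem~\ref{theorem 2.15.10.12} carries the factor $E|\psi|^p_{\frak m}+E\cK_{\frak m}^p(T)$, which under Assumption~\ref{assumption 2.15.9.12} need only be finite almost surely, I first localize. Put $A_M:=\{|\psi|_{\frak m}\leq M\}\in\cF_0$, let $\tau_M:=T\wedge\inf\{t\geq 0:\int_0^{t}(|f_s|^2_{\frak m}+|g_s|^2_{\frak m+1})\,ds\geq M^2\}$, and set $B_M:=A_M\cap\{\tau_M=T\}$. Theorem~\ref{theorem 2.15.10.12} applied to the truncated data $(\mathbf{1}_{A_M}\psi,\mathbf{1}_{[0,\tau_M]}f,\mathbf{1}_{[0,\tau_M]}g)$ (for which the relevant moments are bounded by a polynomial in $M$) then gives
\[
E[Z(h)^p;B_M]\leq N_M h^{p(k+1)},
\]
where I use that on $B_M$ the solutions of \eqref{equation}--\eqref{ini} and \eqref{scheme}--\eqref{schemeini} for the original and truncated data coincide by uniqueness (Theorem~\ref{theorem 1.14.10.12} and Remark~\ref{remark 4.13.10.14}).

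Markov's inequality now yields $P(Z(h_n)>h_n^{k+1-\varepsilon};B_M)\leq N_M h_n^{p\varepsilon}$, and summability of $\sum_n h_n^{p\varepsilon}$ together with Borel--Cantelli implies that almost surely on $B_M$ the event $\{Z(h_n)>h_n^{k+1-\varepsilon}\}$ occurs only finitely often; in particular $\sup_n Z(h_n)/h_n^{k+1-\varepsilon}<\infty$ a.s.\ on $B_M$. Since $P(\bigcup_M B_M)=1$, the random variable $\xi_\varepsilon:=\sup_n Z(h_n)/h_n^{k+1-\varepsilon}$ is almost surely finite and witnesses \eqref{6.15.10.12}. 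The claim for $\tilde v^h$ under the odd-$k$ hypothesis follows identically from the second part of Theorem~\ref{theorem 2.15.10.12}.

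\textbf{Main obstacle.} The only delicate step is the localization: one must verify that after the cut-off the data $(\mathbf{1}_{A_M}\psi,\mathbf{1}_{[0,\tau_M]}f,\mathbf{1}_{[0,\tau_M]}g)$ retains $\cF_0$-measurability of $\psi$, predictability of $f,g$, and the full list of smoothness bounds required by Theorem~\ref{theorem 2.15.10.12}, and that the resulting constant depends on $M$ but not on $h$. Beyond that, the Markov--Chebyshev and Borel--Cantelli portions, together with the smoothness bookkeeping needed to invoke Theorem~\ref{theorem 2.15.10.12} at the appropriate index, are entirely routine.
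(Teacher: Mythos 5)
Your proposal is correct and follows essentially the same route as the paper: localize via $\{|\psi|_{\frak m}\leq M\}$ and the stopping times $\tau_M$ so that the moment bound of Theorem \ref{theorem 2.15.10.12} applies with a finite constant, then choose $p$ with $p\varepsilon\geq q$ and exploit $\sum_n h_n^{p\varepsilon}<\infty$. The only cosmetic difference is that the paper obtains $\xi_{\varepsilon}$ directly as $\bigl(\sum_n \zeta_n^p\bigr)^{1/p}$ with $\zeta_n=h_n^{-k-1+\varepsilon}\eta_{h_n}$, whereas you pass through Chebyshev and Borel--Cantelli; both rest on exactly the same summability.
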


\begin{proof}
We prove only the statement concerning $v^h$, since the assertion for 
$\tilde v^h$ can be proved in the same way.  
Set $\Omega_{r}=\{\omega\in\Omega: |\psi(\omega)|_{\frak m}|\leq r\}$ 
for integers $r\geq1$. Then $\Omega_r\in\cF_0$ and 
$\cup_{r=1}^{\infty}\Omega_r$ has full probability. If for each 
$r$  we have 
\eqref{6.15.10.12}  on  $\Omega_r$, with some almost surely 
finite random variable $\xi_{\varepsilon r}$ instead of 
$\xi_{\varepsilon}$, then clearly 
we have \eqref{6.15.10.12} almost surely 
with some finite random variable $\xi_{\varepsilon}$. 
Thus we may additionally assume that $|\psi|_{\frak m}$ 
is bounded by a constant. Define 
$$
\tau_r=\inf\{t\geq0:\cK_{\frak m}(t)\geq r\}\quad 
\text{for integers $r\geq1$}. 
$$
Then $\tau_r$ is a stopping time for each $r$, and due to 
Assumption \ref{assumption 2.15.9.12}, 
$\tau_r\to\infty$ as $r\to\infty$. 
Thus if for each $r$ we have a 
finite random variable $\xi_{\varepsilon r}$ 
such that \eqref{6.15.10.12} holds almost surely 
with $u_{t\wedge\tau_r}$, $v_{t\wedge\tau_r}^{h}$ and 
$\xi_{\varepsilon r}$ in place of 
$u_{t}$, $v_{t}^{h}$ and 
$\xi_{\varepsilon}$, respectively for every $h=h_n$, 
then there is a finite random variable $\xi_{\varepsilon}$ 
such that  \eqref{6.15.10.12} holds almost surely for 
all $h=h_n$, $n\geq1$.  
Therefore in addition to the assumptions of the theorem 
we may assume that $|\psi|_{\frak m}+\cK_{\frak m}(T)$ 
is bounded by a constant, say $c$. 
Let $\eta_h$ denote the left-hand side of inequality 
\eqref{6.15.10.12}, and set 
$\zeta_n=h_n^{-k-1+\varepsilon}\eta_{h_n}$.  
Then under the additional assumption that 
$|\Psi|_{\frak m}+\cK_{\frak m}(T)\leq c$,  
by Theorem \ref{theorem 2.15.10.12} we obtain 
$$
E\zeta_n^p\leq Nc^ph^{\varepsilon p}_n
\quad
\text{for all $n\geq1$ and $p>0$},  
$$
where the constants $N$ and $c$ are independent of $n$. 
Taking here $p$ so large that $p\varepsilon\geq q$, 
we get  
$$
E\sum_{n=1}^{\infty}\zeta_n^p
=\sum_{n=1}^{\infty}E\zeta_n^p
\leq Nc^p\sum_{n=1}^{\infty}h^{\varepsilon p}_n<\infty. 
$$
Hence for 
$$
\xi_{\varepsilon}:=\left(\sum_{n=1}^{\infty}\zeta_n^p\right)^{1/p}
$$
we obtain that almost surely $\xi_{\varepsilon}<\infty$ and 
$\eta_{h_n}\leq\xi_{\varepsilon}h^{k+1-\varepsilon}_n$ for 
all $n$, which completes the proof of the theorem. 
\end{proof}

\begin{example}
Consider the degenerate parabolic SPDE 
$$
du_{t}=2D^{2}u_{t}\,dt+2Du_{t}\,dw_{t}\,\quad  
t\in(0,1], \,x\in\bR, 
$$
with initial condition $u_{0}(x)=\cos x$, 
and approximate it by the finite difference equation 
$$
du^{h}_{t}(x)
=\frac{u^{h}_{t}(x+2h)-2u^{h}_{t}(x)+u^{h}_{t}(x-2h)}
{2h^{2}}\,dt+\frac{u^{h} _{t}(x+h)-u^{h}_{t}(x-h)}{h}\,dw_{t}. 
$$
The unique bounded
solution of the SPDE problem  is 
$$
u_{t}(x)=\cos (x+2w_{t}),  
$$ 
and the unique bounded solution to the 
finite difference equation 
(with initial condition $u^h_{0}(x)=\cos x$) 
is
$$
u^{h}_{t}(x)=\cos(x+2\phi_{h}w_{t}), 
$$
where 
$
\phi_{h}=\sin h/h.
$
For $t=1$, $h=0 .1$, and $w_{t}=1$ we have
$$
u_{1}(0)\approx-0.4161468365,
$$
$$u_{1}^{h}(0)\approx-0.4131150562
,\quad 
u_{1}^{h/2}(0)\approx-0.415389039,
$$
$$
 \tilde u^h_{1}(0)=\tfrac{4}{3}u^{h/2}_{1}(0)
-\tfrac{1}{3}u^h_{1}(0)=\approx0.4161470333 	.
$$ 
Such level of
accuracy by $u^{\tilde h}_{1}(0)$ is achieved  with 
$\tilde h=0.0008$,
which is more than 60 times smaller than $h/2$. 

Note that since $\cos x$  
is not square integrable over $\bR$, the above example 
does not  fit into our setting, but  using 
suitable Sobolev spaces we can extend our setting 
so that this example can be included. 
\end{example}

\mysection{Preliminaries}                   \label{section 2.2.5.12}

We introduce some notation. 
For $\lambda\in\Lambda_1\cup\{-\Lambda_1\}$  
and 
for $h\neq0$ we 
use 
$T_{h,\lambda}$ and $T^h_{\lambda}$ for the operators 
defined by  
$$
T_{h,\lambda}\varphi(x)=\varphi(x+h\lambda),  
\quad 
T^h_{\lambda}=\frac{1}{2}(T_{h,\lambda}+T_{h,-\lambda}) 
$$
for functions $\varphi$ given on $\bR^d$, where $I$ 
denotes the identity operator.  
For the sake of uniformity of notation 
we take a set of unit vectors, 
$\Lambda_2=\{\ell_1,....,\ell_d\}$,  
which is disjoint from $\Lambda_0\cup\{-\Lambda_0\}$,   
and use the notation 
$$
\delta_{h,l_i}=\delta^h_{\ell_i}=D_i=\frac{\partial}{\partial x^i}
$$
for the partial derivative in the direction of 
the $i$-th basis vector $e_i$ 
for $i=1,...,d$ and $h\neq0$. 
 We use also the notation $I_{\lambda}^h$ 
for $T^h_{\lambda}$ when $\lambda\in\Lambda_1$ and 
for the identity when $\lambda\in\Lambda_2$. 
Set $T_{h,\lambda}=I$ for 
$\lambda\in\Lambda_2$. It is easy to see that 
\begin{align}                                                        
\delta_{h,\lambda}(uv)=&v\delta_{h,\lambda}u
+(T_{h,\lambda}u)\delta_{h,\lambda}v                      \label{1.3.5.12} \\
=&
v\delta_{h,\lambda}u
+u\delta_{h,\lambda}v
+h_{\lambda}(\delta_{h,\lambda}u)(\delta_{h,\lambda}v)     \label{2.3.5.12} 
\end{align} 
for all $h\neq0$ and 
$$
\lambda\in\Lambda:=\Lambda_0\cup\Lambda_2,  
$$
where $h_{\lambda}=h$ if $\lambda\in\Lambda_0$ 
and $h_{\lambda}=0$ if $\lambda\in\Lambda_2$. 
Hence we get  
\begin{equation}                                          
                                       \label{8.7.5.12}
\delta_{\lambda}^h(uv)
=(\delta_{\lambda}^hu) I_{\lambda}^hv+
(I_{\lambda}^h u)\delta_{\lambda}^hv
\quad\text{for $\lambda\in\Lambda$.}
\end{equation}
Indeed, this is the classical Leibniz rule 
when $\lambda\in\Lambda_2$, and for 
$\lambda\in\Lambda_0$ 
by virtue of \eqref{1.3.5.12} 
we have 
\begin{align*}
\delta_{\lambda}^h(uv)
&=(\delta_{\lambda}^hu)v+\frac{1}{2}
\{(\delta_{h,\lambda}v)T_{h,\lambda}u
+(\delta_{-h,\lambda}v)T_{h,-\lambda}u \} \\       
&=(\delta_{\lambda}^hu)v+
(\delta^{h}_{\lambda}v)T_{-h,\lambda}u+
\frac{1}{2}
(T_{h,\lambda}u-T_{-h,\lambda}u)\delta_{h,\lambda}v\\
&=(\delta_{\lambda}^hu)v+
(\delta^{h}_{\lambda}v)T_{-h,\lambda}u+
(\delta^h_{\lambda}u)(T_{h,\lambda}v-v) \\
&=(\delta_{\lambda}^hu)T_{h,\lambda}v+
(\delta^{h}_{\lambda}v)T_{-h,\lambda}u.  
\end{align*}
Since $\delta_{\lambda}^h=\delta_{\lambda}^{-h}$, 
symmetrizing 
the last equality in $h$ we get \eqref{8.7.5.12}.

To extend this Leibniz rule 
to higher order finite differences and derivatives 
we introduce further notation. 

Let $\Lambda^n$ denote the set 
of sequences $\lambda_1....\lambda_n$ 
of length $n$  
of elements of $\Lambda$. 
We use the notation $|\alpha|:=n$ for the length 
of $\alpha\in\Lambda^n$. We introduce a `sequence' of `length zero', 
which we denote by $\epsilon$.  
It is considered a sub-sequence of 
any $\lambda\in\Lambda^n$ 
for each $n\geq1$.  For 
$\mu\in\Lambda^m$ and $\lambda\in\Lambda^n$, $m\leq n$ 
we write $\mu\leq\lambda$ if $\mu$ is a sub-sequence 
of $\lambda$, 
and $\lambda\setminus\mu$ denotes the sequence we obtain from 
$\lambda$ by removing $\mu$ from it. In particular, 
$\lambda\setminus\lambda=\epsilon$, $\lambda\setminus\epsilon=\lambda$.  
For $\lambda=\lambda_1....\lambda_n\in\Lambda^n$ we set  
$$
\delta_{\lambda}=\delta_{\lambda}^h
=\delta^h_{\lambda_1}\dots\delta^h_{\lambda_n}, 
\quad 
I_{\lambda}=I_{\lambda}^h
=I_{\lambda_1}^h...I_{\lambda_n}^h, 
$$
and for $\epsilon$ we set 
$$
\delta_{\varepsilon}=I_{\epsilon}=I. 
$$ 
Now a generalization of \eqref{1.3.5.12} reads as follows.

\begin{lemma}                                     \label{lemma 1.3.5.12}
Let $\lambda\in\Lambda^n$ for $n\geq1$. Then 
$$
\delta_{\lambda}(uv)
=\sum_{\mu\leq\lambda}(\delta_{\mu}
I_{\lambda\setminus\mu}u)
(\delta_{\lambda\setminus\mu}I_{\mu}v), 
$$
where the summation is taken over all sub-sequences of $\lambda$, 
including 
$\epsilon$. 
\end{lemma}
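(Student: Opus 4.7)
The plan is to prove Lemma \ref{lemma 1.3.5.12} by induction on $n=|\lambda|$. The base case $n=1$ is precisely equation \eqref{8.7.5.12} (and its trivial counterpart when $\lambda\in\Lambda_2$, where $I^h_\lambda$ is the identity and the statement reduces to the classical Leibniz rule).

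For the inductive step, I would write $\lambda=\nu\lambda_{n+1}$ with $\nu\in\Lambda^n$, so that $\delta_\lambda=\delta_\nu\delta_{\lambda_{n+1}}$. First apply \eqref{8.7.5.12} to expand
$$\delta_{\lambda_{n+1}}(uv)=(\delta_{\lambda_{n+1}}u)(I_{\lambda_{n+1}}v)+(I_{\lambda_{n+1}}u)(\delta_{\lambda_{n+1}}v),$$
and then apply the inductive hypothesis to $\delta_\nu$ of each summand. A crucial auxiliary observation, used throughout, is that the operators $\delta_\mu^h$ and $I_\mu^h$ are all linear combinations of translations in $\bR^d$, and hence commute pairwise; consequently they may be reordered freely inside any monomial, and in particular $\delta_\mu I_\beta\delta_\gamma=\delta_{\mu\gamma}I_\beta$ and $I_\mu I_\gamma=I_{\mu\gamma}$ whenever the letter-concatenations on the right are unambiguous.

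The combinatorial heart of the argument is that every sub-sequence $\tilde\mu$ of $\lambda=\nu\lambda_{n+1}$ is of exactly one of two types: either (a) $\tilde\mu\leq\nu$, in which case $\lambda\setminus\tilde\mu=(\nu\setminus\tilde\mu)\lambda_{n+1}$, or (b) $\tilde\mu=\mu\lambda_{n+1}$ with $\mu\leq\nu$, in which case $\lambda\setminus\tilde\mu=\nu\setminus\mu$. Using the commutation identities above, the inductive expansion of the summand $(I_{\lambda_{n+1}}u)(\delta_{\lambda_{n+1}}v)$ produces exactly the type (a) terms of the claimed formula, while the expansion of $(\delta_{\lambda_{n+1}}u)(I_{\lambda_{n+1}}v)$ produces exactly the type (b) terms. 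Summing the two yields the full sum over sub-sequences of $\lambda$.

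The main obstacle, as one sees, is not analytic but purely notational: one must carefully match the $2\cdot 2^n=2^{n+1}$ terms emerging from the two applications of the inductive hypothesis with the $2^{n+1}$ sub-sequences of $\lambda$, and verify that each matched pair is the same operator applied to $u$ and $v$. This is where the commutativity of the family $\{\delta_\mu^h,\,I_\mu^h\}$ is indispensable, since the inductive hypothesis delivers expressions like $\delta_\mu I_{\nu\setminus\mu}\delta_{\lambda_{n+1}}u$ that must be recognised as $\delta_{\mu\lambda_{n+1}}I_{\lambda\setminus(\mu\lambda_{n+1})}u$ in the target sum. Once this bookkeeping is carried out, the induction closes and the lemma follows.
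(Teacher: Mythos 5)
Your proof is correct and follows exactly the route the paper intends: the paper's own ``proof'' consists of the single sentence that the lemma follows by a straightforward induction on $n$, and your argument (base case $=$ \eqref{8.7.5.12}, inductive step splitting sub-sequences of $\nu\lambda_{n+1}$ according to whether they contain the last letter, using that the operators $\delta^h_\mu$, $I^h_\mu$ are commuting combinations of translations and derivatives) is precisely the omitted bookkeeping.
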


\begin{proof} 
The lemma can be proved by a straightforward  
induction on $n$, the length of the multi-sequence $\lambda$. 
\end{proof}
Notice that for $\lambda\in\Lambda$ 
\begin{equation}                                \label{1.8.5.12}
I_{\lambda}=I+\tfrac{h^2}{2}\Delta_{\lambda}^h=hP_{\lambda}+I, 
\end{equation}
where 
$$
\Delta^h_{\lambda}
:=
\frac{1}{h^2}(T_{h,\lambda}-2I+T_{h,-\lambda})
=\delta_{h,\lambda}\delta_{h,-\lambda}
=(\delta^{h/2}_{\lambda})^2
=\frac{1}{h}(\delta_{h,\lambda}-\delta_{-h,\lambda}), 
$$
$$
P_{\lambda}:=\tfrac{1}{2}(\delta_{h,\lambda}-\delta_{-h,\lambda})
$$ 
for $\lambda\in\Lambda_1$, 
and $\Delta_{\lambda}^h=P_{\lambda}=0$ for $\lambda\in\Lambda_2$ 
and $h\neq0$.   
Hence for $\alpha=\alpha_1\alpha_2...\alpha_m\in\Lambda^m$, 
$m\geq1$ we get 
\begin{equation}                                                      \label{2.8.5.12}                                                                 
I_{\alpha}
=I+{h^2}\mathcal O_{\alpha}=I+h\mathcal P_{\alpha}, 
\end{equation}
where 
$$
\mathcal O_{\alpha}=(\Delta_{\alpha_1}I_{\alpha_2\alpha_3....\alpha_m}
+\Delta_{\alpha_2}I_{\alpha_3....\alpha_m}+
...+\Delta_{\alpha_{m-1}}I_{\alpha_m}
+\Delta_{\alpha_{m}})/2,    
$$
$$
\mathcal P_{\alpha}=P_{\alpha_1}I_{\alpha_2\alpha_3....\alpha_m}
+P_{\alpha_2}I_{\alpha_3....\alpha_m}+
...+P_{\alpha_{m-1}}I_{\alpha_m}
+P_{\alpha_{m}}). 
$$
Set 
$$
R_{\lambda}=(T_{h,\lambda}-T_{h,-\lambda})/2
\quad
\text{for $\lambda\in\Lambda$,} 
$$
and notice that for any functions $a$ and $u$ 
on $\bR^d$ we have 
\begin{align}
I_{\mu}(au)=&aI_{\mu}u                               
+h(P_{\mu}a)I_{\mu}u+h(\delta_{\mu}a)(R_{\mu}u)    \label{3.18.9.12}\\
=&(I_{\mu}a)(I_{\mu}u)+(R_{\mu}a)(R_{\mu}u)       \label{4.18.9.12}
\end{align}
for all $\mu\in\Lambda$ and $h\neq0$. 
Indeed, if $\mu\in\Lambda_2$, then 
$P_{\mu}=R_{\mu}=0$, and 
these equalities hold by definition. 
If $\mu\in\Lambda_0$ then 
\begin{align*}
I_{\mu}(au)
=&aI_{\mu}u
+\frac{h}{2}
\{(\delta_{h,\mu}a)T_{h,\mu}u+
(\delta_{h,-\mu}a)T_{h,-\mu}u\}\\
=&aI_{\mu}u+h(P_{\mu}a)(T_{h,-\mu}u)+(R_{\mu}a)(R_{\mu}u), 
\end{align*}
and symmetryzing the right-hand side of the last 
equality in $\mu$ and $-\mu$,  
we obtain \eqref{3.18.9.12}. Hence we get 
equality \eqref{4.18.9.12} by using 
\eqref{1.8.5.12}. 

We will often make use of the fact that 
for $\lambda\in\Lambda^k$ 
$$
|\delta_{\lambda}v|_0\leq N|v|_k, 
\quad\text{for $v\in H^k$}, 
$$
where $N$ is a constant depending 
only on $d$, $k$ and $|\Lambda|$. 

More precisely, the following lemma holds. 
\begin{lemma}                                      \label{lemma 1.5.10.12}
Let $k$ be a positive integer. 
Then for 
$\lambda=\lambda_1\lambda_2...\lambda_k\in\Lambda^k$ and $h\neq0$ 
$$
|\delta_{h,\lambda}v|_0\leq \Pi_{i=1}^k|\lambda_i||D^kv|_0, 
\quad
|\delta_{\lambda}^hv|_0\leq \Pi_{i=1}^k|\lambda_i||D^kv|_0
$$
for all $v\in H^k$. 
\end{lemma}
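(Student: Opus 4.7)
The plan is to reduce both estimates to the integral representation
$$\delta_{h,\mu}\varphi = \int_0^1 T_{sh,\mu}\partial_\mu\varphi\,ds, \qquad \delta^h_\mu\varphi = \tfrac{1}{2}\int_{-1}^{1}T_{sh,\mu}\partial_\mu\varphi\,ds \qquad (\mu\in\Lambda_0),$$
valid on $C_0^\infty$ by the fundamental theorem of calculus, together with $\delta_{h,\mu}=\delta^h_\mu=\partial_\mu$ when $\mu\in\Lambda_2$. First I would handle smooth $v\in C_0^\infty$: iterating the identity along $\lambda=\lambda_1\dots\lambda_k$ and using that translations commute with one another and with differentiation, one obtains
$$\delta_{h,\lambda}v(x)=\int_{[0,1]^k}T_{s_1h,\lambda_1}\cdots T_{s_kh,\lambda_k}\,\partial_{\lambda_1}\cdots\partial_{\lambda_k}v(x)\,ds_1\cdots ds_k,$$
and the analogous formula for $\delta^h_\lambda v$, integrated over $[-1,1]^k$ with a $2^{-k}$ normalisation. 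For indices $i$ with $\lambda_i\in\Lambda_2$ the $ds_i$-integration is trivial (the translation is the identity and the factor $1$ or $\tfrac12\cdot 2$ gives no contribution).

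Second, I would apply Minkowski's integral inequality in $L_2(\bR^d)$. Since translations act as isometries on $L_2$, the $L_2$-norm of $T_{s_1h,\lambda_1}\cdots T_{s_kh,\lambda_k}\partial_{\lambda_1}\cdots\partial_{\lambda_k}v$ equals $|\partial_{\lambda_1}\cdots\partial_{\lambda_k}v|_0$, so the representation yields
$$|\delta_{h,\lambda}v|_0\leq |\partial_{\lambda_1}\cdots\partial_{\lambda_k}v|_0,$$
and the same bound for $|\delta^h_\lambda v|_0$. Expanding $\partial_{\lambda_i}=\sum_j\lambda_i^j D_j$ and applying Cauchy--Schwarz at each factor gives $|\partial_{\lambda_1}\cdots\partial_{\lambda_k}v|_0\leq \prod_{i=1}^k|\lambda_i|\,|D^kv|_0$, the factor $|\lambda_i|=(\sum_j(\lambda_i^j)^2)^{1/2}$ arising from the Cauchy--Schwarz bound on the directional derivative.

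Third and finally, I would extend the inequalities from $C_0^\infty$ to all $v\in H^k$ by density, noting that both sides are continuous with respect to the $H^k$-norm: the right-hand side obviously, the left-hand side because the difference operators $\delta_{h,\lambda}$ and $\delta^h_\lambda$ are bounded linear operators on $L_2(\bR^d)$ for each fixed $h\neq 0$. No real obstacle is expected here; the only bookkeeping point is treating sequences $\lambda$ in which some $\lambda_i$ lie in $\Lambda_2$ (pure partial derivatives) and others in $\Lambda_0$ (genuine difference quotients), but the uniform integral representation above absorbs both cases transparently.
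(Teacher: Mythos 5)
Your proposal is correct and follows essentially the same route as the paper: the integral representation of the difference quotient via the fundamental theorem of calculus, iteration along the sequence $\lambda$, Minkowski's inequality combined with the translation invariance of the $L_2$-norm, the pointwise Cauchy--Schwarz bound $|\partial_{\lambda_1}\cdots\partial_{\lambda_k}v|\leq\prod_i|\lambda_i|\,|D^kv|$, and reduction to $C_0^\infty$ by density. The only cosmetic difference is that you treat $\delta^h_\lambda$ directly through its symmetric integral representation over $[-1,1]^k$, whereas the paper deduces the second inequality from the first by averaging; both are equally valid.
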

\begin{proof}
The second inequality clearly follows from the first one. 
It is sufficient to prove the first inequality for smooth functions $v$ 
with compact support. For $\lambda\in\Lambda$ let 
$\bar\lambda$ denote $\lambda$ if $\lambda\in\Lambda_0$, and let 
$\bar\lambda=e_i$ if $\lambda=l_i\in\Lambda_2$. Then 
$$
\delta_{h,\lambda}v(x)=\int_{0}^{1}
\partial_{\bar\lambda}v(x+h_{\lambda}\theta\lambda)\,d\theta 
$$
for every $x\in\bR^d$, 
where $\partial_{\bar\lambda}$ 
is the directional 
derivative along 
$\bar\lambda$, $h_{\lambda}=h$ 
for $\lambda\in\Lambda_0$ and $h_{\lambda}=0$ 
for $\lambda\in\Lambda_2$. 
Hence for $\lambda\in\Lambda^k$ and smooth $v$ we get 
by induction on $k$ that 
$$
\delta_{h,\lambda}v(x)
=\int_{[0,1]^k}
\partial_{\bar\lambda_1}...\partial_{\bar\lambda_k}
v(x+\theta_1h_{\lambda_1}\lambda_1
+...+\theta_kh_{\lambda_k}\lambda_k)
\,d\theta_1\,d\theta_2...d\theta_k
$$
for every $x\in\bR^d$, which by Minkowski's 
inequality and by the shift 
invariance of the Lebesgue measure yields 
$$
|\delta_{h,\lambda}v|_0
\leq |\partial_{\bar\lambda_1}...\partial_{\bar\lambda_k}v|_0. 
$$
We can finish the proof by noting that 
$$
|\partial_{\bar\lambda_1}...\partial_{\bar\lambda_k}v(x)|^2
\leq |D^kv(x)|^2\Pi_{i=1}^k|\lambda_i|^2.  
$$
\end{proof}
\begin{lemma}                                    \label{lemma 6.7.5.12}
Let $\fra$ be a bounded  
function on $\bR^d$. Assume that the first order 
partial derivatives of $\fra$ are functions, which 
together with $a$ are bounded in magnitude 
by a constant $K$. 
Then for all $h\neq0$ and $u\in H^1$ we have 
\begin{align}
(I_{\mu}u,\fra\delta_{\lambda}u)
=&-\frac{1}{2}((\delta_{\lambda}\fra)I_{\lambda}I_{\mu}u,u)
-\frac{1}{2}(I_{\mu}R_{\lambda}u,(P_{\lambda}\fra)u)         \nonumber\\
&-\frac{1}{2}(R_{\lambda}u,(P_{\mu}\fra)I_{\mu}u)                
-\frac{1}{2}(R_{\lambda}u,(\delta_{\mu}\fra)R_{\mu}u), \label{1.18.9.12}\\
|(I_{\mu}u,\fra\delta_{\lambda}u)|\leq& N|u|_0^2       \label{2.18.9.12}
\end{align}
for $\lambda,\mu\in\Lambda$, 
where $N$ is a constant depending only 
on $K$, $d$, $|\lambda|$ and $|\mu|$. 
\end{lemma}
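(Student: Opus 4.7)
The plan is to derive \eqref{1.18.9.12} by a discrete summation-by-parts argument exploiting the adjoint identities $\delta_\lambda^*=-\delta_\lambda$, $I_\mu^*=I_\mu$, $R_\lambda^*=-R_\lambda$ on $L^2(\bR^d)$ together with the fact that these shift-type operators commute with one another. First, I move $\delta_\lambda$ onto $\fra I_\mu u$ to write $(I_\mu u,\fra\delta_\lambda u) = -(\delta_\lambda(\fra I_\mu u),u)$, and expand via the discrete Leibniz rule \eqref{8.7.5.12} to obtain
\begin{equation*}
(I_\mu u,\fra\delta_\lambda u)
= -((\delta_\lambda\fra)\,I_\lambda I_\mu u,u)
- ((I_\lambda\fra)(\delta_\lambda I_\mu u),u).
\end{equation*}
The first summand, after the final division by $2$, is precisely the first term on the right-hand side of \eqref{1.18.9.12}.

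For the second summand I decompose $I_\lambda\fra = \fra + hP_\lambda\fra$ by \eqref{1.8.5.12}, commute $\delta_\lambda I_\mu = I_\mu\delta_\lambda$, and use the key relation $h\delta_\lambda u = R_\lambda u$. The $hP_\lambda\fra$ piece reduces, by self-adjointness of $I_\mu$, to $(I_\mu R_\lambda u,(P_\lambda\fra)u)$---the second term of \eqref{1.18.9.12}. For the $\fra$ piece, passing $I_\mu$ across gives $(\fra I_\mu\delta_\lambda u,u) = (\delta_\lambda u, I_\mu(\fra u))$; expanding $I_\mu(\fra u) = \fra I_\mu u + h(P_\mu\fra)I_\mu u + h(\delta_\mu\fra)R_\mu u$ via \eqref{3.18.9.12}, the first piece of this expansion pairs with $\delta_\lambda u$ to reproduce $(I_\mu u,\fra\delta_\lambda u)$ back on the right-hand side, while the other two pieces (after reusing $h\delta_\lambda u = R_\lambda u$) produce the remaining two terms of \eqref{1.18.9.12}. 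Thus $(I_\mu u,\fra\delta_\lambda u)$ appears on both sides with opposite signs, and absorbing it to the left then dividing by $2$ yields \eqref{1.18.9.12}.

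The estimate \eqref{2.18.9.12} then follows from \eqref{1.18.9.12} by Cauchy--Schwarz applied term-by-term. Each multiplier of the form $\delta_\nu\fra$ or $P_\nu\fra$ with $\nu\in\{\lambda,\mu\}$ is bounded in $L^\infty$ by a constant depending only on $K$ and $|\nu|$, using the integral representation $\delta_{h,\nu}\fra(x) = \int_0^1\partial_{\bar\nu}\fra(x+th\nu)\,dt$ (from the proof of Lemma \ref{lemma 1.5.10.12}) and its symmetric analogue for $P_\nu$; and the operators $I_\lambda, I_\mu, R_\lambda, R_\mu$ are averages or half-differences of translations, so all have $L^2\to L^2$ operator norm at most $1$. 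The main obstacle I anticipate is purely bookkeeping: the quantity $(I_\mu u,\fra\delta_\lambda u)$ re-enters the right-hand side through the identity $(\delta_\lambda u,\fra I_\mu u) = (I_\mu u,\fra\delta_\lambda u)$, and the factor $\tfrac12$ in \eqref{1.18.9.12} is exactly the residue of this self-absorption; a single sign slip in any of the adjoint relations, or in the conversion $h\delta_\lambda u = R_\lambda u$, will destroy the cancellation.
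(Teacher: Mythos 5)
Your proof is correct and essentially identical to the paper's: the same summation by parts using $\delta_\lambda^{\ast}=-\delta_\lambda$ and $I_\mu^{\ast}=I_\mu$, the Leibniz rule \eqref{8.7.5.12}, the decompositions $I_\lambda=I+hP_\lambda$ and \eqref{3.18.9.12}, the self-absorption of $(I_\mu u,\fra\delta_\lambda u)$ yielding the factor $\tfrac12$, and, for \eqref{2.18.9.12}, the $L^\infty$ bounds on $\delta_\nu\fra$, $P_\nu\fra$ combined with the $L_2$-contractivity of $I_\mu$, $R_\lambda$. The one point to watch (present in the paper's own proof as well) is that the conversion $h\delta^h_\lambda u=R_\lambda u$ is valid only for $\lambda\in\Lambda_0$, since $R_\lambda=0$ but $\delta^h_\lambda=D_i\neq0$ for $\lambda\in\Lambda_2$; in the places where the lemma is actually applied one has $\lambda\in\Lambda_0$, so this does not affect the paper, but your write-up (like the paper's) silently assumes it.
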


\begin{proof} 
Notice that 
$$
\delta_{\lambda}^{\ast}=-\delta_{\lambda},  
\quad I_{\lambda}^{\ast}=I_{\lambda}
\quad
\text{for $\lambda\in\Lambda$} 
$$ 
for the adjoints $\delta_{\lambda}^{\ast}$ and 
$I_{\lambda}^{\ast}$ of 
$\delta_{\lambda}$ and 
$I_{\lambda}$  in $L_2(\bR^d)$, respectively. 
Using this, the Leibniz rule \eqref{8.7.5.12}, 
and taking into account 
that $\delta_{\lambda}$ and $I_{\mu}$ commute, 
we have 
$$
(I_{\mu}u,\fra\delta_{\lambda}u)=
-(\delta_{\lambda}(\fra I_{\mu}u),u)
=-((\delta_{\lambda}\fra)I_{\lambda}I_{\mu}u,u)-A
$$
with 
$$
A=((I_{\lambda}\fra)\delta_{\lambda}I_{\mu}u,u)
=(\delta_{\lambda}u,I_{\mu}((I_{\lambda}\fra)u)). 
$$  
Using \eqref{1.8.5.12} we get 
$$
A=h(\delta_{\lambda}u,I_{\mu}((P_{\lambda}\fra)u)+B
=(I_{\mu}R_{\lambda}u,((P_{\lambda}\fra)u)+B
$$
with
$$
B=(\delta_{\lambda}u,I_{\mu}(\fra u)),  
$$
and using \eqref{3.18.9.12} we obtain 
\begin{align*}
B=&(\delta_{\lambda}u,\fra I_{\mu}u)
+h(\delta_{\lambda}u,(P_{\mu}\fra)I_{\mu}u)                
+h(\delta_{\lambda}u,(\delta_{\mu}\fra)R_{\mu}u)\\
=&(\delta_{\lambda}u,\fra I_{\mu}u)
+(R_{\lambda}u,(P_{\mu}\fra)I_{\mu}u)                
+(R_{\lambda}u,(\delta_{\mu}\fra)R_{\mu}u). 
\end{align*}
Hence 
\begin{align}
(I_{\mu},\fra\delta_{\lambda}u)
=&-((\delta_{\lambda}\fra)I_{\lambda}I_{\mu}u,u)
-(I_{\mu}R_{\lambda}u,(P_{\lambda}\fra)u)\nonumber\\
&-(I_{\mu}u,\fra\delta_{\lambda}u)
-(R_{\lambda}u,(P_{\mu}\fra)I_{\mu}u))                
-(R_{\lambda}u,(\delta_{\mu}\fra)R_{\mu}u), 
\end{align}
which gives \eqref{1.18.9.12}. To prove 
\eqref{2.18.9.12} notice that 
$$
|P_{\lambda}\fra|\leq |\lambda||D\fra|, 
\quad 
|R_{\lambda}\fra|\leq |\lambda||D\fra|, 
$$
where $|D\fra|$ is the length of the gradient of $\fra$. 
Notice also 
that due to the shift invariance  
of the Lebesgue measure, the linear operators 
$I_{\mu}$ and $R_{\lambda}$ are contractions 
on $H^0$. Hence it is easy to see that \eqref{1.18.9.12} 
implies \eqref{2.18.9.12}.  
\end{proof}
\begin{corollary}                         \label{corollary 1.6.10.12}
Let $m$ be a nonnegative integer and 
let $b=(b^{r}(x))_{r=1}^{\infty}$ be an $l_2$-valued 
function on $\bR^d$ such that $b$ and its 
partial derivatives up to order $\max(m,1)$ are 
$l_2$-valued functions 
which in magnitude are 
bounded by $K$. Then for all $h\neq0$, $u\in H^{m}$ 
and each $\alpha\in\Lambda^m$ 
\begin{equation}                                \label{2.6.10.12}
\sum_{r=1}^{\infty}|(\delta_{\alpha}
I_{\mu}u,\delta_{\alpha}(b^{r}\delta_{\lambda}u))|^2_0
\leq N|u|_m^4       
\end{equation}
for $\lambda\in\Lambda_0$, $\mu\in\Lambda$, and $\alpha\in\Lambda^m$, 
where $N$ is a constant depending only 
on $K$, $d$, $m$ and $|\Lambda|$. 
\end{corollary}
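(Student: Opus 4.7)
The plan is to apply the finite-difference Leibniz rule of Lemma \ref{lemma 1.3.5.12} to expand
$$
\delta_\alpha(b^r\delta_\lambda u)=\sum_{\beta\le\alpha}(\delta_\beta I_{\alpha\setminus\beta}b^r)(\delta_{\alpha\setminus\beta}I_\beta\delta_\lambda u),
$$
and to split the sum into the borderline term $\beta=\epsilon$ and the remaining terms with $|\beta|\ge1$. For $|\beta|\ge1$ the factor acting on $u$ carries $|\alpha\setminus\beta|+1\le m$ finite differences, so Lemma \ref{lemma 1.5.10.12} gives $|\delta_{\alpha\setminus\beta}I_\beta\delta_\lambda u|_0\le N|u|_m$ and similarly $|\delta_\alpha I_\mu u|_0\le N|u|_m$. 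Writing the inner product as a spatial integral of $(\delta_\beta I_{\alpha\setminus\beta}b^r)\Psi$, where $\Psi:=(\delta_\alpha I_\mu u)(\delta_{\alpha\setminus\beta}I_\beta\delta_\lambda u)$ is independent of $r$, Minkowski's inequality in $l_2$ yields
$$
\Bigl(\sum_r|(\delta_\alpha I_\mu u,(\delta_\beta I_{\alpha\setminus\beta}b^r)\delta_{\alpha\setminus\beta}I_\beta\delta_\lambda u)|^2\Bigr)^{1/2}\le\int|\delta_\beta I_{\alpha\setminus\beta}b(x)|_{l_2}\,|\Psi(x)|\,dx\le NK|u|_m^2,
$$
using that $\sup_x|\delta_\beta I_{\alpha\setminus\beta}b(x)|_{l_2}\le NK$, which follows from the integral representation used in the proof of Lemma \ref{lemma 1.5.10.12} together with the hypothesis that the partial derivatives of $b$ up to order $\max(m,1)$ are $l_2$-valued functions bounded in magnitude by $K$.

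For the borderline case $\beta=\epsilon$ the term is $(\delta_\alpha I_\mu u,(I_\alpha b^r)\delta_\alpha\delta_\lambda u)$; here one finite difference too many acts on $u$, so a direct bound is unavailable. The idea is to commute operators: since $\delta_\alpha$, $\delta_\lambda$, $I_\mu$, and $I_\alpha$ all commute, the inner product equals $(I_\mu v,\fra_r\delta_\lambda v)$ with $v:=\delta_\alpha u\in H^0$ and $\fra_r:=I_\alpha b^r$. Now the exact identity \eqref{1.18.9.12} of Lemma \ref{lemma 6.7.5.12} re-expresses this as a sum of four terms of the form $((Q\fra_r)\Phi_1,\Phi_2)$, where $Q\in\{\delta_\lambda,P_\lambda,P_\mu,\delta_\mu\}$ is a first-order operator and $\Phi_1,\Phi_2$ are obtained from $v$ by applying the $L_2$-contractive operators $I_\mu$, $I_\lambda$, $R_\lambda$, $R_\mu$, so that $|\Phi_1|_0|\Phi_2|_0\le N|v|_0^2$. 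Repeating the Minkowski argument for the sum over $r$ gives
$$
\sum_r|((Q\fra_r)\Phi_1,\Phi_2)|^2\le\Bigl(\int|Q\fra(x)|_{l_2}\,|\Phi_1(x)\Phi_2(x)|\,dx\Bigr)^2\le NK^2|v|_0^4\le NK^2|u|_m^4,
$$
since $Q$ commutes with $I_\alpha$ and $I_\alpha$ is an average of shifts that preserves pointwise $l_2$-norms, so $\sup_x|Q\fra(x)|_{l_2}\le\sup_x|Qb(x)|_{l_2}\le NK$ by the hypothesis on the first-order derivatives of $b$.

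The main obstacle is precisely this $\beta=\epsilon$ summand, whose naive handling would demand an $(m+1)$-st order derivative of $u$ that is not assumed; Lemma \ref{lemma 6.7.5.12} is the essential device that trades the extra derivative on $u$ for a first-order derivative of $b$, and its scalar statement must be upgraded to an $l_2$-valued bound by the pointwise $l_2$-Cauchy--Schwarz step above rather than by summing $L^\infty$-norms component by component. Combining the $O(NK^2|u|_m^4)$ estimate for the $\beta=\epsilon$ term with the $O(N^2K^2|u|_m^4)$ estimate for the (finitely many) $\beta\le\alpha$ with $|\beta|\ge1$ yields \eqref{2.6.10.12} with a constant $N$ depending only on $K$, $d$, $m$, and $|\Lambda|$.
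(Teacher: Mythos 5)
Your proof is correct and follows essentially the same route as the paper: the same Leibniz decomposition isolating the borderline term $(I_{\mu}u_{\alpha},(I_{\alpha}b^{r})\delta_{\lambda}u_{\alpha})$, the same use of the identity \eqref{1.18.9.12} of Lemma \ref{lemma 6.7.5.12} to trade the extra difference on $u$ for a first-order difference of $b$, and the same treatment of the remaining terms via Lemmas \ref{lemma 1.3.5.12} and \ref{lemma 1.5.10.12} together with a pointwise $l_2$ Cauchy--Schwarz over $r$. (Only a cosmetic imprecision: $I_{\alpha}$ does not \emph{preserve} pointwise $l_2$-norms, but being a convex combination of shifts it does not increase their supremum, which is all you use.)
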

\begin{proof}
Consider the case $m=0$. Then by definition $\delta_{\alpha}$ 
is the identity, and by \eqref{1.18.9.12}
we have 
$$
\sum_{r=1}^{\infty}
|(I_{\mu}u,b^{r}\delta_{\lambda}u)|^2_0
\leq 4\sum_{i=1}^4A_i 
$$
with
$$
A_1=\sum_{r=1}^{\infty}
|(u\delta_{\lambda}b^{r},I_{\lambda}I_{\mu}u)|^2_0
\leq 
 |u|^2_0\sum_{r=1}^{\infty}
|u\delta_{\lambda}b^{r}|^2_0
\leq N|u|^4_0, 
$$
$$
A_2=\sum_{r=1}^{\infty}
|(I_{\mu}R_{\lambda}u,uP_{\lambda}b^{r})|^2
\leq |u|^2_0\sum_{r=1}^{\infty}
|uP_{\lambda}b^{r}|^2_0
\leq N|u|^4_0,
$$
$$   
A_3=\sum_{r=1}^{\infty}
|(R_{\lambda}u,(P_{\mu}b^{r})I_{\mu}u)|^2_0
\leq
|u|^2_0\sum_{r=1}^{\infty}
|(I_{\mu}u)P_{\mu}b^{r}|^2_0
\leq N|u|^4_0,
$$
$$
A_4=\sum_{r=1}^{\infty}
|(R_{\lambda}u,(\delta_{\mu}b^{r})R_{\mu}u)|
\leq 
|u|^2_0\sum_{r=1}^{\infty}
|(R_{\mu}u)\delta_{\mu}b^{r}|^2_0
\leq N|u|^4_0,
$$
where $N$ is a constant depending only on 
$m$, $d$, $K$ and $|\Lambda|$. 
This proves \eqref{2.6.10.12} when $m=0$. 
Assume now $m\geq1$. Then 
$$
\sum_{r=1}^{\infty}|(\delta_{\alpha}
I_{\mu}u,\delta_{\alpha}(b^{r}\delta_{\lambda}u))|^2_0
\leq 2B_1+2B_2, 
$$
where 
$$
B_1=\sum_{r=1}^{\infty}
|(I_{\mu}u_{\alpha},
(I_{\alpha}b^{r})\delta_{\lambda}u_{\alpha})|^2_0,
$$
$$
B_2=\sum_{r=1}^{\infty}
|(I_{\mu}u_{\alpha}, 
\delta_{\alpha}(b^{r}\delta_{\lambda}u)
-(I_{\alpha}b^{r})\delta_{\lambda}u_{\alpha})|^2_0. 
$$
Using that \eqref{2.6.10.12} holds when $m=0$, we have 
$$
B_1\leq N|\delta_{\alpha}u|^4_0\leq N'|u|^4_{m}  
$$
with a constant $N'=N'(m,d,K,|\Lambda|)$. 
Using Lemma \ref{lemma 1.3.5.12}
and Lemma \ref{lemma 1.5.10.12}
we get 
$$
B_2\leq |u_{\alpha}|^2_0
\sum_{\rho=1}^{\infty}
|\delta_{\alpha}(\frb^{\rho}\delta_{\lambda}u)
-(I_{\alpha}\frb^{\rho})\delta_{\lambda}u_{\alpha})|^2
\leq N|u|^4_m
$$
with a constant $N=N(m,d,K,|\Lambda|)$, 
and the proof is complete.  
\end{proof}

\begin{lemma}                                     \label{lemma 1.4.10.12}
Let $m\geq0$ be an integer and let 
$\frak p$ be a nonnegative function on $\bR^d$ 
such that its partial derivatives up to order 
$\max(m,1)$ are functions, which together with 
$\frak p$ are bounded in magnitude by  
a constant $K$. Then for every $\alpha\in\Lambda^m$, 
$\lambda\in\Lambda_0$ and for all $h\in(0,\infty)$ 
we have 
$$
(\delta_{\alpha}u,\delta_{\alpha}(\frak p\delta_{h,\lambda}u))\leq N|u|^2_m
$$
for all $u\in H^{m}$, where $N$ is a constant depending only on 
$K$, $d$, $|\Lambda|$ and $m$. 
\end{lemma}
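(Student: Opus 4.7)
The plan is to treat the base case $m=0$ directly and reduce the general $m\geq 1$ case to it via the Leibniz rule of Lemma~\ref{lemma 1.3.5.12}; the nonnegativity of $\frak p$ and the sign condition $h>0$ will enter only in the base case.

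For $m=0$, I start from the pointwise identity
\[
 u(x)\bigl(u(x+h\lambda)-u(x)\bigr)=\tfrac{1}{2}\bigl(u(x+h\lambda)^2-u(x)^2\bigr)-\tfrac{1}{2}\bigl(u(x+h\lambda)-u(x)\bigr)^2,
\]
which after multiplying by $\frak p(x)/h$ and integrating over $\bR^d$ gives
\[
 (u,\frak p\,\delta_{h,\lambda}u)=\tfrac{1}{2}(\frak p,\delta_{h,\lambda}u^2)-\tfrac{h}{2}(\frak p,|\delta_{h,\lambda}u|^2).
\]
Since $h>0$ and $\frak p\geq 0$, the second term on the right is nonpositive and may be discarded. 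For the first term I transfer the difference onto $\frak p$ via the change of variable $(\delta_{h,\lambda}v,w)=(v,\delta_{h,-\lambda}w)$, and then use the integral representation appearing in the proof of Lemma~\ref{lemma 1.5.10.12} to bound $|\delta_{h,-\lambda}\frak p|_\infty\leq|\lambda|K$. This yields $(u,\frak p\,\delta_{h,\lambda}u)\leq N|u|_0^2$ with $N=N(K,d,|\Lambda|)$.

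For $m\geq 1$, Lemma~\ref{lemma 1.3.5.12} together with the fact that $\delta_{h,\lambda}$ commutes pointwise with every $\delta^h_\beta$ and $I^h_\beta$ gives
\[
 \delta_\alpha\bigl(\frak p\,\delta_{h,\lambda}u\bigr)=\sum_{\mu\leq\alpha}(\delta_\mu I_{\alpha\setminus\mu}\frak p)\,\delta_{h,\lambda}\bigl(\delta_{\alpha\setminus\mu}I_\mu u\bigr).
\]
The main term $\mu=\epsilon$ contributes $(\delta_\alpha u,(I_\alpha\frak p)\,\delta_{h,\lambda}\delta_\alpha u)$; because $I_\alpha$ is a convex combination of translations it preserves nonnegativity and commutes with $D$, so $\tilde{\frak p}:=I_\alpha\frak p\geq 0$ with $|D\tilde{\frak p}|_\infty\leq NK$, and the base-case argument applied with $w=\delta_\alpha u$ in place of $u$ and $\tilde{\frak p}$ in place of $\frak p$ bounds this main term by $N|\delta_\alpha u|_0^2\leq N|u|_m^2$. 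For each remaining term, with $|\mu|\geq 1$, the coefficient $\delta_\mu I_{\alpha\setminus\mu}\frak p$ is bounded in $L_\infty$ by $NK$ (the partial derivatives of $\frak p$ up to order $m$ are bounded by $K$, $I_{\alpha\setminus\mu}$ does not increase $L_\infty$ norms, and a finite difference is an average of directional derivatives of the same order), while $\delta_{h,\lambda}\delta_{\alpha\setminus\mu}I_\mu u$ involves $1+|\alpha\setminus\mu|\leq m$ difference operators and so is controlled in $L_2$ by $N|u|_m$ via Lemma~\ref{lemma 1.5.10.12} and the $L_2$-contractivity of $I_\mu$. Cauchy--Schwarz and summation over $\mu$ then close the estimate.

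The main obstacle is really the base case: the forward difference $\delta_{h,\lambda}$, unlike its symmetric counterpart $\delta_\lambda^h$, is not anti-self-adjoint in $L_2$, so a naive integration by parts would destroy the useful sign of $\frak p$. The quadratic identity above is precisely what retains it, and for the same reason the hypothesis $h>0$ is essential.
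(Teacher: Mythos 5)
Your proposal is correct and follows essentially the same route as the paper: isolate the main term $(I_\alpha\frak p)\,\delta_{h,\lambda}\delta_\alpha u$ (the paper treats the remainder as a commutator bounded by $N|u|_m^2$, you expand it via Lemma~\ref{lemma 1.3.5.12}), apply the quadratic identity from the discrete Leibniz rule \eqref{2.3.5.12}, discard the term of favourable sign using $I_\alpha\frak p\geq0$ and $h>0$, and move the remaining difference onto $\frak p$ by the adjoint $\delta_{h,\lambda}^{\ast}=\delta_{h,-\lambda}$. No substantive difference.
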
 

\begin{proof}
Clearly 
$$
\delta_{\alpha}(\frak p\delta_{h,\lambda}u)
=\delta_{\alpha}(\frak p\delta_{h,\lambda}u)
-(I_{\alpha}\frak p)\delta_{\alpha}\delta_{h,\lambda}u
+(I_{\alpha}\frak p)\delta_{h,\lambda}\delta_{\alpha}u,
$$
and 
$$
(\delta_{\alpha}u,\delta_{\alpha}(\frak p\delta_{\lambda}u)
-(I_{\alpha}\frak p)\delta_{\alpha}\delta_{\lambda}u)\leq N|u|^2_m
$$
with a constant $N=N(m,K,d,\Lambda)$. By the Leibniz rule 
\eqref{2.3.5.12} we have 
$$
(\delta_{\alpha}u)\delta_{h,\lambda}\delta_{\alpha}u
=\tfrac{1}{2}\delta_{h,\lambda}(\delta_{\alpha}u)^2
-h_{\lambda}(\delta_{h,\lambda}\delta_{\alpha}u)^2. 
$$
Consequently,
$$
(\delta_{\alpha}u,\delta_{\alpha}(\frak p\delta_{h,\lambda}u))
\leq N|u|^2_m
+(I_{\alpha}\frak p,
(\delta_{\alpha}u)\delta_{h,\lambda}\delta_{\alpha}u)
$$
\begin{equation}                                \label{1.4.10.12}
= N|u|^2_m
+
\frac{1}{2}
(I_{\alpha}\frak p,\delta_{h,\lambda}(\delta_{\alpha}u)^2)
-h_{\lambda}
(I_{\alpha}\frak p,(\delta_{h,\lambda}\delta_{\alpha}u)^2). 
\end{equation}
Due to $I_{\alpha}\frak p\geq0$ and $h_{\lambda}\geq0$ 
we have 
$$
h_{\lambda}
(I_{\alpha}\frak p,(\delta_{h,\lambda}\delta_{\alpha}u)^2)\geq0, 
$$
and by taking the adjoint 
$\delta_{h,\lambda}^{\ast}=\delta_{h,-\lambda}$ 
in $L_2$, we have 
$$
(I_{\alpha}\frak p,\delta_{h,\lambda}(\delta_{\alpha}u)^2)
=
(\delta_{h,-\lambda}I_{\alpha}\frak p,(\delta_{\alpha}u)^2)
\leq N|u|^2_m
$$
with a constant $N=N(K,m,d,|\Lambda|)$. 
Thus the lemma follows from \eqref{1.4.10.12}. 
\end{proof}

\begin{lemma}                           \label{lemma weakerror}
The following 
statements hold for all $h>0$ and $\lambda,\mu\in\Lambda_0$. 
\begin{itemize}
\item[(i)] For $v,\varphi\in H^1$ we have 
\begin{equation}                        \label{1.29.1012}
|(\delta_{\lambda}^hv,\varphi)
-(\partial_{\lambda}v,\varphi)|\leq
\frac{h}{2}|\lambda|^2|Dv|_0|D\varphi|_0.
\end{equation}
\item[(ii)] Let $b=(b^{r}(x))_{r=1}^{\infty}$ be 
an $l_2$-valued function on $\bR^d$, with $l_2$ norm 
bounded by a constant $K$. Assume that its first 
order derivatives in $x$ are $l_2$-valued functions, 
which are in $l_2$ norm bounded by $K$.   
Then there is a constant $N=N(K,d)$ such that 
for $v,\varphi\in H^1$ 
$$
\sum_{r=1}^{\infty}|(b^r\delta_{\lambda}^hv,\varphi)
-(b^r\partial_{\lambda}v,\varphi)|^2\leq
Nh^2|\lambda|^4|Dv|_0^2|\varphi|_1^2. 
$$
\item[(iii)] Let $a=a(x)$ be a real function on $\bR^d$, 
bounded by a constant $K$, 
such that its derivatives up to second order are functions, 
bounded by $K$. Then for all $v\in H^1$ and $\varphi\in H^2$ 
we have  
\begin{align}
|(a\delta^h_{\mu}\delta^h_{\lambda}v,\varphi)
-(\partial_{\lambda}v,\partial_{-\mu}(a\varphi))|
\leq& \frac{h}{2}
|\lambda|(|\lambda|+|\mu|)|Dv|_0|D^2(a\varphi)|_0      \nonumber\\
\leq& 
Nh|Dv|_0|\varphi|_2                               \label{3.29.10.12}
\end{align}
with a constant $N=N(K,d, |\lambda|,|\mu|)$. 
\end{itemize}
\end{lemma}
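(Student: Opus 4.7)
The plan is to reduce each assertion to an integral representation of the forward difference $\delta_{h,\lambda}v(x)=\int_0^1\partial_\lambda v(x+sh\lambda)\,ds$, combined with a change of variables that moves the shift from $v$ onto the test function or its derivatives. I would first prove (i), then deduce (ii) by absorbing $b^r$ into the test function, and finally prove (iii) by integrating by parts twice and splitting the resulting error term.

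For part (i), since $\delta_\lambda^h=\tfrac12(\delta_{h,\lambda}+\delta_{-h,\lambda})$, the integral representation gives
\[
\delta_\lambda^h v(x)-\partial_\lambda v(x)
=\tfrac12\int_0^1\bigl[\partial_\lambda v(x+sh\lambda)+\partial_\lambda v(x-sh\lambda)-2\partial_\lambda v(x)\bigr]\,ds.
\]
Pairing with $\varphi$ and changing variables, each term becomes $\int \partial_\lambda v(y)[\varphi(y\mp sh\lambda)-\varphi(y)]\,dy$. Since $\varphi\in H^1$, the bracket has $L_2$ norm at most $sh|\partial_\lambda\varphi|_0\le sh|\lambda||D\varphi|_0$; Cauchy--Schwarz with $|\partial_\lambda v|_0\le|\lambda||Dv|_0$ and the integration $\int_0^1 s\,ds=\tfrac12$ yield the bound $\tfrac{h}{2}|\lambda|^2|Dv|_0|D\varphi|_0$.

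For part (ii), I would simply move $b^r$ into the test function, $(b^r\delta_\lambda^h v-b^r\partial_\lambda v,\varphi)=(\delta_\lambda^h v-\partial_\lambda v,b^r\varphi)$, and apply (i) with test $b^r\varphi$. Squaring, summing over $r$, and using $|D(b^r\varphi)|_0^2\le 2|(Db^r)\varphi|_0^2+2|b^r D\varphi|_0^2$ together with the $l_2$-boundedness of $b$ and $Db$ gives $\sum_r|D(b^r\varphi)|_0^2\le 2K^2|\varphi|_1^2$, which produces the stated bound with $N=N(K,d)$.

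For part (iii), the main step is to use the adjoint identity $(\delta_\mu^h)^\ast=-\delta_\mu^h$ to rewrite
\[
(a\delta_\mu^h\delta_\lambda^h v,\varphi)-(\partial_\lambda v,\partial_{-\mu}(a\varphi))
=(\partial_\lambda v,\partial_\mu(a\varphi))-(\delta_\lambda^h v,\delta_\mu^h(a\varphi)),
\]
which I would split as
\[
\bigl(\partial_\lambda v-\delta_\lambda^h v,\,\delta_\mu^h(a\varphi)\bigr)
+\bigl(\partial_\lambda v,\,\partial_\mu(a\varphi)-\delta_\mu^h(a\varphi)\bigr).
\]
The first summand is controlled by (i) with test function $\delta_\mu^h(a\varphi)\in H^1$; since $\delta_\mu^h$ commutes with $D_i$, Lemma \ref{lemma 1.5.10.12} gives $|D\delta_\mu^h(a\varphi)|_0\le|\mu||D^2(a\varphi)|_0$. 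For the second summand, because $a\varphi\in H^2$ a Taylor expansion of $\partial_\mu\psi$ to second order produces the pointwise identity
\[
\delta_\mu^h\psi(x)-\partial_\mu\psi(x)
=\tfrac12\int_0^1\int_0^{sh}\bigl[\partial_\mu^2\psi(x+u\mu)-\partial_\mu^2\psi(x-u\mu)\bigr]\,du\,ds,
\]
whence by Minkowski $|\delta_\mu^h\psi-\partial_\mu\psi|_0\le\tfrac{h|\mu|^2}{2}|D^2\psi|_0$; Cauchy--Schwarz with $|\partial_\lambda v|_0\le|\lambda||Dv|_0$ finishes this term. Adding both bounds yields an estimate of the desired form, and the second inequality of (iii) follows by absorbing $|\lambda|,|\mu|$ into the constant $N$. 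The main obstacle is matching the split so that one factor of $v$ carries one derivative while $a\varphi$ can safely absorb two derivatives; the above choice of splitting is dictated precisely by the $H^1$ regularity of $v$ and the $H^2$ regularity of $a\varphi$.
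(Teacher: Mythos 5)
Your proof is correct, and for parts (i) and (ii) it is essentially the paper's argument: the paper represents $\delta^h_\lambda v-\partial_\lambda v$ by an integral involving $\partial^2_\lambda v$ and integrates by parts once to move the extra derivative onto $\varphi$, while you keep only $\partial_\lambda v$ and transfer the translations onto $\varphi$, using $|\varphi(\cdot\pm sh\lambda)-\varphi|_0\le sh|\lambda|\,|D\varphi|_0$; the two devices are interchangeable and both land on the constant $h/2$. Part (ii) is obtained exactly as in the paper, by substituting $b^r\varphi$ for $\varphi$ and summing over $r$. In part (iii) your decomposition genuinely differs: the paper takes adjoints twice, so that both finite differences act on $a\varphi$, telescopes through the intermediate term $(v,\partial_\mu\delta^h_\lambda(a\varphi))$, and applies (i) twice with $v$ playing the role of the test function; you take only one adjoint, telescope through $(\partial_\lambda v,\delta^h_\mu(a\varphi))$, apply (i) once, and dispose of the remaining term by the direct $L_2$ bound $|\delta^h_\mu\psi-\partial_\mu\psi|_0\le\tfrac{h}{2}|\mu|^2|D^2\psi|_0$ for $\psi=a\varphi\in H^2$, which is precisely estimate \eqref{2.13.12.5.9} of Lemma \ref{lemma 5.27.1} with $n=0$. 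Both splittings exploit exactly the available regularity ($v\in H^1$, $a\varphi\in H^2$) and yield \eqref{3.29.10.12}; the only discrepancy is in the bookkeeping of the factors $|\lambda|$, $|\mu|$ in the intermediate bound, which is immaterial since they are absorbed into the constant $N$ in the final form. Nothing is missing.
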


\begin{proof}
It is sufficient to prove the lemma 
for $v,\varphi\in C_0^{\infty}(\bR^d)$. For such 
$v$ and $\varphi$ 
we have   
$$
(\delta^h_{\lambda}v-\partial_{\lambda}v,\varphi)
=\frac{h}{4}
\int_{-1}^1\int_{-1}^1
\int_{\bR^d}\theta_1
\partial^2_{\lambda}v(x+\theta_1\theta_2h\lambda)\varphi(x)
\,dx\,d\theta_1\,d\theta_2. 
$$
Hence by integration by parts, using 
the Bunyakovsky-Cauchy-Schwarz inequality 
and the shift invariance of the Lebesgue measure,  
we obtain 
$$
|(\delta^h_{\lambda}v-\partial_{\lambda}v,\varphi)|
\leq \frac{h}{4}\int_{-1}^1\int_{-1}^1|\theta_1|
|\partial_{\lambda}v|_0
|\partial_{\lambda}\varphi|_0
\,d\theta_1\,d\theta_2=\frac{h}{2}
|\partial_{\lambda}v|_0
|\partial_{\lambda}\varphi|_0, 
$$
which proves \eqref{1.29.1012}. Hence we can easily 
obtain (ii) by substituting 
$b^r\varphi$ in place of $\varphi$ in \eqref{1.29.1012}, 
and then by taking the square and summing up 
the inequalities over $r\in\{1,2,...\}$.  To prove (iii) 
notice that 
$$
(a\delta_{\mu}^h\delta_{\lambda}^hv,\varphi)
-(\partial_{\lambda}v,\partial_{-\mu}(a\varphi))=A+B
$$
with
$$
A=(v,\delta_{\mu}^h\delta_{\lambda}^h(a\varphi)-
(v,\partial_{\mu}\delta_{\lambda}^h(a\varphi)
$$
$$
B:=(v,\delta_{\lambda}^h\partial_{\mu}(a\varphi)
-(v,\partial_{\lambda}\partial_{\mu}(a\varphi)). 
$$
By virtue of (i) we have 
$$
|A|\leq \frac{h}{2}|\mu^2||Dv|_0|D\delta_{\lambda}(a\varphi)|_0
\leq \frac{h}{2}|\mu|^2|\lambda||Dv|_0|D^2(a\varphi)|_0,
$$
$$
|B|\leq \frac{h}{2}|\lambda|^2|Dv||DD_{\mu}(a\varphi)|_0 
\leq \frac{h}{2}|\lambda|^2|\mu||Dv|_0|D^2(a\varphi)|_0, 
$$
and by adding up these estimates we get \eqref{3.29.10.12}. 
\end{proof}

Next we formulate a generalization of 
a well-known lemma from \cite{OR}. 

\begin{lemma}                         \label{lemma 1.15.9.12}
Let $\sigma(x)$ be a $d\times m$ matrix for every 
$x\in\bR^d$ such that 
$$
|\sigma(x)-\sigma(y)|
\leq L|x-y|\quad\text{for all $x,y\in\bR^d$}.
$$
Let $V$ be a symmetric $d\times d$ matrix. 
Then the following estimates hold.
\begin{enumerate}
\item[(i)] For every $\lambda\in\bR^d\setminus\{0\}$ 
$$
|\partial_{\lambda}(\sigma\sigma^{\ast})^{ij}V^{ij}|^2
\leq 4L^2|\lambda| (\sigma\sigma^{\ast})^{ij}V^{ik}V^{jk}
\quad \text{for $dx$-almost every $x\in\bR^d$}; 
$$
\item[(ii)] for every 
$\lambda\in\Lambda_0$, $h\neq0$ , 
$\varepsilon>0$ and $x\in\bR^d$ we have 
$$
|\delta_{h,\lambda}(\sigma\sigma^{\ast})^{ij}V^{ij}|^2
\leq 4(1+\varepsilon)
L^2|\lambda| (\sigma\sigma^{\ast})^{ij}V^{ik}V^{jk}
+(1+\varepsilon^{-1})L^4|\lambda|^4h^2|V|^2;
$$
\item[(iii)] for every $\lambda\in\Lambda$, $h\neq0$  and $x\in\bR^d$ 
 we have 
$$
|\delta^h_{\lambda}(\sigma\sigma^{\ast})^{ij}V^{ij}|^2
\leq 5L^2|\lambda| (\sigma\sigma^{\ast})^{ij}V^{ik}V^{jk}
+5L^4|\lambda|^4h^2|V|^2.  
$$
\end{enumerate}
\end{lemma}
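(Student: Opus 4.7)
The plan is to treat the three assertions in order, extracting (i) as the standard Oleinik--Radkevich inequality and then handling (ii) and (iii) by careful symmetrizations of finite differences of the product $\sigma\sigma^{\ast}$. Throughout, I would use the Frobenius (Hilbert--Schmidt) inner product $\langle A,B\rangle=\mathrm{tr}(AB^{\ast})$ and its Cauchy--Schwarz inequality $|\mathrm{tr}(AB^{\ast})|\le|A|\,|B|$, together with the observation that for symmetric $V$ and cyclic trace one has
$\mathrm{tr}(V(\partial_{\lambda}\sigma)\sigma^{\ast})=\mathrm{tr}(V\sigma(\partial_{\lambda}\sigma)^{\ast})$, and likewise for finite differences.

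For (i), I would write $\partial_{\lambda}((\sigma\sigma^{\ast})^{ij})V^{ij}=\mathrm{tr}(V\partial_{\lambda}(\sigma\sigma^{\ast}))=2\,\mathrm{tr}(V\sigma(\partial_{\lambda}\sigma)^{\ast})$ by Leibniz and symmetry of $V$, apply Frobenius--Cauchy--Schwarz to bound this in modulus by $2|V\sigma|\,|\partial_{\lambda}\sigma|$, and finally use the Lipschitz condition in the form $|\partial_{\lambda}\sigma(x)|\le L|\lambda|$ a.e.\ (since the Hilbert--Schmidt norm of the directional derivative is the almost everywhere limit of $|\sigma(x+t\lambda)-\sigma(x)|/t$). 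The identity $|V\sigma|^{2}=\mathrm{tr}(V\sigma\sigma^{\ast}V)=V^{ik}V^{jk}(\sigma\sigma^{\ast})^{ij}$ then delivers the stated inequality.

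For (ii), the main idea is the symmetric difference-of-squares decomposition
\[
\sigma_{h}\sigma_{h}^{\ast}-\sigma\sigma^{\ast}=A(\delta\sigma)^{\ast}+(\delta\sigma)A^{\ast},\qquad A:=\tfrac{1}{2}(\sigma_{h}+\sigma)=\sigma+\tfrac{1}{2}\delta\sigma,
\]
where $\sigma_{h}=\sigma(x+h\lambda)$ and $\delta\sigma=\sigma_{h}-\sigma$. Tracing against $V$ and using symmetry of $V$ again gives $\delta_{h,\lambda}((\sigma\sigma^{\ast})^{ij})V^{ij}=(2/h)\,\mathrm{tr}(VA(\delta\sigma)^{\ast})$, whence by Frobenius--Cauchy--Schwarz
\[
|\delta_{h,\lambda}((\sigma\sigma^{\ast})^{ij})V^{ij}|^{2}\le \frac{4}{h^{2}}|VA|^{2}\,|\delta\sigma|^{2}.
\]
Then $|\delta\sigma|^{2}/h^{2}\le L^{2}|\lambda|^{2}$ by Lipschitzness of $\sigma$, while
$|VA|^{2}=|V\sigma+\tfrac{1}{2}V\delta\sigma|^{2}\le(1+\varepsilon)|V\sigma|^{2}+(1+\varepsilon^{-1})\tfrac{1}{4}|V|^{2}|\delta\sigma|^{2}$
by the Young-type inequality $|a+b|^{2}\le(1+\varepsilon)|a|^{2}+(1+\varepsilon^{-1})|b|^{2}$ together with $|V\delta\sigma|\le|V|\,|\delta\sigma|$ in Frobenius norm. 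Combining, the two terms on the right of (ii) fall out in the stated form.

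For (iii), since $\delta^{h}_{\lambda}=\tfrac{1}{2}(\delta_{h,\lambda}+\delta_{-h,\lambda})$, the elementary inequality $|a+b|^{2}\le 2|a|^{2}+2|b|^{2}$ reduces the bound to (ii) applied with both $h$ and $-h$; choosing $\varepsilon=1/4$ produces the factors $4(1+\varepsilon)=5$ and $1+\varepsilon^{-1}=5$, yielding the claimed inequality.

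The main obstacle is the choice of decomposition in step (ii): a naive split such as $\sigma_{h}\sigma_{h}^{\ast}-\sigma\sigma^{\ast}=\sigma_{h}(\delta\sigma)^{\ast}+(\delta\sigma)\sigma^{\ast}$ would leave $|V\sigma_{h}|^{2}$ in the leading term rather than $|V\sigma|^{2}$, forcing a further $O(h)$ perturbation that cannot be absorbed without producing an $\varepsilon$-loss in the $|V\sigma|^{2}$ coefficient beyond the $(1+\varepsilon)$ factor already present. Using the midpoint $A=(\sigma_{h}+\sigma)/2$ makes the leading piece exactly $|V\sigma|^{2}$ (up to the clean $(1+\varepsilon)$ factor) and places the whole $h$-dependence into the quadratic remainder $(1+\varepsilon^{-1})L^{4}|\lambda|^{4}h^{2}|V|^{2}$, which is precisely what the statement demands.
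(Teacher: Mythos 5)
Your proof is correct and follows essentially the same route as the paper: a discrete Leibniz expansion of $\delta_{h,\lambda}(\sigma\sigma^{\ast})$, the Frobenius Cauchy--Schwarz inequality, the Young inequality $(a+b)^2\le(1+\varepsilon)a^2+(1+\varepsilon^{-1})b^2$ together with $|\delta_{h,\lambda}\sigma|\le L|h||\lambda|$, and then (iii) deduced from (ii) by averaging over $\pm h$ with $\varepsilon=1/4$. The only cosmetic difference is that you symmetrize around the midpoint $(\sigma_h+\sigma)/2$, whereas the paper writes $\delta_{h,\lambda}(\sigma\sigma^{\ast})=\sigma_{\lambda}\sigma^{\ast}+\sigma\sigma_{\lambda}^{\ast}+h\sigma_{\lambda}\sigma_{\lambda}^{\ast}$ with $\sigma_{\lambda}=\delta_{h,\lambda}\sigma$ and absorbs the quadratic term into the $\varepsilon^{-1}$ part; both yield identical constants, so the ``obstacle'' you describe for non-midpoint splits does not in fact arise for the paper's choice.
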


\begin{proof} 
Part (i) is well-known from \cite{OR}, and (iii) 
obviously follows from (ii). To prove (ii) we use  
the discrete Leibniz rule 
to get 
$$
\delta_{h,\lambda}(\sigma\sigma^{\ast})
=\sigma_{\lambda}\sigma^{\ast}+
\sigma\sigma^{\ast}_{\lambda}
+
h\sigma_{\lambda}\sigma^{\ast}_{\lambda}, 
$$
where $\sigma_{\lambda}:=\delta_{h,\lambda}\sigma$. 
Hence using the Cauchy inequality and the simple inequality 
$$
(a+b)^2\leq (1+\varepsilon)a^2+(1+\varepsilon^{-1})b^2, 
$$
we have 
$$
|\delta_{h,\lambda}(\sigma\sigma^{\ast})^{ij}V^{ij}|^2
=|2\sigma_{\lambda}^{ik}\sigma^{jk}V^{ij}
+h\sigma_{\lambda}^{ik}\sigma_{\lambda}^{jk}V^{ij}|^2
$$
$$
\leq 4(1+\varepsilon)|\sigma_{\lambda}|^2
(\sigma\sigma^{\ast})^{jl}V^{ji}V^{li}
+(1+\varepsilon^{-1})h^2\|\sigma_{\lambda}\|^4 |V|^2,  
$$
and obtain (ii) by taking into account 
$|\sigma_{\lambda}|\leq L\lambda$.
\end{proof}
Finally we present a stochastic Gronwall  
lemma from \cite{G12}, which improves Lemma 3.7 
from \cite{GS}. 
 
\begin{lemma}                             \label{lemma 1.6.10.12}
Let $y=(y_t)_{t\in[0,T]}$ 
and $F=(F_t)_{t\in[0,T]}$ 
be  
adapted nonnegative stochastic processes 
and let $m=(m_t)_{t\in[0,T]}$ be a 
continuous local martingale  
such that almost surely 
$$
dy_t\leq N(y_t+F_t)\,dt+dm_t
\quad\text{for all $t\in[0,T]$}
$$
where $N$ is a constant. 
Assume that 
$$
d\<m\>_t\leq N (y^2_t+G_ty_t)\,dt
$$
for a nonnegative stochastic process 
$G=(G_t)_{t\in[0,T]}$ 
and a constant $N$. Then for every $p>0$ 
$$
E\sup_{t\leq T}|y_t|^p\leq CE|y_0|^p
+CE\left\{\int_0^{T}(F_t+G_t)\,dt\right\}^{p}, 
$$
where $C$ is a constant depending only on $p$, 
$N$ and $T$. 
\end{lemma}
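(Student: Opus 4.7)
The proof combines a pathwise Gr\"onwall bound, the Burkholder--Davis--Gundy (BDG) inequality, and Young's inequality, together with a time-subdivision argument to carry out the absorption step. Writing $y^*_t:=\sup_{s\leq t}y_s$ and $m^*_t:=\sup_{s\leq t}|m_s|$, the integrated form of the hypothesized differential inequality gives, for all $t\leq T$,
\[
y^*_t\leq y_0+N\int_0^t y^*_s\,ds+N\int_0^T F_s\,ds+m^*_t.
\]
Since the right-hand side is nondecreasing in $t$ as a function of the ``source'' $y_0+N\int_0^T F_s\,ds+m^*_t$, classical pathwise Gr\"onwall yields
\[
y^*_T\leq e^{NT}\Bigl(y_0+N\int_0^T F_s\,ds+m^*_T\Bigr).
\]

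The central obstacle is the quadratic dependence of $d\langle m\rangle_t\leq N(y_t^2+G_ty_t)\,dt$ on $y_t$. Indeed, BDG together with $y_s\leq y^*_T$ gives $E(m^*_T)^p\leq C_pN^{p/2}E\bigl(y^*_T(Ty^*_T+\int_0^T G_s\,ds)\bigr)^{p/2}$, which produces a term of order $T^{p/2}E(y^*_T)^p$ on the right; the coefficient $T^{p/2}$ cannot be made small, so single-pass absorption on $[0,T]$ fails. My remedy is to partition $[0,T]$ into $K$ intervals $[t_i,t_{i+1}]$ of length $\delta:=T/K$, and apply the pathwise bound on each piece with $y_{t_i}$ as new initial value:
\[
y^{*,i}:=\sup_{t\in[t_i,t_{i+1}]}y_t\leq e^{N\delta}\Bigl(y_{t_i}+N\int_{t_i}^{t_{i+1}}F_s\,ds+m^{*,i}\Bigr),
\]
where $m^{*,i}:=\sup_{t\in[t_i,t_{i+1}]}|m_t-m_{t_i}|$.

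Before any absorption I localize by the stopping times $\tau_n:=\inf\{t\leq T:y_t+\langle m\rangle_t+\int_0^t(F_s+G_s)\,ds\geq n\}\wedge T$, which makes $E(y^*_{\tau_n})^p<\infty$ automatically; I remove the localization at the very end by Fatou's lemma. BDG on $[t_i,t_{i+1}]$ and $y_s\leq y^{*,i}$ give
\[
E(m^{*,i})^p\leq C_pN^{p/2}E\Bigl((y^{*,i})^{p/2}\bigl(\delta y^{*,i}+G^{(i)}\bigr)^{p/2}\Bigr),\qquad G^{(i)}:=\int_{t_i}^{t_{i+1}}G_s\,ds.
\]
Splitting $(a+b)^{p/2}$ and applying Young's inequality to the cross term $(y^{*,i})^{p/2}(G^{(i)})^{p/2}$, then raising the pathwise bound on $y^{*,i}$ to the $p$-th power, I arrive at
\[
E(y^{*,i})^p\leq C_1 Ey_{t_i}^p+C_2(\delta^{p/2}+\varepsilon)E(y^{*,i})^p+C_3 E\Bigl(\int_{t_i}^{t_{i+1}}(F_s+G_s)\,ds\Bigr)^p,
\]
with $C_j=C_j(p,N)$. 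Choosing $\delta$ and $\varepsilon$ so small that $C_2(\delta^{p/2}+\varepsilon)\leq 1/2$---this is the delicate step that finally controls the quadratic $y_t^2$---absorbs the middle term.

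Since $y_{t_{i+1}}\leq y^{*,i}$, the resulting inequality is a linear recursion for $Ey^p_{t_i}$ that unrolls to a uniform bound over $i=0,\dots,K$. Combining with $E(y^*_T)^p=E(\max_i y^{*,i})^p\leq\sum_i E(y^{*,i})^p$ and the crude $\sum_i E\bigl(\int_{t_i}^{t_{i+1}}(F_s+G_s)\,ds\bigr)^p\leq K\,E\bigl(\int_0^T(F_s+G_s)\,ds\bigr)^p$, I conclude
\[
E(y^*_T)^p\leq\widetilde C\Bigl(Ey_0^p+E\bigl[\textstyle\int_0^T(F_s+G_s)\,ds\bigr]^p\Bigr),
\]
with $\widetilde C=\widetilde C(p,N,T)$, and Fatou removes the localization. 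The case $0<p<2$ requires no new idea, since $(a+b)^{p/2}\leq a^{p/2}+b^{p/2}$ is even more favourable then. The only genuine difficulty is the quadratic $y_t^2$ in $d\langle m\rangle_t$, which is what makes single-scale absorption fail and forces the time-subdivision step.
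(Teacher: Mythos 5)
Your argument is correct, but note that the paper itself offers nothing to compare it with: Lemma \ref{lemma 1.6.10.12} is stated with the single line ``For the proof we refer to \cite{G12}'' (a reference in preparation), so you have in effect supplied the missing proof. Your route --- pathwise Gr\"onwall on each piece of a partition of $[0,T]$, BDG and Young on each piece, absorption made possible because the dangerous coefficient is $\delta^{p/2}+\varepsilon$ rather than $T^{p/2}$, then a finite linear recursion over the $K$ subintervals and Fatou after localization --- is sound for every $p>0$, since the continuous-martingale form of BDG and the subadditivity $(\max_i X_i)^p\leq\sum_i X_i^p$ hold in the whole range. You correctly identify the quadratic term $y_t^2$ in $d\<m\>_t$ as the obstruction to single-pass absorption, and the localization by $\tau_n$ does make $\sup_{t\leq\tau_n}y_t\leq n$ (the increments of $y$ are dominated by continuous processes, so there is no upward jump at $\tau_n$), which legitimizes the absorption step. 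The published proofs of this type of stochastic Gr\"onwall lemma (Lemma 3.7 of \cite{GS} and its refinements) typically proceed differently: they first close the estimate for $p\geq2$ by a single absorption after an interpolation of the cross terms, and then descend to small $p$ via a Lenglart-type domination argument or a lemma in the spirit of Lemma 3.2 of \cite{GK2003}, which the present paper invokes elsewhere for exactly this purpose; your time-subdivision treats all $p>0$ uniformly at the price of a constant growing like $(2C_1)^{K}$. Two cosmetic points: your constants $C_j$ depend on $T$ as well as on $p$ and $N$ (through $e^{pN\delta}\leq e^{pNT}$), which is harmless since the final constant is allowed to depend on $T$; and the opening global Gr\"onwall display should formally be read after the localization, as you in fact arrange.
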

For the proof we refer to \cite{G12}. 

\mysection                                  
{Solvability of the finite difference scheme}       \label{section tools} 

In this section we study the finite difference scheme 
\eqref{scheme}-\eqref{schemeini} in the whole $\bR^d$ 
 instead of $\bG_h$, i.e., we consider

\begin{align}                         \label{wholescheme}
du_t(x)=&(L^h_tu_t(x)+f_t(x))\,dt\nonumber\\
&+(M^{h,r}_tu_t(x)+g^{r}_t(x))\,dw^{r}_t
\quad
\text{for $(t,x)\in H_T$}, 
\end{align}
with initial condition 
\begin{equation}                         \label{wholeschemeini}
u_0(x)=\psi(x), \quad\text{for $x\in \bR^d$}, 
\end{equation}
where $L^h$ and $M^{h,r}$ are defined in 
\eqref{1.26.10.12} and \eqref{2.26.10.12}. 

Let, as before, $\frak m$ be a nonnegative 
integer, $K\geq0$ be a constant, and 
make the following assumptions, 
which are somewhat weaker 
than those of \ref{assumption 1.15.9.12} 
and \ref{assumption 2.15.9.12}.

\begin{assumption}                \label{assumption 1.25.10.12}     
The functions $\fra^{\lambda\mu}$, 
$\frb^{\lambda}$,  
$\frak p^{\gamma}$ and $\frak q^{\gamma}$
and their partial derivatives in $x\in\bR^d$ 
up to order $\frak m$   
are 
continuous in $x$ 
and are bounded in magnitude by 
$K$, for all $\lambda$, $\mu\in\Lambda_1$ 
and $\gamma\in\Lambda_0$. 
\end{assumption}

\begin{assumption}                  \label{assumption 2.25.10.12}                                                 
The initial value $\psi$ 
is an $H^{\frak m}$-valued 
$\cF_0$-measurable 
random variable, and 
$f=(f_t)_{t\geq0}$ and 
$g=(g_t)_{t\geq0}$ are  
predictable processes with values in 
$H^{\frak m}$ and $H^{\frak m}(l_2)$, 
respectively, such that almost surely  
\begin{equation}                           \label{4.25.10.12}
\int_0^T|f_t|_{\frak m}^2
+|g_t|^2_{\frak m}\,dt<\infty.
\end{equation} 
\end{assumption}

\begin{definition}               \label{definition 1.25.10.12}
An $H^0$-valued continuous and adapted process 
$(\bar u_t^h)_{t\in[0,T]}$ is called a solution 
to \eqref{wholescheme}-\eqref{wholeschemeini} if 
almost surely
\begin{equation}                  \label{1.30.10.12}
(\bar u_t^h,\varphi)=(\psi,\varphi)
+\int_0^t(L^h_s\bar u_s^h+f_s,\varphi)\,ds
+\int_0^t(M^{h,r}_s\bar u_s^h+g^r_s,\varphi)\,dw^r_s
\end{equation}
for all $t\in[0,T]$ 
and $\varphi\in C_0^{\infty}(\bR^d)$. 
\end{definition}

Note that due to 
Assumption \ref{assumption 1.25.10.12} 
for each $h\neq0$ we have a constant $C$ such that 
\begin{equation}                     \label{1.26.10.13}
|L^h_t\phi|^2_{\frak l}
+\sum_{r=1}^{\infty}|M^{h,r}_t\phi|^2_{\frak l}
\leq C^2|\phi|^2_{\frak l}
\quad 
\text{for all $\phi\in H^{l}$}, 
\end{equation}
for all $t\in[0,T]$ 
and integer $\frak l\in[0,\frak m]$. 
Thus $L^h_t$ and $M^h_t=(M_t^{h,r})_{r=1}^{\infty}$ 
are bounded linear operators from $H^l$ to 
$H^l$ and to $H^l(l_2)$, 
respectively, 
such that their operator norm is bounded by 
$C$ for all $t\in[0,T]$ and $\omega\in\Omega$.  
Using this with $l=0$, 
\eqref{4.25.10.12} with $\frak m=0$ 
and that $\psi$ is 
a $\cF_0$ measurable random variable in $H^0$, 
we can see that 
$\bar u$ is a solution to 
\eqref{wholescheme}-\eqref{wholeschemeini} 
if and only if it is 
the solution of the SDE 
\begin{equation}                          \label{6.25.10.12}
\bar u_t^h=\psi+\int_0^t(L^h_s\bar u_s^h+f_s)\,ds+
\int_0^t(M^{h,r}_s\bar u_s^h+g^{r}_s)\,dw^r_s, 
\quad t\in[0,T], 
\end{equation}
in the Hilbert space $H^0$, where 
the first integral is a Bochner 
integral and the second one is a stochastic 
integral in a Hilbert 
space. Thus by a well-known theorem on SDEs 
in Hilbert spaces, 
with Lipschitz continuous coefficients, problem 
\eqref{wholescheme}-\eqref{wholeschemeini} admits a 
unique solution $\bar u^h$. 
Due to $\psi\in H^{\frak m}$, 
\eqref{4.25.10.12} and \eqref{1.26.10.13} 
with $l=\frak m$, equation \eqref{6.25.10.12} admits 
a unique continuous $H^{\frak m}$-valued solution 
by virtue of the same theorem on solvability of 
SDEs in Hilbert spaces with Lipschitz continuous 
coefficients. Consequently, $\bar u^h$ is a 
continuous $H^{\frak m}$-valued process. 
If $\frak m>d/2$ then 
by Sobolev's theorem on embedding 
$H^{\frak m}$ into $C_b(\bR^d)$, the space of continuous 
and bounded functions on $\bR^d$,  there exists 
a linear operator $J:H^{\frak m}\to C_b$  
such that $J\varphi(x)=\varphi(x)$ for almost every 
$x\in\bR^d$,  and 
$$
\sup_{\bR^d}|J\varphi|\leq N|\varphi|_{\frak m}
$$ 
for all $\varphi\in H^{\frak m}$, where $N$ 
is a constant depending only on $d$. 
One  
has also the following  lemma on the embedding 
 $H^{\frak m}\subset l_2(\mathbb G_h)$.   
 
\begin{lemma}                            \label{lemma beagyazas}                                                                     \label{lemma 2.27.4.9}
For $H^{\frak m}$, $\frak m>d/2$, we have  
$h\in(0,1)$
\begin{equation}                        \label{7.27.4.9}   
\sum_{x\in\mathbb G_h}|J\varphi(x)|^2h^d
\leq N |\varphi|_{l}^2, 
\end{equation}
where $N$ is a constant depending only on $d$. 
\end{lemma}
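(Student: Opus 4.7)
The plan is to reduce the claim to a rescaled Sobolev embedding on a single small cube, applied around each grid point, and then sum using disjointness. First, I would choose a constant $c_0>0$, depending on the geometry of $\Lambda_1$ (hence ultimately on $d$), small enough that the open cubes
\[
Q_x:=x+(-c_0h,c_0h)^d,\qquad x\in\bG_h,
\]
are pairwise disjoint. This uses that the minimum separation between distinct elements of $\bG_h$ is of order $h$; this is where some mild geometric hypothesis on $\Lambda_1$ (spanning $\bR^d$ with rational/discrete commensurability) is needed, but it affects only the constant.

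Next I would apply the classical Sobolev embedding $H^{\frak m}(Q)\hookrightarrow C_b(\bar Q)$ on the unit cube $Q=(-1,1)^d$, which holds since $\frak m>d/2$, and rescale. A change of variable $y=x+c_0h z$ with $z\in Q$ gives, for every $x\in\bG_h$,
\[
|J\varphi(x)|^{2}\le N\sum_{|\alpha|\le\frak m}(c_0h)^{2|\alpha|-d}\int_{Q_x}|D^{\alpha}\varphi(y)|^{2}\,dy,
\]
with $N$ depending only on $d$. Since $h\in(0,1)$ and $|\alpha|\ge 0$, we have $(c_0h)^{2|\alpha|-d}\le c_0^{-d}h^{-d}$, so multiplying by $h^{d}$ yields
\[
h^{d}|J\varphi(x)|^{2}\le N'\sum_{|\alpha|\le\frak m}\int_{Q_x}|D^{\alpha}\varphi(y)|^{2}\,dy.
\]

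Finally, summing over $x\in\bG_h$ and invoking the disjointness of the family $\{Q_x\}$ collapses the right-hand side to an integral over $\bigcup_x Q_x\subset\bR^{d}$, giving
\[
\sum_{x\in\bG_h}|J\varphi(x)|^{2}h^{d}\le N'\sum_{|\alpha|\le\frak m}\int_{\bR^{d}}|D^{\alpha}\varphi|^{2}\,dy = N'|\varphi|_{\frak m}^{2},
\]
which is the desired estimate \eqref{7.27.4.9}.

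The only delicate point is the disjointness step: one must verify that the generating set $\Lambda_1$ gives a grid $\bG_h$ whose minimum point separation is bounded below by a positive multiple of $h$, so that $c_0$ can be chosen independently of $h$. In the intended setting (where $\Lambda_1$ spans $\bR^d$ and the group it generates is discrete), this is routine; in that case $c_0$ depends only on $d$ and the fixed geometry of $\Lambda_1$. No new analytic ingredient beyond scaled Sobolev embedding is required.
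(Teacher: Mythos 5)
Your argument is correct and is exactly the standard localized-and-rescaled Sobolev embedding argument that the paper invokes: its proof consists of one sentence citing Sobolev's theorem on the unit ball and referring to \cite{GK4} for details, which are precisely the ones you supply (including the correct observation that the worst power is $(c_0h)^{-d}$ from $|\alpha|=0$, and that the grid must have minimum separation of order $h$ for the cubes to be disjoint). No substantive difference in approach; you have merely written out what the paper leaves implicit.
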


\begin{proof} 
This lemma is a straightforward 
consequence of Sobolev's theorem 
on embedding $W^{\frak m}_2$ functions 
on the unit ball $B_1$ of $\bR^d$ into 
$C(B_1)$, the space of continuous functions on $B_1$. 
(See, e.g., \cite{GK4}.)
\end{proof}

If $\frak m>d/2$ then by virtue of the above lemma 
the solution $(J\bar u^h_t)_{t\in[0,T]}$ of 
\eqref{wholescheme}-\eqref{wholeschemeini} 
restricted to $\bG_h$ in $x$ is a continuous 
$l_{h,2}$-valued process. 
Thus we have the following proposition. 
Remember that when $f$ and $g$ are $H^{\frak m}$ 
and $H^{\frak m}(l_2)$-valued processes 
with $\frak m>d/2$, and $\psi\in H^{\frak m}$, 
then we always take 
their continuous modifications in $x$, i.e., 
we take $\hat f=Jf$, $\hat g=Jg$ and $\hat\psi=J\psi$ 
in place of $f$, $g$ 
and $\psi$, respectively. 

\begin{proposition}           \label{proposition 1.26.10.12}
Let Assumptions \ref{assumption 1.25.10.12} 
and \ref{assumption 2.25.10.12} hold. Then 
\eqref{wholescheme}-\eqref{wholeschemeini} has a 
unique solution $\bar u^h=(\bar u^h_t)_{t\in[0,T]}$ 
for each $h\neq0$ 
in the sense of Definition \ref{definition 1.25.10.12}. 
Moreover, $\bar u^h$ is a continuous $H^{\frak m}$-valued 
process. If $\frak m>d/2$ then $\hat u^h:=J\bar u^h$, 
the continuous modification in $x$ of $\bar u^h$, 
restricted to $\bG_h$ is $u^h$, 
the continuous $l_{h,2}$-valued 
solution of  \eqref{scheme}-\eqref{schemeini}.  
\end{proposition}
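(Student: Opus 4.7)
The plan is to reduce \eqref{wholescheme}-\eqref{wholeschemeini} to a stochastic differential equation in the Hilbert space $H^0$ with globally Lipschitz coefficients, apply the standard well-posedness theorem to obtain a continuous $H^0$-valued solution, bootstrap the regularity to $H^{\frak m}$ by re-running the same theorem in the smaller space, and finally---when $\frak m>d/2$---use Sobolev embedding together with Lemma \ref{lemma beagyazas} to restrict the continuous representative to the grid $\bG_h$ and identify it with $u^h$.

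First I would verify the bounded-operator estimates \eqref{1.26.10.13}. Since $L^h_t$ is a finite linear combination of shift operators composed with multiplication by the bounded coefficients $\fra^{\lambda\mu}$, $\frak p^\gamma$, $\frak q^\gamma$, and analogously for $M^{h,r}_t$ with the $l_2$-valued $\frb^\lambda$, the estimate with $\frak l=0$ is immediate from Assumption \ref{assumption 1.25.10.12}; for $\frak l\in[1,\frak m]$ one applies the discrete Leibniz rule of Lemma \ref{lemma 1.3.5.12} and uses that the partial derivatives of the coefficients up to order $\frak m$ are bounded by $K$. Consequently the affine maps $\varphi\mapsto L^h_t\varphi+f_t$ and $\varphi\mapsto M^h_t\varphi+g_t$ are globally Lipschitz on each $H^{\frak l}$, $\frak l\in[0,\frak m]$, uniformly in $(\omega,t)$.

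Next I would establish the equivalence between Definition \ref{definition 1.25.10.12} and the $H^0$-valued SDE \eqref{6.25.10.12}. Testing \eqref{1.30.10.12} against $\varphi\in C_0^\infty$ and using density in $H^0$ together with \eqref{1.26.10.13} and the Bochner/It\^o integrability furnished by Assumption \ref{assumption 2.25.10.12} with $\frak m=0$, the weak formulation is equivalent to the pointwise $H^0$-identity \eqref{6.25.10.12}. Existence and uniqueness of a continuous $H^0$-valued solution $\bar u^h$ then follow from the standard theorem on SDEs in Hilbert spaces with Lipschitz coefficients. Reapplying the same theorem in $H^{\frak m}$---permitted because \eqref{1.26.10.13} holds also with $\frak l=\frak m$ and $\psi$, $f$, $g$ carry the required regularity---produces a continuous $H^{\frak m}$-valued solution, which by uniqueness in $H^0$ must coincide with $\bar u^h$.

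For the final assertion, assume $\frak m>d/2$. Sobolev embedding supplies the continuous representative map $J:H^{\frak m}\to C_b$, and Lemma \ref{lemma beagyazas} shows that its restriction to $\bG_h$ is a bounded linear map into $l_{h,2}$; hence $\hat u^h:=J\bar u^h$ is a continuous $l_{h,2}$-valued process. Since the shift operators $T_{h,\lambda}$, and therefore the finite differences $\delta_{h,\lambda}$ and $\delta^h_\lambda$, commute with passage to the continuous representative, evaluating the $H^{\frak m}$-valued identity \eqref{6.25.10.12} pointwise at $x\in\bG_h$ reproduces the finite-difference scheme \eqref{scheme}-\eqref{schemeini}. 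Uniqueness in $l_{h,2}$, already noted in Remark \ref{remark 4.13.10.14}, then identifies $\hat u^h|_{\bG_h}$ with $u^h$. The only real care is in justifying the passage from the weak formulation to the $H^0$-SDE and in verifying that the continuous representative commutes with the finite differences so that its pointwise values on the grid genuinely satisfy \eqref{scheme}; the remaining steps are direct appeals to well-established Hilbert-space SDE theory.
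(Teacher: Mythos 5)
Your proposal is correct and follows essentially the same route as the paper: reduction to the $H^0$-valued SDE \eqref{6.25.10.12} with Lipschitz coefficients via \eqref{1.26.10.13}, re-application of the same Hilbert-space theorem in $H^{\frak m}$ to get the regularity, and then Sobolev embedding plus Lemma \ref{lemma beagyazas} and $l_{h,2}$-uniqueness for the grid identification. The only cosmetic difference is in the last step, where the paper extracts the pointwise equation at $x\in\bG_h$ by testing the weak formulation against a mollifier sequence $\varphi_n(y)=n^d\varphi(n(y-x))$ and passing to the limit, whereas you apply the (continuous, since $\frak m>d/2$) point-evaluation functional directly to the strong $H^{\frak m}$-valued identity; both devices are legitimate and yield the same conclusion.
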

\begin{proof}
Except of the last statement we have already proved 
this proposition. To prove the last statement 
assume $\frak m>d/2$. Fix a point $x$ of $\bG_h$ 
and take a nonnegative smooth function $\varphi$ with 
compact support in $\bR^d$ whose integral over $\bR^d$ 
is one. Define for each integer $n\geq1$ the function 
$\varphi_n$ by $\varphi_n(y)=n^{d}\varphi(n(y-x))$ 
for $y\in\bR^d$. 
Then by virtue of Definition \ref{definition 1.25.10.12} 
we have almost surely 
$$
(\hat u_t^h,\varphi_n)=(\hat\psi,\varphi_n)
+\int_0^t(L^h_t\hat u_t^h+\hat f_t,\varphi_n)\,dt
+\int_0^t(M^{h,r}_t\hat u_t^h+\hat g^r_t,\varphi_n)\,dw^r_t
$$
for all $t\in[0,T]$ and for all $n\geq1$. 
Letting here $n\to\infty$ we obtain 
$$
\hat u_t^h(x)=\hat\psi(x)
+\int_0^t(L^h_t\hat u_t^h(x)+\hat f_t(x))\,dt
+\int_0^t(M^{h,r}_t\hat u_t^h(x)+\hat g^r_t(x))\,dw^r_t
$$
almost surely for each $t\in[0,T]$, where almost surely 
the right-hand side is continuous in $t\in[0,T]$. 
Since $\hat u^h_t(x)$ is also continuous in $t$, we have this 
equality almost surely for all $t\in[0,T]$ 
and all $x\in\bG_h$. Moreover, 
we know that $(\hat u^h)_{t\in[0,T]}$ is a continuous and 
adapted $l_2$-valued process, 
and that the $l_2$-valued solution $u^h$ 
of \eqref{scheme}-\eqref{schemeini} is unique. 
Hence $\hat u^h=u^h$.  
\end{proof}
\medskip

Our aim now is to obtain an estimate for the 
solution $\bar u^h$ to 
\eqref{wholescheme}-\eqref{wholeschemeini} 
independently of $h$. 
To this end first 
for $u\in H^{\frak m}$, $f\in H^{\frak m}$ and 
$g=(g^{r})_{r=1}^{\infty}\in H^{\frak m+1}(l_2)$ 
we set 
$$
Q_{\frak m}^h(u,f,g,t)
=\int_{\bR^d}\sum_{\alpha\in\Lambda^{\frak m}}
2(\delta_{\alpha}^hu)\delta_{\alpha}^h(L^h_tu(x)+f(x))\,dx
$$
\begin{equation}                                \label{2.5.10.12}
+\int_{\bR^d}\sum_{\alpha\in\Lambda^{\frak m}}
\sum_{r}|\delta_{\alpha}^h(M^{h,r}_tu(x)+g^{r}(x))|^2\,dx,    
\end{equation}
and prove the following lemma. 

\begin{lemma}                                           \label{lemma 1.7.5.12}
Let Assumptions \ref{assumption 1.15.9.12}  
and 
\ref{assumption 3.7.5.12} hold. 
Then for each $h>0$ for $P\otimes dt$-almost all 
$(\omega,t)\in\Omega\times[0,T]$ we have 
\begin{equation}                                         \label{5.7.5.12}
Q_{\frak m}^h(u,f,g,t)\leq N(|u|_{\frak m}^2
+|f|_{\frak m}^2+|g|^2_{\frak m+1})
\end{equation}
for all $u,f\in H^{\frak m}$ and  
$g\in H^{\frak m+1}(l_2)$, where $N$ 
is a constant depending only on $d$, $\frak m$, 
$K$ and $|\Lambda|$. 
If in addition to the above assumptions 
$\frak p^{\lambda}=\frak q^{\lambda}=0$ 
for all $\lambda\in\Lambda_0$, then \eqref{5.7.5.12} 
holds for each 
$h\in\bR\setminus\{0\}$ 
for $P\otimes dt$-almost every $(\omega,t)$.  
\end{lemma}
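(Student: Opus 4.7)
The plan is to decompose $Q_{\frak m}^h$ into three pieces: (a) the free-term contributions involving $f$ and $g$; (b) the first-order part of $L^h$ carried by $\frak p^\gamma,\frak q^\gamma$; (c) the second-order quadratic form built from $\fra^{\lambda\mu}$ and $\frb^{\lambda,r}$. The first two are routine, and the entire subtlety of the statement lives in~(c).

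For~(a), Cauchy--Schwarz immediately controls $2(\delta_\alpha^h u,\delta_\alpha^h f)\leq |u|_{\frak m}^2+|f|_{\frak m}^2$ and the pure square $|\delta_\alpha^h g^r|_0^2\leq|g|_{\frak m}^2$. The cross term $2\sum_r(\delta_\alpha^h(M^{h,r}_tu),\delta_\alpha^h g^r)$ is first subjected to one discrete integration by parts moving a $\delta_\lambda^h$ from $u$ to $g^r$ (via $(\delta_\lambda^h)^\ast=-\delta_\lambda^h$), after which Cauchy--Schwarz produces exactly the $|g|^2_{\frak m+1}$ factor appearing on the right of~\eqref{5.7.5.12}. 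For~(b), after the rewriting $-\frak q^\gamma\delta_{-h,\gamma}=\frak q^\gamma\delta_{h,-\gamma}$, both summands fall directly under Lemma~\ref{lemma 1.4.10.12}, which delivers a bound by $N|u|_{\frak m}^2$ using the sign assumptions $\frak p,\frak q\geq 0$ together with $h>0$. This is the sole place where the sign of $h$ enters, so when $\frak p=\frak q\equiv 0$ the whole group disappears and the argument proceeds verbatim for any $h\neq 0$, giving the second assertion.

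The core task is then to bound, for each $\alpha\in\Lambda^{\frak m}$, the combination
$$
\mathcal T_\alpha:=2(\delta^h_\alpha u,\delta^h_\alpha(\fra^{\lambda\mu}\delta^h_\lambda\delta^h_\mu u))+\sum_r|\delta^h_\alpha(\frb^{\lambda,r}\delta^h_\lambda u)|^2_0,\qquad \lambda,\mu\in\Lambda_1.
$$
I would apply Lemma~\ref{lemma 1.3.5.12} to split each term into its \emph{principal} piece (every $\delta^h_\alpha$ landing on $u$, the coefficient only shifted by $I^h_\alpha$) and a \emph{commutator} remainder (at least one $\delta^h_\alpha$ hitting a coefficient). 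A single discrete integration by parts via Lemma~\ref{lemma 6.7.5.12}, followed by symmetrization in $(\lambda,\mu)$, reduces the principal piece to
$$
-2\int\bigl(I_\lambda^h I_\alpha^h\fra^{\lambda\mu}-\tfrac{1}{2}(I_\alpha^h\frb^{\lambda,r})(I_\alpha^h\frb^{\mu,r})\bigr)(\delta_\lambda^h\delta_\alpha^h u)(\delta_\mu^h\delta_\alpha^h u)\,dx+O(|u|^2_{\frak m}).
$$
Using the expansions $I_\lambda^h I_\alpha^h=I+h(\cdot)$ and $I_\alpha^h\frb=\frb+h(\cdot)$ from~\eqref{2.8.5.12}, the integrand equals the stochastic parabolicity form $-2(\fra^{\lambda\mu}-\tfrac{1}{2}\frb^{\lambda,r}\frb^{\mu,r})(\delta_\lambda^h\delta_\alpha^h u)(\delta_\mu^h\delta_\alpha^h u)$, which is $\leq 0$ by Assumption~\ref{assumption 3.7.5.12}(i), plus a remainder $hE_\alpha$ that is $h$ times a bilinear form in $\delta^h_\lambda\delta^h_\alpha u$ with bounded coefficients.

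The main obstacle is that $hE_\alpha$ is a priori of size $|u|_{\frak m+1}^2$, which is not on the right-hand side of~\eqref{5.7.5.12}. This is where the absence of the smooth factorization is bypassed through Lemma~\ref{lemma 1.15.9.12}: the matrix $\tilde\fra^{\lambda\mu}:=\fra^{\lambda\mu}-\tfrac{1}{2}\frb^{\lambda,r}\frb^{\mu,r}$ indexed by $\lambda,\mu\in\Lambda_0$ is nonnegative definite with bounded second derivatives under Assumption~\ref{assumption 1.15.9.12}, hence admits a Lipschitz square root, and Lemma~\ref{lemma 1.15.9.12}(ii) then yields the pointwise domination of $|\delta_{h,\gamma}\tilde\fra^{\lambda\mu}V^\lambda V^\mu|^2$ by $C\,\tilde\fra^{\lambda\mu}V^\lambda V^\mu\cdot|V|^2+Ch^2|V|^4$ for every vector $V=(V^\lambda)_{\lambda\in\Lambda_0}$. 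Applied with $V^\lambda=\delta^h_\lambda\delta^h_\alpha u$ and Young's inequality, $hE_\alpha$ splits as $\varepsilon\cdot(\text{parabolicity form})+C_\varepsilon|u|^2_{\frak m}$, the first piece being absorbed into the nonpositive principal term and the $h^2|V|^4$ residue controlled via $h|\delta_{h,\lambda}w|_0\leq 2|w|_0$. The commutator remainders from the Leibniz expansion are dispatched by Lemma~\ref{lemma 6.7.5.12}, Corollary~\ref{corollary 1.6.10.12}, and Lemma~\ref{lemma 1.5.10.12}, each contributing at most $N|u|^2_{\frak m}$. Assembling the pieces delivers~\eqref{5.7.5.12} with $N=N(K,d,\frak m,|\Lambda|)$; as the sign of $h$ was used only in group~(b), the conclusion holds for any $h\neq 0$ when $\frak p=\frak q\equiv 0$.
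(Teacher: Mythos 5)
Your overall architecture matches the paper's: a Leibniz splitting, the $\frak p,\frak q$ part handled by Lemma \ref{lemma 1.4.10.12} (the only place $h>0$ enters), a principal quadratic form made nonpositive by Assumption \ref{assumption 3.7.5.12}(i), and an Oleinik--Radkevich estimate on $\hat\fra^{\lambda\mu}=\fra^{\lambda\mu}-\tfrac12\frb^{\lambda,r}\frb^{\mu,r}$ absorbed by Young's inequality. The gap is that you apply Lemma \ref{lemma 1.15.9.12} to a term that does not need it, while the term that genuinely needs it is the one you declare routine. The discrepancy between shifted and unshifted coefficients in the principal piece is harmless without any parabolicity input: by \eqref{2.8.5.12} one has $I^h_\alpha=I+h^2\mathcal O_\alpha$ and $I^h_\lambda=I+\tfrac{h^2}{2}\Delta^h_\lambda$, so your remainder carries a factor $h^2$ against the two extra differences of $u$, and since $h\delta^h_\lambda=R_\lambda$ is an $L_2$-contraction the whole term collapses to a bounded bilinear form in $R_\lambda R_\mu u_\alpha$, i.e.\ $O(|u|^2_{\frak m})$, using only that the coefficients have two bounded derivatives. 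Your ``$hE_\alpha$'' is really $h^2\tilde E_\alpha$ and never threatens $|u|^2_{\frak m+1}$.

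The dangerous term is the first-order commutator you dismiss in one sentence: for $\gamma\leq\alpha$, $|\gamma|=1$, $\beta=\alpha\setminus\gamma$, the drift contribution $2(I_\gamma u_\alpha)(\delta_\gamma\fra^{\lambda\mu})\delta_\lambda\delta_\mu u_{\beta}$ and the matching cross term in $|\delta_\alpha(\frb^{\lambda,r}\delta_\lambda u)|^2$ pair $\frak m$ differences of $u$ against $\frak m+1$ differences of $u$, hence are of size $|u|_{\frak m}|u|_{\frak m+1}$. They are \emph{not} dispatched by Lemma \ref{lemma 6.7.5.12}, Corollary \ref{corollary 1.6.10.12} or Lemma \ref{lemma 1.5.10.12}: none of these controls a term in which two uncompensated differences beyond order $\frak m$ survive, and discrete integration by parts merely moves them around. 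This is precisely where the paper combines the two commutators into $2\sum_{\gamma}(I_\gamma u_\alpha)\hat\fra^{\lambda\mu}_{\gamma}u_{\beta\lambda\mu}$, applies Young's inequality, and invokes Lemma \ref{lemma 1.15.9.12} with the \emph{matrix} $V^{\lambda\mu}=u_{\beta\lambda\mu}$ (not a vector $V^{\lambda}=\delta^h_\lambda\delta^h_\alpha u$) to dominate $\big|\hat\fra^{\lambda\mu}_{\gamma}u_{\beta\lambda\mu}\big|^2$ by $N\hat\fra^{\lambda\mu}u_{\beta\eta\lambda}u_{\beta\eta\mu}$ plus $h^2$-terms killed by the contraction $R_\mu$; only after summing over $\alpha\in\Lambda^{\frak m}$ can the $\varepsilon$-piece be absorbed into the negative principal form. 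Your observation that $\hat\fra$ is nonnegative with bounded second derivatives and therefore has a Lipschitz square root is the correct justification for invoking Lemma \ref{lemma 1.15.9.12}, but as written your argument leaves the one genuinely nontrivial term of the lemma unestimated.
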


\begin{proof} 
For real functions $v$ and $w$ on $\bR^d$ we write 
$v\sim w$ if their integrals over $\bR^d$ are equal. We write $v\preceq w$ 
if $v=w+F$ for a function $F$ on $\bR^d$ such that the integral of $F$ over 
$\bR^d$ can be estimated 
by the right-hand side of \eqref{5.7.5.12}. 
We use the notation $v_{\lambda}=\delta_{\lambda}v$ for functions $v$ on $\bR^d$ 
and for $\lambda\in\Lambda^k$, 
$k\geq1$.  To simplify the notation we often write $\delta_{\alpha}$, 
$\Delta_{\lambda}$ and 
$I_{\alpha}$ in place of  $\delta_{\alpha}^h$,  $\delta_{\lambda}^h$ and 
$I_{\alpha}^h$, respectively. 
Moreover, we often use the convention 
that if there is no parenthesis 
to indicate the order of operations in a formula then 
operators act 
only on the first function written after them. 
For example, $\fra \delta_{\mu}u I_{\lambda}fu$ means 
$\fra (\delta_{\mu} u)(I_{\lambda}f)u$. 

Assume first that $\frak p^{\lambda}=\frak q^{\lambda}=0$ 
for all $\lambda\in\Lambda_0$.  
Consider the case $m=0$. 
Notice that for $\lambda,\mu\in\Lambda_0$ 
we have 
$$
u\fra^{\lambda\mu}\delta_{\lambda}\delta_{\mu}u
\sim-I_{\lambda}u\fra_{\lambda}^{\lambda\mu}
u_{\mu}-u_{\lambda}I_{\lambda}\fra^{\lambda\mu}u_{\mu}. 
$$
By Lemma \ref{lemma 6.7.5.12} we get 
$-I_{\lambda}u\fra_{\lambda}^{\lambda\mu}u_{\mu}\preceq 0$,  
and using $I_{\lambda}=I+h^2\Delta_{\lambda}/2$ we have 
$$
-u_{\lambda}I_{\lambda}\fra^{\lambda\mu}u_{\mu}
=-u_{\lambda}\fra^{\lambda\mu}u_{\mu}
-\tfrac{1}{2}T_{\lambda}u(\Delta_{\lambda}\fra^{\lambda\mu})
T_{\mu}u\preceq 
-u_{\lambda}\fra^{\lambda\mu}u_{\mu}. 
$$
Using Lemma \ref{lemma 6.7.5.12} for $\lambda\in\Lambda_0$ we have 
$
ua^{\lambda0}u_{\lambda}\preceq 0. 
$
Hence we have  
\begin{equation}                                   \label{7.7.5.12}
2u\sum_{\lambda,\mu\in\Lambda_1}
\fra^{\lambda\mu}\delta_{\lambda}\delta_{\mu}u
\preceq 
-2\sum_{\lambda,\mu\in\Lambda_0}
u_{\lambda}\fra^{\lambda\mu}u_{\mu}. 
\end{equation}
Notice that by Lemma \ref{lemma 6.7.5.12} we have  
$\frb^{\lambda r}u_{\lambda}\frb^{0 r}u\preceq 0$, 
and 
$$
\frb^{\lambda r}u_{\lambda}g^{r}
\sim-\frb^{\lambda r}_{\lambda}(I_{\lambda}g^{r})u
-(I_{\lambda}b^{\lambda r})g^{\lambda r}_{\lambda}u
\preceq 0.
$$
Hence
$$
\sum_{r}|M^{h,r}u+g^{r}|^2\preceq 
\sum_{\lambda,\mu\in\Lambda_0}
\sum_{r}\frb^{\lambda r}b^{\mu r}u_{\lambda}u_{\mu}, 
$$
which together with \eqref{7.7.5.12} 
gives  
$$
2u(L^hu+f)+\sum_{r}|M^{h,r}u+g^{r}|^2
\preceq-\sum_{\lambda,\mu\in\Lambda_0}
(2\fra^{\lambda\mu}-\sum_{r}\frb^{\lambda r}
\frb^{\mu r})u_{\lambda}u_{\mu}\leq 0
$$
by virtue of Assumption \ref{assumption 3.7.5.12}. 
This proves Lemma \ref{lemma 1.7.5.12} 
for $\frak m=0$. 

Consider now the case $\frak m\geq 1$. 
Let $\alpha\in\Lambda^{\frak m}$.  
Then 
by Lemmas \ref{lemma 1.3.5.12} and \ref{lemma 6.7.5.12} 
for $\lambda\in\Lambda_0$ 
we have 
$$
u_{\alpha}\delta_{\alpha}(\fra^{\lambda0}u_{\lambda})
\preceq u_{\alpha}(I_{\alpha}\fra^{\lambda0})
\delta_{\lambda}u_{\alpha}\preceq 0.
$$
Let $\lambda,\mu\in\Lambda_0$. 
Then using Lemma \ref{lemma 1.3.5.12} we have 
\begin{align*}                                                                                                   \label{8.8.5.12}
u_{\alpha}\delta_{\alpha}
(\fra^{\lambda\mu}\delta_{\lambda}\delta_{\mu}u)
\preceq& 
u_{\alpha}\sum_{\gamma\leq \alpha,|\gamma|=1}
(I_{\alpha\setminus\gamma}
\fra^{\lambda\mu}_{\gamma})
I_{\gamma}\delta_{\alpha\setminus\gamma}
\delta_{\lambda}\delta_{\mu}u    \nonumber\\
&+ u_{\alpha}
(I_{\alpha}
\fra^{\lambda\mu})
\delta_{\lambda}\delta_{\mu}u_{\alpha}. 
\end{align*}
By virtue of \eqref{2.8.5.12} 
we get 
\begin{align*}                                   
(I_{\alpha}
\fra^{\lambda\mu})\delta_{\lambda}\delta_{\mu}u_{\alpha}
=&
\fra^{\lambda\mu}\delta_{\lambda}\delta_{\mu}u_{\alpha}
+h^2(\mathcal O_{\alpha}\fra^{\lambda\mu})
\delta_{\lambda}\delta_{\mu}u_{\alpha}                     \\
=&
\fra^{\lambda\mu}\delta_{\lambda}\delta_{\mu}u_{\alpha}
+(\mathcal O_{\alpha}\fra^{\lambda\mu})
R_{\lambda}R_{\mu}u_{\alpha}. 
\end{align*}
Thus, using also Lemma \ref{lemma 6.7.5.12}, we obtain  
\begin{align*}
 u_{\alpha}
(I_{\alpha}
\fra^{\lambda\mu})\delta_{\lambda}\delta_{\mu}u_{\alpha}
&\preceq
u_{\alpha}
\fra^{\lambda\mu}\delta_{\lambda}\delta_{\mu}u_{\alpha}
\sim-u_{\alpha\lambda}(I_{\lambda}\fra^{\lambda\mu})
u_{\alpha\mu}
-(I_{\lambda}u_{\alpha})\fra^{\lambda\mu}_{\lambda}\delta_{\mu}
u_{\alpha}\\
&\preceq -u_{\alpha\lambda}\fra^{\lambda\mu}
u_{\alpha\mu}. 
\end{align*}
For $|\gamma|=1$ and $|\beta|\leq m-1$ due to 
\eqref{2.8.5.12}   and \eqref{3.18.9.12}
we have 
$$
u_{\alpha}
(I_{\beta}
\fra^{\lambda\mu}_{\gamma})
I_{\gamma}\delta_{\beta}\delta_{\lambda}\delta_{\mu}u
= u_{\alpha}
\fra^{\lambda\mu}_{\gamma}
I_{\gamma}\delta_{\lambda}\delta_{\mu}u_{\beta}
+hu_{\alpha}\mathcal 
(\mathcal P_{\beta}\fra^{\lambda\mu}_{\gamma})I_{\gamma}\delta_{\lambda}\delta_{\mu}u_{\beta}
$$
$$
=u_{\alpha}
\fra^{\lambda\mu}_{\gamma}
I_{\gamma}\delta_{\lambda}\delta_{\mu}u_{\beta}
+u_{\alpha}(\mathcal P_{\beta}\fra_{\gamma}^{\lambda\mu})
I_{\gamma}R_{\lambda}\delta_{\mu}u_{\beta}
\preceq
u_{\alpha}
\fra^{\lambda\mu}_{\gamma}
I_{\gamma}\delta_{\lambda}\delta_{\mu}u_{\beta}
$$
$$
\sim(I_{\gamma}(u_{\alpha}
\fra^{\lambda\mu}_{\gamma}))
\delta_{\lambda}\delta_{\mu}u_{\beta} 
$$
$$
=(I_{\gamma}u_{\alpha})
\fra^{\lambda\mu}_{\gamma}
\delta_{\lambda}\delta_{\mu}u_{\beta} 
+
(I_{\gamma}u_{\alpha})
(P_{\gamma}\fra^{\lambda\mu}_{\gamma})
R_{\lambda}\delta_{\mu}u_{\beta}
+(R_{\gamma}u_{\alpha})
(\delta_{\gamma}\fra^{\lambda\mu}_{\gamma})
R_{\lambda}\delta_{\mu}u_{\beta}
$$
$$
\preceq(I_{\gamma}u_{\alpha})
\fra^{\lambda\mu}_{\gamma}
\delta_{\lambda}\delta_{\mu}u_{\beta}.
$$
Hence 
\begin{equation}                          \label{2.9.5.12}
A:=2\delta_{\alpha}^hu\delta_{\alpha}^h(L^hu+f)
\preceq A_1+A_2
\end{equation}
with 
$$
A_1=-2u_{\alpha\lambda}\fra^{\lambda\mu}
u_{\alpha\mu}, \quad 
A_2=
2\sum_{\gamma\leq\alpha,|\gamma|=1}
(I_{\gamma}u_{\alpha})\fra^{\lambda\mu}_{\gamma}
\delta_{\lambda}\delta_{\mu}u_{\alpha\setminus\gamma},  
$$
where the summation convention is used with respect 
to repeated $\lambda,\mu$ from $\Lambda_0$. 
By Lemma \ref{lemma 1.4.10.12} 
for $h>0$ we have 
\begin{equation}                                    \label{3.4.10.12}
(\delta_{\alpha}u,
\delta_{\alpha}(\frak p^{\lambda}\delta_{h,\lambda}u
-\frak q^{\lambda}\delta_{-h,\lambda}u)) 
\preceq0.
\end{equation}
Clearly
$$
|\delta_{\alpha}(
\frb^{\mu}\delta_{\mu}u+g)|^2_{l_2}
=|\delta_{\alpha}
(\frb^{\lambda}\delta_{\lambda}u)|^2_{l_2}
+|\delta_{\alpha}(g+\frb^0u)|^2_{l_2}
$$
$$
+2( \delta_{\alpha}
(\frb^{\lambda}\delta_{\lambda}u),
\delta_{\alpha}(g+\frb^0u))_{l_2}, 
$$
where repeated indices $\mu$ mean summation over 
$\mu\in\Lambda_1$ and repeated indices $\lambda$ mean summation 
over $\lambda\in\Lambda_0$. It is easy to see that 
$$
|\delta_{\alpha}(g+\frb^0u)|^2_{l_2}\preceq 0 , 
$$
$$
(\delta_{\alpha}
(\frb^{\lambda}\delta_{\lambda}u),
\delta_{\alpha}(g+\frb^0u))_{l_2}
\preceq
(\delta_{\alpha}
(\frb^{\lambda}\delta_{\lambda}u),
\delta_{\alpha}(\frb^0u))_{l_2}. 
$$
Using the Lemmas \ref{lemma 1.3.5.12} 
and \ref{lemma 6.7.5.12} 
we get 
$$
(\delta_{\alpha}
(\frb^{\lambda}\delta_{\lambda}u),
\delta_{\alpha}(\frb^0u))_{l_2}
\preceq 
((I_{\alpha}\frb^{\lambda})I_{\alpha}
(\delta_{\lambda}\frb^0)u_{\alpha},
u_{\alpha})_{l_2}\preceq0. 
$$
Hence 
\begin{equation}                             \label{8.2.10.12}
B:=|\delta_{\alpha}(
\frb^{\lambda}\delta_{\lambda}u+g)|^2_{l_2}
\preceq 
|\delta_{\alpha}
(\frb^{\lambda}\delta_{\lambda}u)|^2_{l_2}
=B_0+B_1+B_2
\end{equation}
with 
$$
B_0=|\delta_{\alpha}
(\frb^{\lambda}\delta_{\lambda}u)-
(I_{\alpha}\frb^{\lambda})\delta_{\lambda}u_{\alpha}|^2_{l_2}, 
\quad
B_1=|(I_{\alpha}\frb^{\lambda})\delta_{\lambda}u_{\alpha}|^2_{l_2}, 
$$
$$
B_2=2(\delta_{\alpha}(\frb^{\lambda}\delta_{\lambda}u)-
(I_{\alpha}\frb^{\lambda})
\delta_{\lambda}u_{\alpha},(I_{\alpha}\frb^{\lambda})
\delta_{\lambda}u_{\alpha})_{l_2}. 
$$
It is easy to notice that 
\begin{equation}                             \label{6.2.10.12}
B_0\preceq0,
\end{equation}
$$
B_1\preceq 
(I_{\alpha}\frb^{\lambda r})(I_{\alpha}\frb^{\mu r})
\delta_{\lambda}u_{\alpha}\delta_{\mu}u_{\alpha}, 
$$
\begin{equation}                               \label{11.2.10.12}
B_2\preceq2\sum_{\gamma}
(I_{\beta}\frb^{\lambda r}_{\gamma})(I_{\alpha}\frb^{\mu r})
 (I_{\gamma}\delta_{\lambda}u_{\beta})
\delta_{\mu}u_{\alpha},      
\end{equation}
where $\sum_{\gamma}$ in $B_2$ 
denotes summation over all sub-sequences $\gamma$ 
of $\alpha$ which have length $1$, 
and $\beta$ denotes $\alpha\setminus\gamma$. 
Here, and later on in the proof, 
$\lambda$ and $\mu$ are from $\Lambda_0$. 
By \eqref{2.8.5.12} we have 
$$
(I_{\alpha}\frb^{\lambda r})(I_{\alpha}\frb^{\mu r})
u_{\alpha\lambda}u_{\alpha\mu}
=  (\frb^{\lambda r}+h^2\mathcal O_{\alpha}\frb^{\lambda r})
(b^{\mu r}+h^2\mathcal O_{\alpha}\frb^{\mu r})
u_{\alpha\lambda}u_{\alpha\mu}                                     
$$
$$
= \frb^{\lambda r}\frb^{\mu r}
u_{\alpha\lambda}u_{\alpha\mu} 
+(\frb^{\lambda r}\mathcal O_{\alpha}\frb^{\mu r}
+\frb^{\mu r} 
O_{\alpha}\frb^{\lambda r})
(R_{\lambda}u_{\alpha})(R_{\mu}u_{\alpha})
$$
$$
+
(\mathcal P_{\alpha}b^{\mu r})(\mathcal P_{\alpha}\frb^{\mu r})
(R_{\lambda}u_{\alpha})(R_{\mu}u_{\alpha})  
\preceq  \frb^{\lambda r}b^{\mu r}
u_{\alpha\lambda}u_{\alpha\mu}.  
$$
Consequently, 
\begin{equation}                                             \label{3.2.10.12}
B_1
\preceq  
\sum_{\lambda,\mu\in\Lambda_0}\frb^{\lambda r}\frb^{\mu r}
u_{\alpha\lambda}u_{\alpha\mu}. 
\end{equation}
By virtue of \eqref{2.8.5.12} we have 
$$
(I_{\beta}\frb^{\lambda r}_{\gamma})
(I_{\alpha}\frb^{\mu r})
 (I_{\gamma}u_{\beta\lambda})u_{\alpha\mu}
$$
\begin{align}
=&((I+h\mathcal P_{\beta})\frb^{\lambda r}_{\gamma}))
((I+h\mathcal P_{\alpha})\frb^{\mu r})
 (I_{\gamma}u_{\beta\lambda})
u_{\alpha\mu}                                             \nonumber\\ 
=&
\frb^{\lambda r}_{\gamma}\frb^{\mu r}
(I_{\gamma}u_{\beta\lambda})u_{\alpha\mu}                  \nonumber\\
&+
(\frb^{\lambda r}_{\gamma}(\mathcal P_{\alpha}\frb^{\mu r})
+(\mathcal P_{\beta}\frb^{\lambda r}_{\gamma})\frb^{\mu r})
(I_{\gamma}u_{\beta\lambda})
R_{\mu}u_{\alpha}                                           \nonumber\\
&+
(\mathcal P_{\beta}\frb^{\lambda r}_{\gamma})
(\mathcal P_{\alpha}\frb^{\mu r})
 (I_{\gamma}R_{\lambda}u_{\beta})
R_{\mu}u_{\alpha}                                            \nonumber\\
\preceq&
\frb^{\lambda\rho}_{\gamma}\frb^{\mu r}
(I_{\gamma}u_{\beta\lambda})u_{\alpha\mu}                    \nonumber
\end{align}
for $\lambda,\mu\in\Lambda_0$. 
Thus from \eqref{11.2.10.12} we obtain 
$$
B_2\preceq2\sum_{\gamma}\sum_{\lambda,\mu\in\Lambda_0}
\frb^{\lambda\rho}_{\gamma}\frb^{\mu r}
(I_{\gamma}u_{\beta\lambda})u_{\alpha\mu}. 
$$
Hence using that by \eqref{1.8.5.12}
$$
2\frb^{\lambda r}_{\gamma}\frb^{\mu r}=
2\frb^{\lambda r}_{\gamma}I_{\gamma}\frb^{\mu r}
-2h\frb^{\lambda r}_{\gamma}P_{\gamma}\frb^{\mu r}
=\delta_{\gamma}(\frb^{\lambda r}\frb^{\mu r})
-2h\frb^{\lambda r}_{\gamma}P_{\gamma}\frb^{\mu r}, 
$$
we get 
\begin{align}
B_2\preceq& \sum_{\gamma}\sum_{\lambda,\mu\in\Lambda_0}
\delta_{\gamma}(\frb^{\lambda r}\frb^{\mu r})
(I_{\gamma}u_{\beta\lambda})u_{\alpha\mu}
                                                                   \nonumber\\
&-2\sum_{\gamma}\sum_{\lambda,\mu\in\Lambda_0}
\frb^{\lambda r}_{\gamma}(P_{\gamma}\frb^{\mu r})
(I_{\gamma}u_{\beta\lambda})R_{\mu}u_{\alpha}
                                                                    \nonumber\\
\preceq& \sum_{\gamma}\sum_{\lambda,\mu\in\Lambda_0}
(\delta_{\mu}u_{\alpha})
(\delta_{\gamma}(\frb^{\lambda r}\frb^{\mu r}))
I_{\gamma}u_{\beta\lambda}.                                           \label{2.2.10.12}
\end{align}
Taking the adjoint $\delta^{\ast}_{\mu}=-\delta_{\mu}$,  
using the Leibniz rule \eqref{8.7.5.12} and then using 
\eqref{1.8.5.12}, 
we have 
\begin{align}
(\delta_{\mu}u_{\alpha})
(\delta_{\gamma}(\frb^{\lambda r}\frb^{\mu r}))
I_{\gamma}u_{\beta\lambda}
\sim&
-u_{\alpha}(I_{\mu}\delta_{\gamma}(\frb^{\lambda r}\frb^{\mu r}))
I_{\gamma}u_{\beta\lambda\mu}                                          \nonumber\\
&
-u_{\alpha}(\delta_{\mu}\delta_{\gamma}
(\frb^{\lambda r}\frb^{\mu r}))
I_{\gamma\mu}u_{\beta\lambda}                                           \nonumber\\
\preceq& -u_{\alpha}(I_{\mu}\delta_{\gamma}
(\frb^{\lambda r}\frb^{\mu r}))
I_{\gamma}u_{\beta\lambda\mu}                                           \nonumber\\
=&-u_{\alpha}(\delta_{\gamma}
(\frb^{\lambda r}\frb^{\mu r}))
I_{\gamma}u_{\beta\lambda\mu}                                        \nonumber\\
&-u_{\alpha}(P_{\mu}\delta_{\gamma}
(\frb^{\lambda r}\frb^{\mu r}))
I_{\gamma}R_{\mu}u_{\beta\lambda}                                     \nonumber\\
\preceq& -u_{\alpha}(\delta_{\gamma}
(\frb^{\lambda r}\frb^{\mu r})) 
I_{\gamma}u_{\beta\lambda\mu}.                                        \label{1.2.10.12}
\end{align}
Taking the adjoint $I_{\gamma}^{\ast}=I_{\gamma}$ and using 
\eqref{3.18.9.12} we get 
\begin{align*}
u_{\alpha}(\delta_{\gamma}
(\frb^{\lambda r}\frb^{\mu r})) 
I_{\gamma}u_{\beta\lambda\mu}
=&
(I_{\gamma}u_{\alpha})(\delta_{\gamma}
(\frb^{\lambda r}\frb^{\mu r})) 
u_{\beta\lambda\mu}                                                \\
&+(I_{\gamma}u_{\alpha})(P_{\gamma}\delta_{\gamma}
(\frb^{\lambda r}\frb^{\mu r})) 
R_{\mu}u_{\beta\lambda}                                              \\
&+(I_{\gamma}u_{\alpha})(\delta_{\gamma}^2
(\frb^{\lambda r}\frb^{\mu r})) 
R_{\mu}u_{\beta\lambda}                                              \\
\preceq&
(I_{\gamma}u_{\alpha})(\delta_{\gamma}
(\frb^{\lambda r}\frb^{\mu r})) 
u_{\beta\lambda\mu}
\end{align*}
Using this, from \eqref{1.2.10.12} we get 
$$
(\delta_{\mu}u_{\alpha})
(\delta_{\gamma}(\frb^{\lambda r}\frb^{\mu r}))
I_{\gamma}u_{\beta\lambda}
\preceq
-(I_{\gamma}u_{\alpha})(\delta_{\gamma}
(\frb^{\lambda r}\frb^{\mu r})) 
u_{\beta\lambda\mu}.  
$$
Thus from \eqref{2.2.10.12}
we have 
\begin{equation}                             \label{4.2.10.12}
B_2\preceq-\sum_{\gamma}\sum_{\lambda,\mu\in\Lambda_0}
(I_{\gamma}u_{\alpha})
(\delta_{\gamma}(\frb^{\lambda r}\frb^{\mu r}))
u_{\beta\lambda\mu}.  
\end{equation}
Set 
$$
{\hat \fra}^{\lambda\mu}
={\fra}^{\lambda\mu}-\tfrac{1}{2}\frb^{\lambda r}\frb^{\mu r} 
\quad\text{for $\lambda,\mu\in\Lambda_0$}. 
$$
Then from \eqref{3.2.10.12} and \eqref{4.2.10.12}
we obtain 
\begin{align}
A_1+B_1&\preceq -2
\sum_{\lambda,\mu\in\Lambda_0}{\hat a}^{\lambda\mu}
u_{\alpha\lambda}u_{\alpha\mu},                        \label{9.2.10.12}\\
A_2+B_2&\preceq 2\sum_{\lambda,\mu\in\Lambda_0}\sum_{\gamma}
(I_{\gamma}u_{\alpha})
{\hat \fra}^{\lambda\mu}_{\gamma}u_{\beta\lambda\mu}.   \label{10.2.10.12}
\end{align}
By the simple inequality 
$2yz\leq \varepsilon y^2+\varepsilon^{-1}z^2$ 
and by using Lemma \ref{lemma 1.15.9.12} with 
$V=(V^{\lambda\mu})=(u_{\beta\lambda\mu})$, 
we have 
\begin{align*}
2\sum_{\lambda,\mu\in\Lambda_0}I_{\gamma}u_{\alpha}
\hat\fra^{\lambda\mu}_{\gamma}u_{\beta\lambda\mu}
\leq &
\varepsilon \big|
\sum_{\lambda,\mu\in\Lambda_0}\hat\fra^{\lambda\mu}_{\gamma}
u_{\beta\lambda\mu}\big|^2
+\varepsilon^{-1}|I_{\gamma}u_{\alpha}|^2                                    \\
\leq& \varepsilon N
 \sum_{\lambda,\mu,\eta\in\Lambda_0}\hat\fra^{\lambda\mu}
(\delta_{\lambda}u_{\beta\eta})(\delta_{\mu}u_{\beta\eta})                \\
&+\varepsilon N\sum_{\lambda,\mu\in\Lambda_0}|R_{\mu}u_{\beta\lambda}|^2
+\varepsilon^{-1}|I_{\gamma}u_{\alpha}|^2                                 \\
\preceq&
\varepsilon N
 \sum_{\lambda,\mu,\eta\in\Lambda_0}\hat\fra^{\lambda\mu}
u_{\beta\eta\lambda}u_{\beta\eta\mu}
\end{align*}
for each $\varepsilon>0$, where $N$ is a constant depending  
only on $d$, $K$, $\frak m$ and $|\Lambda|$. 
Choosing here $\varepsilon$ sufficiently small, by virtue of 
\eqref{2.9.5.12}, \eqref{8.2.10.12}, \eqref{6.2.10.12},  
\eqref{9.2.10.12} and \eqref{10.2.10.12} we have for each 
$h\neq0$ 
$$
\sum_{\alpha\in\Lambda^{\frak m}}
2(\delta_{\alpha}u)\delta_{\alpha}(L^hu+f)
+\sum_{\alpha\in\Lambda^{\frak m}}
\sum_r|\delta_{\alpha}(M^{h,r}u+g^r)|^2\preceq0, 
$$
which proves the lemma when $\frak p^{\lambda}=\frak q^{\lambda}=0$  
for all $\lambda\in\Lambda_0$. If this additional condition 
is not satisfied, then the only change in the proof is that there 
is an additional term,  
$$
A_0
=2(\delta_{\alpha}u,\delta_{\alpha}
(\frak p^{\lambda}\delta_{h,\lambda}u-\frak q^{\lambda}\delta_{-h,\lambda}u)),
$$
in the right-hand side of \eqref{2.9.5.12}, where the summation convention 
with respect to repeated $\lambda\in\Lambda_0$ is in force. 
By Lemma \ref{lemma 1.4.10.12} for each $h>0$ 
$$
A_0\preceq 0, 
$$
which completes the proof of the lemma. 
\end{proof}

Our key estimate is formulated in the following theorem, where 
$\cK_{\frak m}(T)$ is defined by \eqref{1.15.09.13}. 

\begin{theorem}                                            \label{theorem estimate}
Let Assumptions \ref{assumption 1.15.9.12} 
and 
\ref{assumption 2.15.9.12}      
hold with $\frak m\geq0$. 
Then problem \eqref{scheme}-\eqref{schemeini} 
has a 
unique $L_2$-valued solution $(\bar u^h_t)_{t\in[0,T]}$, 
which is 
a continuous $H^{\frak m}$-valued process. 
If Assumption  
\ref{assumption 3.7.5.12} also holds then for 
every $p>0$ there 
is a constant 
$N=N(T,d,K,p,\frak m,|\Lambda_0|)$ such that 
\begin{equation}                            \label{becsles}
E\sup_{t\leq T}|\bar u^h_t|^p_{\frak m}
\leq N(E|\psi|_{\frak m}^p+ E\cK^p_{\frak m}(T))
\end{equation}
for all $h>0$. If in addition to the above assumptions 
$\frak p^{\lambda}=\frak q^{\lambda}=0$ 
for all $\lambda\in\Lambda_0$, then \eqref{becsles} 
holds for all $h\in\bR\setminus\{0\}$. 
\end{theorem}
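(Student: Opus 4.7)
The plan is to split the theorem into two parts. The existence and uniqueness portion is immediate: Assumptions \ref{assumption 1.15.9.12} and \ref{assumption 2.15.9.12} are strictly stronger than Assumptions \ref{assumption 1.25.10.12} and \ref{assumption 2.25.10.12}, so Proposition \ref{proposition 1.26.10.12} already furnishes a continuous $H^{\frak m}$-valued solution $\bar u^h$. The heart of the theorem is the uniform moment estimate \eqref{becsles}, which I would obtain by a discrete-derivative energy identity combined with the stochastic Gronwall lemma \ref{lemma 1.6.10.12}.

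To set up the energy, fix $\alpha \in \Lambda^l$ for some $l \in \{0,\dots,\frak m\}$; since $\delta_\alpha^h$ is a bounded operator on $H^0$, applying it to the Hilbert-space SDE \eqref{6.25.10.12} and then using It\^o's formula for $|\cdot|_0^2$ yields an evolution for $|\delta_\alpha^h \bar u_t^h|_0^2$. Summing over all $\alpha$ in $\bigcup_{l=0}^{\frak m} \Lambda^l$ and setting
\[
y_t := \sum_{l=0}^{\frak m}\sum_{\alpha \in \Lambda^l} |\delta_\alpha^h \bar u_t^h|_0^2,
\]
one obtains $y_t = y_0 + \int_0^t \sum_{l=0}^{\frak m} Q_l^h(\bar u_s^h, f_s, g_s, s)\,ds + m_t$ for a continuous local martingale $m_t$, where $Q_l^h$ is the quantity defined in \eqref{2.5.10.12}. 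Because $\Lambda$ contains the unit basis vectors $\Lambda_2$, Lemma \ref{lemma 1.5.10.12} makes $y_t$ equivalent to $|\bar u_t^h|_{\frak m}^2$ up to constants depending only on $d$, $\frak m$ and $|\Lambda|$, so working with $y_t$ is harmless.

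The hard estimate in Lemma \ref{lemma 1.7.5.12} now controls the drift by $\sum_l Q_l^h \leq N(y_t + F_t)$ with $F_t := |f_t|_{\frak m}^2 + |g_t|_{\frak m+1}^2$, for $h>0$ in general and for any $h\neq 0$ when $\frak p^\lambda = \frak q^\lambda = 0$; this is the sole place the dichotomy in the statement enters. For the quadratic variation of $m$, Cauchy--Schwarz followed by Corollary \ref{corollary 1.6.10.12} applied to $M^{h,r} = \sum_{\lambda}\frb^{\lambda,r}\delta_\lambda^h$ yields
\[
d\langle m\rangle_t \leq N(y_t^2 + G_t\,y_t)\, dt, \qquad G_t \leq N|g_t|_{\frak m+1}^2.
\]
The stochastic Gronwall lemma \ref{lemma 1.6.10.12} applied with exponent $p/2$ then delivers $E\sup_{t\leq T} y_t^{p/2} \leq C\,E\, y_0^{p/2} + C\,E\,\cK_{\frak m}^p(T)$, which is exactly \eqref{becsles} after returning from $y$ to $|\bar u^h|_{\frak m}^2$.

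I do not expect a real obstacle at this level: the genuine difficulty, namely absorbing the second spatial derivatives of $\frb^{\lambda,r}$ into the nonnegative quadratic form $\fra^{\lambda\mu} - \tfrac12\frb^{\lambda,r}\frb^{\mu,r}$ without any smooth factorisation hypothesis, has already been surmounted inside Lemma \ref{lemma 1.7.5.12}, so Theorem \ref{theorem estimate} itself reduces to careful bookkeeping. The only routine care points are justifying It\^o's formula at Sobolev level $\frak m$ (direct, since the SDE lives in $H^{\frak m}$ and the operators $L^h_t$, $M^h_t$ are bounded there by \eqref{1.26.10.13}) and turning the local martingale into a true martingale by a standard stopping-time localisation before taking expectations.
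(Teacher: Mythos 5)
Your proposal is correct and follows essentially the same route as the paper: existence and regularity are read off from Proposition \ref{proposition 1.26.10.12}, and the estimate \eqref{becsles} is obtained by applying It\^o's formula to the equivalent norm $\sum_{|\alpha|\le\frak m}|\delta^h_\alpha \bar u^h_t|_0^2$, bounding the drift by Lemma \ref{lemma 1.7.5.12} (where the $h>0$ versus $h\neq0$ dichotomy enters), bounding $d\langle m\rangle_t$ via Corollary \ref{corollary 1.6.10.12}, and concluding with the stochastic Gronwall Lemma \ref{lemma 1.6.10.12}. Your explicit summation over all lengths $l\le\frak m$ and the remark on localising the local martingale are just cleaner bookkeeping of what the paper does implicitly.
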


\begin{proof} 
By virtue of Proposition \ref{proposition 1.26.10.12}
we need only prove the statements on the estimate 
\eqref{becsles}. To this end 
we introduce the Hilbert norm $|v|_{\cH^{\frak m}}$ 
on $H^{\frak m}$ 
by 
$$
|v|_{\cH^{\frak m}}^2=\sum_{|\alpha|
\leq {\frak m}}|\delta_{\alpha}v|^2_0, 
$$
and apply It\^o's formula to $|u_t|_{\cH^{\frak m}}^2$ to obtain 
\begin{align*}                                         \label{1.5.10.12}
d|\bar u^h_t|^2_{\cH^{\frak m}}
=&\{2(\bar u_t, L^h_t\bar u^h_t+f_t)_{\cH^m}
+\sum_{r}|M_t^{h,r}\bar u^h_t+g^{r}_t|^2_{\cH^m}\}\,dt
+dm^h_t\\
=&Q_{\frak m}^h(\bar u^h_t,f_t,g_t)\,dt+dm^h_t, 
\end{align*}
where 
$$
dm^h_t=2(\bar u^h_t,M^{h, r}_t\bar u_t+g^{r}_t)_{\cH^{\frak m}}
\,dw^{r}_t, 
$$
and $Q^h_{\frak m}$ is defined by \eqref{2.5.10.12}. 
Since $|\varphi|_{\frak m}\leq|\varphi|_{\cH^{\frak m}}$ 
for all $\varphi\in H^{\frak m}$, 
by virtue of Lemma \ref{lemma 1.7.5.12} we get 
$$
d|\bar u^h_t|^2_{\cH^{\frak m}}
\leq N(|\bar u^h_t|^2_{\cH^{\frak m}}
+|f_t|_{\frak m}^2+|g_t|_{\frak m}^2)\,dt
+dm^h_t
$$
for each $h>0$, provided 
Assumptions \ref{assumption 1.15.9.12},   
\ref{assumption 2.15.9.12} and \ref{assumption 3.7.5.12}      
hold with $\frak m\geq0$,  
and we get this for all $h\neq0$ if 
in addition to these assumptions, 
$\frak p^{\lambda}=\frak q^{\lambda}=0$ 
also holds for every $\lambda\in\Lambda_0$. 
Clearly,
$$
|(\bar u^h,M^{h,r}\bar u+g^{r})_{\cH^{\frak m}}|
\leq
|(\bar u^h,M^{h,r})_{\cH^{\frak m}}|
+|(\bar u^h,g^{r})_{\cH^{\frak m}}|, 
$$
and by Lemma \ref{lemma 1.5.10.12}
$$
\sum_{r}|(\bar u^h,g^{r})_{\cH^{\frak m}}|^2
\leq |\bar u^h|_{\cH^{\frak m}}^2
\sum_{r}|g^{r}|_{\cH^{\frak m}}^2\leq N|\bar u^h|_{\cH^{\frak m}}^2
|g|^2_{\frak m} 
$$
with a constant $N=N(m,d,\Lambda)$. 
By Corollary \ref{corollary 1.6.10.12}
$$
\sum_{r}|(\bar u^h,M^{h,r})_{\cH^{\frak m}}|^2
\leq 
N|\bar u^h|^2_{\frak m}\leq N|\bar u^h|^2_{\cH^{\frak m}}. 
$$
with a constant $N=N(\frak m,d,K,|\Lambda_0|)$. 
Thus for $y^h_t=|\bar u^h_t|^2_{\cH^{\frak m}}$ for each 
$h>0$ we have 
that almost surely 
\begin{equation}                              \label{3.6.10.12}
y^h_t\leq N\int_0^t(y^h_s+F_s)\,ds+m^h_t
\quad\text{for all $t\in[0,T]$},
\end{equation}
 where $F_s=|f_s|^2_{\frak m}+|g_s|^2_{\frak m}$ and $m^h$ is a continuous 
local martingale for every $h\neq0$, such that 
 by virtue of Corollary \ref{corollary 1.6.10.12} we have 
for every $h\neq0$ 
\begin{align}
d\<m^h\>_t=&4\sum_{r=1}^{\infty}
|(\bar u^h_t,M^{h,r}_t\bar u^h_t+g^{r}_t)_{\cH^{\frak m}}|^2\,dt
\nonumber\\
\leq&N (|y^h_t|^2+y^h_t|g_t|_{\frak m}^2)\,dt  
\end{align}
with a constant $N=N(K,\frak m,d,|\Lambda_0|)$. 
Moreover, \eqref{3.6.10.12} holds for each $h\neq0$ if 
$\frak p^{\lambda}=\frak q^{\lambda}=0$ 
for all $\lambda\in\Lambda_0$. 
Hence we finish the proof by applying 
Lemma \ref{lemma 1.6.10.12} to 
\eqref{3.6.10.12} and using  the obvious inequality 
$|u^h|^2_{\frak m}\leq y^h$. 
\end{proof}

For $p>0$ let $\mathbb H^l_p$ denote the space 
of $H^l$-valued adapted processes $(v_t)_{t\in[0,T]}$ 
such that 
$$
|v|_{\bH^l_p}^p=E\int_0^T|v_t|^p_l\,dt<\infty.
$$
Clearly, $\mathbb H^l_p$ is a Banach space for $p\geq1$,  
and it is a Hilbert space for $p=2$. 
Our aim now is to show that if 
$E\cK^p_1(T)<\infty$ for some $p\geq2$, 
then Theorem \ref{theorem estimate} 
implies the weak convergence of $\bar u^h$ in $\bH^1_p$ to some 
$\bar u$, which is the unique solution 
of 
\begin{equation}                          \label{5.30.10.12}
du_t=(\fra_t^{\lambda\mu}
\partial_{\lambda}\partial_{\mu}u_t+f_t)\,dt
+(\frb^{\lambda r}_t\partial_{\lambda}u_t+g^r_t)\,dw_t^r, 
\quad u_0=\psi, 
\end{equation}
on $[0,T]$, where the summation 
convention is used over repeated $\lambda,\mu\in\Lambda_1$ and 
$r\in\{0,1,...\}$. 
The notion of solution is understood in 
the sense of Definition \ref{definition solution}. Thus, 
saying that a process $\bar u$ is a solution to 
\eqref{5.30.10.12} on $[0,T]$ means that $\bar u$ has a 
weakly continuous $H^1$-valued process  
from its equivalence 
class, say $u$, such that almost surely  
\begin{align}                                     
(u_t,\varphi)=&(\psi,\varphi)
+\int_0^t\{
(\partial_{\lambda}u_s,\partial_{-\mu}(\fra^{\lambda\mu}_s\varphi))
+(f_s,\varphi)\}\,ds \nonumber\\
&+\int_0^t
(\frb^{\lambda r}\partial_{\lambda}v_s+g^r_s
,\varphi)\,dw_s^r                                   \label{1.2.11.12}
\end{align}
for all $t\in[0,T]$ and $\varphi\in C_0^{\infty}(\bR^d)$.  

\begin{theorem}                                 \label{theorem convergence}                
Let Assumptions \ref{assumption 1.15.9.12}, 
\ref{assumption 2.15.9.12} 
and 
\ref{assumption 3.7.5.12} 
hold with $\frak m\geq1$. 
Let $p\geq2$ and assume that 
$E|\psi|^p_{\frak m}<\infty$ and 
$E\cK_{\frak m}^p(T)<\infty$. Then 
$\bar u^h$ converges weakly in $\bH^{l}_p$, for 
every $l\in[0,\frak m]$,  
to some $u=(u_t)_{t\in[0,T]}$ as $n\to\infty$. 
The process $u$ is the unique 
solution of \eqref{5.30.10.12} on $[0,T]$.  
Moreover, $u$ is an 
$H^{\frak m}$-valued weakly continuous process, 
it is strongly 
continuous as an $H^{\frak m-1}$-valued process, 
and 
\begin{equation}                                        \label{ubecsles}
E\sup_{t\in[0,T]}|u|^p_{l}
\leq N(E|\psi|^p_{l}+E\cK^{p}_{l}(T))
\end{equation}
for $l\in\{0,1,...,\frak m\}$, 
with a constant $N=N(K,d,p,T,\frak m,|\Lambda_0|)$. 
\end{theorem}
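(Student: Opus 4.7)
The plan is as follows. Theorem \ref{theorem estimate} gives bounds on $\bar u^h$ in $\bH^l_p$ that are uniform in $h>0$ for every integer $l\in[0,\frak m]$. Since $p\geq 2$ makes $\bH^l_p=L_p(\Omega\times[0,T];H^l)$ reflexive, Banach--Alaoglu plus a diagonal extraction yields a sequence $h_n\downarrow 0$ and a process $u$ so that $\bar u^{h_n}\rightharpoonup u$ weakly in $\bH^l_p$ simultaneously for every integer $l\in[0,\frak m]$. By weak lower semicontinuity of norms combined with \eqref{becsles}, the limit $u$ already satisfies the estimate \eqref{ubecsles} integrated in time; the supremum-in-$t$ version is recovered once $u$ is shown to be a solution, via a second application of Itô's formula to $|u_t|^2_{\cH^l}$ and Lemma \ref{lemma 1.6.10.12}, exactly as in the proof of Theorem \ref{theorem estimate}.

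Second, identify the limit as a solution to \eqref{5.30.10.12}. Starting from the weak form of \eqref{wholescheme}, test against $\varphi\in C_0^{\infty}(\bR^d)$, move the finite difference operators from $\bar u^h$ onto $\varphi$ by discrete integration by parts, and then apply Lemma \ref{lemma weakerror}: parts (i)--(iii) together replace $(L^h_s\bar u^h_s,\varphi)$ by $-(\delta^h_\lambda \bar u^h_s,\partial_{-\mu}(\fra^{\lambda\mu}_s\varphi))+\cdots$ up to an error controlled by $h|\bar u^h_s|_1|\varphi|_2$, and similarly turn $(M^{h,r}_s\bar u^h_s,\varphi)$ into $(\frb^{\lambda r}_s\partial_\lambda \bar u^h_s+g^r_s,\varphi)$ plus an $O(h)$ term. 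To pass to the weak limit in the stochastic integral, pair both sides with $\xi\int_0^T\eta_t\,dt$ for bounded predictable $\eta$ and $\cF_0$-measurable bounded $\xi$ (or more generally integrate against a test element of the dual of $\bH^l_p$); the stochastic integral then becomes a bounded linear functional of $\bar u^h$ on $\bH^l_p$ via the It\^o isometry / BDG, which is continuous in the weak topology. The uniform $L_p$ bound lets the $O(h)$ error terms vanish. The outcome is that $u$ satisfies \eqref{1.2.11.12}.

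Third, prove uniqueness of solutions to \eqref{5.30.10.12} in the class given by Definition \ref{definition solution}. Let $v=u_1-u_2$ be the difference of two solutions with the same data; then $v$ solves the homogeneous equation. Apply It\^o's formula to $|v_t|_0^2$ (justified by approximating $v$ by mollifications in $x$, since $v$ is $H^1$-valued with $dv$ having $H^{-1}$-drift and $H^0$-diffusion terms), use the stochastic parabolicity condition Assumption \ref{assumption 3.7.5.12}(i) to control the quadratic form of the second-order and martingale terms, and use the standard trick of integrating by parts to handle the first-order coefficients, absorbing lower-order factors via Cauchy--Schwarz. This yields $E|v_t|^2_0\leq N\int_0^t E|v_s|^2_0\,ds$, and Gronwall gives $v\equiv 0$. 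Uniqueness upgrades the subsequential convergence to weak convergence of the whole family $\{\bar u^h\}_{h>0}$.

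Finally, weak continuity in $H^{\frak m}$ follows from the uniform-in-$t$ bound $\sup_{t}|u_t|_{\frak m}<\infty$ (a.s.) combined with continuity of $t\mapsto (u_t,\varphi)$ for every $\varphi\in C_0^{\infty}$, which is read off from \eqref{1.2.11.12}; strong continuity as an $H^{\frak m-1}$-valued process is then standard from the SPDE, since the drift takes values in $H^{\frak m-2}\hookrightarrow H^{\frak m-1}$ after one integration, the diffusion lies in $H^{\frak m}(l_2)\hookrightarrow H^{\frak m-1}(l_2)$, and the It\^o integral in a Hilbert space is continuous. The main obstacle I expect is the justification of weak limits in the stochastic integral: one needs to express it as a continuous linear functional on $\bH^l_p$ so that the weak convergence of $\bar u^h$ does the job, and this requires carefully decoupling the $\omega$-dependence through the predictability of the coefficients and using BDG to convert the It\^o integral into an integral pairing compatible with the weak topology.
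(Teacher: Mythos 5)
Your overall strategy coincides with the paper's: uniform bounds from Theorem \ref{theorem estimate} give weak compactness in $\bH^l_p$; the limit is identified by pairing the weak form of \eqref{wholescheme} with $\eta_t$ and a test function, showing the drift and stochastic-integral pairings are continuous linear functionals on $\bH^1_p$ via Bunyakovsky--Cauchy--Schwarz and Davis--Burkholder--Gundy, and killing the $O(h)$ discrepancies with Lemma \ref{lemma weakerror}; uniqueness comes from It\^o's formula for $|u^{(1)}-u^{(2)}|^2_0$ and Gronwall, and upgrades subsequential to full weak convergence. All of this matches the paper's proof.

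There is, however, one genuine gap: your derivation of the supremum-in-$t$ estimate \eqref{ubecsles} (and hence of the a.s. bound $\sup_t|u_t|_{\frak m}<\infty$ on which your weak-continuity argument rests). You propose ``a second application of It\^o's formula to $|u_t|^2_{\cH^l}$ \dots exactly as in the proof of Theorem \ref{theorem estimate}.'' That step does not transfer to the limit equation. In the proof of Theorem \ref{theorem estimate}, It\^o's formula is applied to the solution of \eqref{6.25.10.12}, an SDE in $H^{\frak m}$ with \emph{bounded} operators $L^h$, $M^{h,r}$, and the resulting quadratic form is controlled by Lemma \ref{lemma 1.7.5.12}, whose proof is intrinsically discrete (discrete Leibniz rules, the operators $I_\lambda$, $R_\lambda$, $P_\lambda$, and Lemma \ref{lemma 1.4.10.12}). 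For the limiting degenerate SPDE \eqref{5.30.10.12} the drift loses two derivatives, so It\^o's formula for $|u_t|^2_l$ requires the Gelfand-triple version together with the continuous analogue of Lemma \ref{lemma 1.7.5.12}, i.e.\ the estimate $2(D^\alpha u, D^\alpha(\cL u+f))+\sum_r|D^\alpha(\cM^r u+g^r)|^2_0\leq N(|u|^2_{\frak m}+\dots)$ under the degenerate parabolicity condition. That estimate is exactly the hard result of \cite{KR} and \cite{GK} which this section of the paper is constructed to \emph{avoid} (and re-prove): invoking it here is either circular or imports the very machinery the finite-difference construction replaces. The paper instead transfers \eqref{becsles} to the limit without any It\^o formula at the level of $u$: by the Banach--Saks theorem a sequence of convex combinations of the $\bar u^{h_n}$ converges strongly in $H^{\frak m}$ for $P\times dt$-a.e.\ $(\omega,t)$, whence $(D^\alpha u_{t_i},\varphi_j)\leq\lim_n|\bar u^{h_n}_{t_i}|_{\frak m}|\varphi_j|_0$ on a dense set of times; combining this with the continuity of $t\mapsto(D^\alpha u_t,\varphi_j)$ and Fatou's lemma yields both $E\sup_t|u_t|^p_{\frak m}\leq\liminf_n E\sup_t|\bar u^{h_n}_t|^p_{\frak m}$ and the weak continuity in $H^{\frak m}$. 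You should replace your It\^o-formula step by an argument of this kind.
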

\begin{proof}
If $u^{(1)}$ and $u^{(2)}$ are solutions to 
of \eqref{5.30.10.12} on $[0,\tau]$ for a stopping time 
$\tau$, then using It\^o's formula for 
$y_t=|u^{(1)}_{t\wedge\tau}-u^{(2)}_{t\wedge\tau}|^2_0$, 
due to Assumptions \ref{assumption 1.15.9.12} 
through 
\ref{assumption 3.7.5.12}, by simple calculations one obtains 
that almost surely 
$$
y_t\leq N\int_0^ty_s\,ds+m_t\quad \text{for all $t\in[0,T]$}, 
$$
for a continuous local martingale $m$ starting from $0$. 
Hence the uniqueness of the solution to 
\eqref{5.30.10.12} on $[0,\tau]$ 
follows immediately for stopping times $\tau$, in particular, 
for $\tau=T$. 
Thus to show that for  $0<h_n\to0$ we have 
$\bar u^h\to u$ weakly in $\bH^{l}_p$ for 
$l=1,...,\frak m$, where $u$ is the solution to 
\eqref{5.30.10.12}, we need only prove that 
for every sequence $0<h_n\to0$ there is a subsequence 
$h_n'$ such that $\bar u^h\to u$ 
converges weakly in $\bH^{l}_p$, for 
$l=1,...,\frak m$, to the solution $u$ of \eqref{5.30.10.12} 
as $h=h_n'\to 0$.  
To this end let us consider a sequence $0<h_n\to0$.  
Then, since by virtue of Theorem \ref{theorem estimate} 
we have 
\begin{equation}                        \label{2.27.10.12}
\sup_{n}|\bar u^{h_n}|_{\frak \bH^l_p}<\infty, 
\quad l=0,1,...,\frak m, 
\end{equation}
there is a subsequence of $h_n$, 
for simplicity we denote it also 
by $h_n$, such that  
$$
\bar u^{h_n}\to u 
\quad
\text{weakly in $\bH^l$ for $l=0,1,...,\frak m$}, 
$$
for some $u\in\bH^m$. 
Fix a function  
$\varphi\in C_0^{\infty}(\bR^d)$ 
and an adapted real valued process 
$(\eta_t)_{t\in[0,T]}$ which is bounded by some 
constant $c$. 
Define the functionals $\Phi$, $\Phi_h$, $\Psi$ 
and $\Psi_h$ over $\bH^1_p$ by 
$$
\Phi(v)=E\int_0^T\eta_t\int_0^t
\sum_{\lambda,\mu\in\Lambda_1}
(\partial_{\lambda}v_s, \partial_{-\mu}(\fra^{\lambda\mu}_s\varphi))
\,ds\,dt, 
$$
$$
\Phi_h(v)=E\int_0^T\eta_t\int_0^t
\sum_{\lambda,\mu\in\Lambda_1}
(\fra_s^{\lambda\mu}\delta^h_{\lambda}\delta_{\mu}^hv_s, \varphi),
\,ds\,dt, 
$$
$$
\Psi(v)=E\int_0^T\eta_t\int_0^t
\sum_{\lambda\in\Lambda_1}
(\frb^{\lambda r}_s\partial_{\lambda}v_s,\varphi)\,dw^r_s\,dt
$$
$$
\Psi_h(v)=E\int_0^T\eta_t\int_0^t
\sum_{\lambda\in\Lambda_1}
(\frb^{\lambda r}_s\delta^h_{\lambda}v_s,\varphi)\,dw^r_s\,dt
$$
for $v\in\bH^1_p$ and $h>0$, where $\partial_{\lambda}$ 
is the identity for $\lambda=0$.
 By the Bunyakovsky-Cauchy-Schwarz and 
the Davis-Burkholder-Gundy inequalities we have
$$
\Phi(v)\leq cTE\int_0^T
\sum_{\lambda,\mu\in\Lambda_1}
|\partial_{\lambda}v_s|_0|\partial_{\mu}
(\fra^{\lambda\mu}_s\varphi)|_0
\,ds \leq N|v|_{\bH^1_p}|\varphi|_{1}
$$
\begin{align*}
\Psi(v)\leq&cTE\sup_{t\leq T}|\int_0^t
\sum_{\lambda\in\Lambda_1}
(\frb^{\lambda r}_t\partial_{\lambda}v,\varphi)\,dw^r_s|\\
\leq& 3cT
\big\{E\int_0^T\sum_{r=1}^{\infty}|\sum_{\lambda\in\Lambda_1}
|\frb^{\lambda r}_t\partial_{\lambda}v_s|_0|\varphi|_0|^2\,ds
\big\}^{1/2}\\
\leq&N|v|_{\bH^1_p}|\varphi|_1
\end{align*}
with a constant $N=N(c,K,T,d,p,|\Lambda_0|)$. Consequently, 
$\Phi$  and $\Psi$ are continuous 
linear functionals over $\bH^1_p$, 
and 
therefore 
\begin{equation}                                \label{5.29.10.12}
\lim_{n\to\infty}\Phi(\bar u^{h_n})=\Phi(\bar u),
\quad
\lim_{n\to\infty}\Psi(\bar u^{h_n})=\Psi(\bar u). 
\end{equation} 
Using the Bunyakovsky-Cauchy-Schwarz and 
the Davis-Burkholder-Gundy inequalities 
by Lemma \ref{lemma weakerror} we get 
\begin{equation}                              \label{6.29.10.12}     
|\Phi_h(\bar u^h)-\Phi(\bar u^h)|+
|\Psi_h(\bar u^h)-\Psi(\bar u^h)|
\leq Nh|u^h|_{\bH^1_p}|\varphi|_2
\end{equation}
with a constant $N=N(c,K,T,d,p,|\Lambda|_0)$. 
From \eqref{5.29.10.12} and \eqref{6.29.10.12}, 
taking into account 
\eqref{2.27.10.12} we obtain 
\begin{equation}                            \label{2.30.10.12}
\lim_{n\to\infty}\Phi_{h_n}(\bar u^{h_n})=\Phi(\bar u), 
\quad 
\lim_{n\to\infty}\Psi_{h_n}(\bar u^{h_n})=\Psi(\bar u). 
\end{equation}
Let us now multiply both sides of equation 
\eqref{1.30.10.12} by $\eta_t$ and integrate over 
$\Omega\times[0,T]$ against the measure $P\times dt$ 
to get 
\begin{align*}                  
E\int_0^T\eta_t(\bar u_t^h,\varphi)\,dt
=&E\int_0^T\eta_t(\psi,\varphi)\,dt\\
&+E\int_0^T\eta_t\int_0^t
(L^h_s\bar u_s^h+f_s,\varphi)\,ds\,dt\\
&+E\int_0^T\eta_t\int_0^t
(M^{h,r}_s\bar u_s^h+g^r_s,\varphi)\,dw^r_s\,dt. 
\end{align*}
Taking here the limit $h=h_n\to0$, 
by virtue of \eqref{2.30.10.12} 
we get
\begin{align*}                  
E\int_0^T\eta_t(\bar u_t,\varphi)\,dt
=&E\int_0^T\eta_t(\psi,\varphi)\,dt                           \\
&+E\int_0^T\eta_t\int_0^t
(\partial_{\lambda}\bar u_s,\partial_{-\mu}(a^{\lambda\mu}_s\varphi))
+(f_s,\varphi)
\,ds\,dt\\
&+E\int_0^T\eta_t\int_0^t
(b^{\lambda r}_s\partial_{\lambda}\bar u_s+g^r_s,\varphi)\,dw^r_s\,dt 
\end{align*}
for every bounded predictable process $(\eta_t)_{t\in[0,T]}$. 
Hence \eqref{1.2.11.12} holds for $P\times dt$ almost every 
$(\omega,t)\in\Omega\times[0,T]$ for each 
$\varphi\in C_0^{\infty}(\bR^d)$, which by virtue of the theorem 
on It\^o's formula from \cite{KrR} or \cite{Ro} implies that in the equivalence 
class of $u$ there is an $H^0$-valued continuous 
 process, denoted also by $u$, such that almost surely 
\eqref{1.2.11.12} holds for all $t\in[0,T]$ and 
$\varphi\in C_0^{\infty}(\bR^d)$. 
Note that 
$$
v^{\ast}_t:=\fra^{\lambda\mu}_t\partial_{\lambda}\partial_{\mu}u_t
+f_t,\quad t\in[0,T]
$$
is an $H^{\frak m-2}$-valued process and 
$$
m_t:=\psi
+\int_0^t(\frb^{\lambda r}\partial_{\lambda}u_s+g^r_s)\,dw^r_s
\quad 
t\in[0,T], 
$$
is an $H^{\frak m-1}$-valued continuous local martingale, 
such that 
$$
u_t=\int_0^tv^{\ast}_s\,ds+m_t
$$
for $P\times dt$ almost every $(\omega,t)\in\Omega\times[0,T]$, 
and almost surely 
$$
\int_0^T|u_t|_{\frak m}^2\,dt<\infty, 
\quad 
\int_0^T|u_t^{\ast}|_{\frak m-2}^2\,dt<\infty.   
$$  
Hence by considering the continuous and dense embedding 
$H^{\frak m-2}\hookrightarrow H^{\frak m-1}$ and the identification 
of $H^{\frak m-2}$ with its adjoint by the help of the inner 
product in $H^{\frak m-1}$, using the theorem on It\^o's 
formula from \cite{KrR} or \cite{Ro} again we see that the equivalence class 
of $u\in\bH^{\frak m}_p$ contains an $H^{\frak m-1}$-valued continuous 
process. It remains to prove 
that this $H^{\frak m-1}$-valued 
continuous process, which we keep denoting also by $u$, 
is almost surely an $H^{\frak m}$-valued 
weakly continuous process, 
and that 
\eqref{ubecsles} holds. 
To this end we follow an argument from 
\cite{KR}.  By the Banach-Saks theorem there is a sequence 
$v^{n}$ of finite convex combinations of elements of 
$(u^{h_n})_{n=1}^{\infty}$ such that $v^{n}\to u$ 
strongly in $\bH^{\frak m}$ as $n\to\infty$. Hence for a subsequence 
$n_k\to\infty$ we have 
$$
v^{{n}}\to u 
\quad
\text
{strongly in $H^{\frak m}$, for $P\times dt$ almost all 
$(\omega,t)\in\Omega\times[0,T]$}.  
$$
Thus there is a dense set 
$\{t_i\}_{i=1}^{\infty}$ 
in $[0,T]$ such that for all multi-indices 
$|\alpha|\leq \frak m$ 
for each set 
$\{\phi_j\}_{j=1}^{\infty}\subset C_{0}^{\infty}(\bR^d)$ 
we have almost surely 
\begin{align}                                   
(u_{t_i}, (-1)^{|\gamma|}D^{\alpha}\varphi_j)
=&(D^{\alpha}u_{t_i}, \varphi_j)         \nonumber     \\     
=&\lim_{n\to\infty}(D^{\alpha}v^{n}_{t_i}, \varphi_j)
\leq 
\lim_{n\to\infty}|u^{h_{n}}_{t_i}|_{\frak m}|\varphi_j|_0   \label{3.4.11.12} 
\end{align}
for all integers $i,j\geq1$. Since $u$ is an $H^{\frak m-1}$ 
valued continuous process, substituting $D^{\alpha}\varphi_j$ 
for $\varphi$ in \eqref{1.2.11.12} and integrating by parts,  
we can see that almost surely 
\begin{equation}                                         \label{4.4.11.12}
(D^{\alpha}u_t,\varphi_j)
\quad
\text{is continuous in $t\in[0,T]$}
\end{equation}
for all multi-indices 
$|\alpha|\leq \frak m$ and for all $j\geq1$. 
Taking $\{\varphi_j\}_{j=1}^{\infty}$ dense in the unit ball of $H^0$, 
hence we have  
that almost surely 
\begin{align}                                    
\sup_{t\in[0,T]}|D^{\alpha}u_t|_0
=&\sup_{t\in[0,T]}\sup_{j\geq1}(D^{\alpha}u_t,\varphi_j)=
\sup_{i\geq1}\sup_{j\geq1}(D^{\alpha}u_{t_i},\varphi_j)          \nonumber\\
\leq&
\sup_{i\geq1}
\lim_{n\to\infty}|u^{h_{n}}_{t_i}|_{\frak m}
\leq \liminf_{n\to\infty}\sup_{t\in[0,T]}|u^{h_{n}}_{t}|_{\frak m}.
\end{align} 
Hence by Fatou's lemma  
$$
E\sup_{t\in[0,T]}|D^{\alpha}u_t|^p_{0}
\leq \liminf_{n\to\infty}
E\sup_{t\in[0,T]}|u^{h_{n}}_{t}|_{\frak m}^p,  
$$
which together with \eqref{becsles} yield 
$$
E\sup_{t\in[0,T]}|u_t|^p_{\frak m}
\leq N (E|\psi|_{\frak m}^p+ E\cK^p_{\frak m}(T))
<\infty.
$$
Using this, from \eqref{4.4.11.12} we get that almost surely 
$u$, as an $H^{\frak m}$-valued process, is 
 weakly continuous in $t\in[0,T]$, which 
completes the proof.   
\end{proof}

\begin{proof}[Proof of Theorem \ref{theorem 1.14.10.12}]
Note first that due to 
Assumption \ref{assumption 1.12.10.12}, 
equations \eqref{equation} and \eqref{5.30.10.12} 
are equivalent. 
Set  
\begin{equation}                          \label{1.27.10.12}
\cR_{\frak m}^2(t)=|\psi|^2_{\frak m}+\mathcal K_{\frak m}^2(t)=|\psi|^2_{\frak m}+
\int_{0}^{t}|f_s|^2_{\frak m}+|g_s|^2_{\frak m+1}\,ds, 
\quad t\in[0,T], 
\end{equation}
and define 
$$
\tau_n=\inf\{t\in[0,T]:\cR_{\frak m}(t)\geq n\}, 
$$
$$
\psi^{(n)}=\psi{\bf{1}}_{\tau_n>0},
\quad f^{(n)}_t=f_t{\bf{1}}_{t\leq\tau_n}, 
\quad g^{(n)}_t=g_t{\bf{1}}_{t\leq\tau_n}.
$$
Then $\{\tau_n\}_{n=1}^{\infty}$ is a nondecreasing 
sequence of stopping times such that 
$P(\tau_n\geq T)\to1$ for $n\to\infty$, and the process 
$\cR_{\frak m}$ with $\psi^{(n)}$, 
$f^{(n)}$ and $g^{(n)}$ in place of 
$\psi$, 
$f$ and $g$, respectively, is bounded by $n$. 
Then by virtue of Theorem \ref{theorem convergence} 
for each $n$ the Cauchy problem 
\eqref{5.30.10.12} with 
${\bf 1}_A\psi^{(n)}$, 
${\bf1}_A{\bf1}_{(0,\tau]}f^{(n)}$ and 
${\bf1}_A{\bf1}_{(0,\tau]}g^{(n)}$ in place of 
$\psi$, 
$f$ and $g$, respectively, 
admits a solution $u^{(n)}$ on $[0,T]$, and because 
of the uniqueness of the solution on any stochastic 
interval, for $m\geq n$ we have $u^{(n)}_t=u^{(m)}_t$ 
almost surely for $t\in[0,\tau_n]$. 
Hence almost surely $u_t=\lim_{n\to\infty}u^{(n)}_t$ exists 
for $t\in[0,T]$ and it is the solution to \eqref{5.30.10.12}. 
Moreover, for each stopping time $\tau\leq T$ 
and set $A\in\cF_0$ problem \eqref{5.30.10.12} 
admits a unique solution $v$ on $[0,T]$ 
and $v_t={\bf1}_Au^{(n)}_t$. Thus by 
\eqref{ubecsles} for every $p\geq2$ we have 
$$
E({\bf1}_A\sup_{t\leq \tau\wedge\tau_n}
|u_t|_l^p)
\leq 
NE({\bf1}_A\cR_{l}^p(\tau_n\wedge\tau))
$$
for every sopping time $\tau\leq T$ and $A\in\cF_0$, where  
$N$ is a constant depending only on $K$, $T$, $p$, $d$, $\frak m$, 
and $|\Lambda_0|$. 
Hence by Lemma 3.2 from \cite{GK2003} for each $p>0$ 
we get 
$$
E(\sup_{t\leq \tau\wedge\tau_n}
|u_t|_l^p)
\leq 
NE(\cR_{l}^p(\tau_n\wedge\tau))
$$ 
for every stopping time $\tau\leq T$ 
with a constant $N=N(K,T,p,d,\frak m,|\Lambda_0|)$. 
Letting here $n\to\infty$ we get \eqref{1.15.10.12}  
for any $p>0$ and stopping time $\tau\leq T$. 
\end{proof}

\mysection{The coefficients of the expansion}                   \label{section coefficients}

To determine the coefficients $u^{(1)}$,...,
$u^{(k)}$ in the expansion \eqref{expansion} we assume that 
Assumptions 
\ref{assumption 1.12.10.12} 
through \ref{assumption 3.7.5.12} hold  
and consider the system of stochastic PDEs 
$$
dv^{(n)}_{t}=\big(\cL_{t}v^{(n)}_{t}+
\sum_{l=1}^{n}\tbinom{n}{l}\cL^{(l)}_{t}v^{(n-l)}_{t}\big)\,dt
$$
\begin{equation}          
                                                    \label{5.28.6}
+\big(\cM^{r}_{t}v^{(n)}_{t}+
\sum_{l=1}^{n}\tbinom{n}{l}\cM^{(l)r}_{t}v^{(n-l)}_{t}
\big)\,dw^{r}_{t}, \quad n=1,...,k,  
\end{equation} 
with initial condition $v^{(n)}_0=0$ 
for $v^{(n)}$,  $n=1,...,k$, 
where $v^{(0)}=u$  is the solution 
of \eqref{equation}-\eqref{ini}, 
\begin{equation}                                  \label{2.10.10.12}
\cL^{(0)}_{t}=\sum_{\lambda,\mu\in\Lambda}
\ga^{\lambda\mu}_{t}\partial_{\lambda}\partial_{\mu}
+\sum_{\lambda\in\Lambda_0} 
(\gp^{\lambda}_{t}-\gq^{\lambda}_{t})\partial_{\lambda} ,
\quad
\cM^{(0)r}_{t}
=\sum_{\lambda\in\Lambda}\gb^{\lambda r}_{t}\partial_{\lambda}
\end{equation}
and 
$$
\cL^{(n)}_{t}=
\sum_{\lambda,\mu\in\Lambda_{0}}\ga^{\lambda\mu}_{t}
\sum_{j=0}^{n}A_{nj}\partial_{\lambda}^{j+1}
\partial_{\mu}^{n-j+1}+(n+1)^{-1}
\sum_{\lambda\in\Lambda_{0}}
(\ga^{\lambda0}_{t}+\ga^{0\lambda}_{t})
B_{n}\partial_{\lambda}^{n+1}
$$
\begin{equation}                                   \label{3.10.10.12}
+(n+1)^{-1}\sum_{\lambda\in\Lambda_{0}}
(\gp^{\lambda}_{t}+(-1)^{n+1}\gq_t^{\lambda})
\partial_{\lambda}^{n+1},
\end{equation}
\begin{equation}                                   \label{4.10.10.12}
\cM^{(n)r}_{t}=(n+1)^{-1}\sum_{\lambda\in\Lambda_{0}}
\gb_{t}^{\lambda r}B_n\partial_{\lambda}^{n+1}
\end{equation}
for $n\geq1$. Here and later on the constants $A_{nj}$ 
and $B_n$ are defined for integers 
$n\geq0$ and $j\in[0,n]$ as follows: 
\begin{equation}                           \label{1.9.9}
B_n= \left\{ \begin{array}{ll}
0 & \mbox{if $n$ is odd}\\
1 & \mbox{if $n$ is even}
\end{array} \right. , 
\quad
A_{nj} = \left\{ \begin{array}{ll}
0 & \mbox{if $n$ or $j$ is odd}\\
\frac{n!}{(j+1)!(n-j+1)!} & \mbox{if $n$ and $j$ are even}
\end{array} \right.  .
\end{equation}

Note that, as we can see by Lemma \ref{lemma 5.27.1} below, 
for sufficiently smooth $\varphi$   
$$
L^h_t\varphi(x)\to\cL_t^{(0)}\varphi(x),
\quad 
M^h_t\varphi(x)\to\cM_t^{(0)}\varphi(x), 
$$
$$
\frac{\partial^n}{\partial h^n}L^h_t\varphi(x)
\to \cL_t^{(n)}\varphi(x), 
\quad 
\frac{\partial^n}{\partial h^n}M^h_t\varphi(x)
\to\cM_t^{(n)}\varphi(x),
$$
as $h\to0$, and the above system of SPDEs 
is obtained by differentiating 
formally $n$-times equation \eqref{equation} 
in the parameter $h$, 
and taking $h\to0$. 

The notion of solution is understood in the sense of 
Definition \ref{definition solution}. 
To formulate a suitable existence and uniqueness theorem 
we use for integers $l\geq1$ the notation $\bC^l(T)$ 
for the space of $H^{l}$-valued 
weakly continuous adapted processes 
$v=(v_t)_{t\in[0,T]}$ which are strongly continuous as 
$H^{l-1}$-valued processes.  

\begin{theorem}
                                               \label{theorem 5.28.1}
Let Assumptions 
\ref{assumption 1.12.10.12} 
through \ref{assumption 3.7.5.12}
hold with  $\frak m\geq 2k+1$ for an integer 
$k\geq1$.   
Then there is  a unique solution 
$v^{(1)},...,v^{(k)}$
to \eqref{5.28.6} with initial condition 
$v^{(1)}_{0}=...=v^{(k)}_{0}=0$. Moreover,  
$v^{(n)}\in\bC^{\frak m-2n}(T)$ 
and 
\begin{equation}
                                          \label{5.28.7}
E\sup_{t\leq T}|v^{(n)}_{t}|^{p}_{\frak m-2n}
\leq N(E|\psi|_{\frak m}^p+E\cK_{\frak m}^{p}(T))
\quad
\text{for $n=1,...,k$},  
\end{equation}
for $p>0$, with a constant $N=N(\frak m,k,T,K,p,|\Lambda_0|)$. 
If $k$ is an odd number, and 
$\gp^{\lambda}=\gq^{\lambda}=0$ for $\lambda\in\Lambda_0$, 
then Assumptions 
\ref{assumption 1.12.10.12} 
through 
\ref{assumption 3.7.5.12} need only be satisfied with 
$\frak m\geq 2k-1$ for the above conclusions to hold.  
In this case we  
have $v^{(n)}=0$ for odd $n\leq k$. 
\end{theorem}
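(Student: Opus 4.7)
The plan is to proceed by induction on $n$, exploiting the triangular structure of the system: the equation for $v^{(n)}$ is a linear SPDE in $v^{(n)}$ alone (driven by the operators $\cL$ and $\cM^r$) whose free terms are completely determined by the previously constructed $v^{(0)}, v^{(1)}, \dots, v^{(n-1)}$. Concretely, I would rewrite \eqref{5.28.6} as
\begin{equation*}
dv^{(n)}_t = (\cL_t v^{(n)}_t + F^{(n)}_t)\,dt + (\cM^r_t v^{(n)}_t + G^{(n),r}_t)\,dw^r_t,\quad v^{(n)}_0 = 0,
\end{equation*}
with $F^{(n)}_t := \sum_{l=1}^n \binom{n}{l} \cL^{(l)}_t v^{(n-l)}_t$ and $G^{(n),r}_t := \sum_{l=1}^n \binom{n}{l} \cM^{(l)r}_t v^{(n-l)}_t$. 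The base case $n=0$ is given by Theorem \ref{theorem 1.14.10.12}, which provides $v^{(0)}=u \in \bC^{\frak m}(T)$ with the required $L_p$-estimate. At each inductive step I apply Theorem \ref{theorem 1.14.10.12} to this equation with smoothness index $\frak m - 2n$, using that the principal part $(\cL,\cM^r)$ is unchanged and so the stochastic parabolicity condition (Assumption \ref{assumption 3.7.5.12}) and the coefficient regularity (Assumption \ref{assumption 1.15.9.12}) are still satisfied at level $\frak m - 2n$.

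The critical bookkeeping is verifying that $F^{(n)}_t \in H^{\frak m-2n}$ and $G^{(n)}_t \in H^{\frak m-2n+1}(l_2)$. Inspecting \eqref{3.10.10.12}--\eqref{4.10.10.12} together with \eqref{1.9.9}, I would split the sum by parity of $l$: for even $l$, the operator $\cL^{(l)}$ has order $l+2$ and $\cM^{(l)r}$ has order $l+1$; for odd $l$, the coefficients $A_{nj}$ and $B_n$ vanish, so only the $\gp,\gq$-terms of order $l+1$ survive in $\cL^{(l)}$, and $\cM^{(l)r}=0$. Combined with the inductive hypothesis $v^{(n-l)} \in H^{\frak m - 2(n-l)}$, a direct count shows $\cL^{(l)} v^{(n-l)} \in H^{\frak m-2n}$ for all $l \ge 1$ (the borderline $l=1$ case works precisely because the second-order derivative has been replaced by a first-order one), and similarly $\cM^{(l)r} v^{(n-l)} \in H^{\frak m-2n+1}(l_2)$ for even $l \ge 2$. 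The $L_p$-bound \eqref{5.28.7} then follows from the inductive estimates and the estimate \eqref{1.15.10.12} of Theorem \ref{theorem 1.14.10.12} applied with $l = \frak m - 2n$, with the constant $N$ being accumulated through the finitely many inductive steps.

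For the refinement when $k$ is odd and $\gp^{\lambda}=\gq^{\lambda}=0$, I would first observe that in this case $\cL^{(l)}=\cM^{(l)r}=0$ for every odd $l \ge 1$. I then claim that $v^{(n)}=0$ for all odd $n \le k$, proved by a second induction: when $n$ is odd, every term $\cL^{(l)}_t v^{(n-l)}_t$ in $F^{(n)}$ (and similarly in $G^{(n)}$) vanishes, since either $l$ is odd (operator vanishes) or $l$ is even (so $n-l$ is odd, and the inductive hypothesis gives $v^{(n-l)}=0$). Thus the equation for odd $n$ has zero sources and zero initial condition, forcing $v^{(n)}=0$ by uniqueness. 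Consequently the genuine construction only runs over even indices $\le k-1$, and the deepest regularity needed is $\frak m - 2(k-1) \ge 1$, yielding the sharper hypothesis $\frak m \ge 2k-1$.

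The main obstacle, I expect, is the careful regularity accounting in the borderline case $l=1$ of the inductive step: one must notice that the parity-driven cancellation built into \eqref{1.9.9} reduces the nominal order of $\cL^{(1)}$ from $3$ to $2$, so that each inductive step loses exactly two derivatives rather than three. Once this is in place, uniqueness on $[0,T]$ for the whole triangular system follows from the uniqueness clause of Theorem \ref{theorem 1.14.10.12} applied recursively to the difference of two hypothetical solutions, and the weak/strong continuity assertions $v^{(n)} \in \bC^{\frak m-2n}(T)$ are inherited directly from the conclusion of Theorem \ref{theorem 1.14.10.12}.
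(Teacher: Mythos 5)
Your proposal is correct and follows essentially the same route as the paper's proof: a recursion on $n$ exploiting the triangular structure, applying Theorem \ref{theorem 1.14.10.12} at level $\frak m-2n$ once the free terms $F^{(n)}\in H^{\frak m-2n}$, $G^{(n)}\in H^{\frak m-2n+1}(l_2)$ are verified by the same parity-based order count (in particular the observation that $\cL^{(1)}$ has order $2$ rather than $3$ because $A_{1j}=B_1=0$), followed by the same vanishing argument for odd $n$ when $\gp^{\lambda}=\gq^{\lambda}=0$. The only blemish is the parenthetical describing the $l=1$ case as a ``second-order derivative replaced by a first-order one,'' which you correct yourself in the closing paragraph (third order reduced to second); this does not affect the argument.
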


\begin{proof} Though this theorem can be proved in the same way as 
Theorem 3.7 from \cite{G11}, for the convenience of the reader 
we present its proof here. 
 For each $n= 1,\dots,k$ the  equation 
for $v^{(n)}_{t}$
involves only the unknown functions 
$v^{(1)}$,..., $v^{(n)}$. Therefore  
we prove the theorem recursively on $n\leq k$. 
Set 
$$
S^{(n)}=\sum_{i=1}^{n}\tbinom{n}{i}\cL^{(i)}v^{(n-i)}, 
\quad 
R^{(n)\rho}=\sum_{i=1}^{n}\tbinom{n}{i}\cM^{(i)\rho} 
v^{(n-i)},
$$
and consider first the case $n=1$.   
By Theorem \ref{theorem 1.14.10.12}
we have $v^{(0)}\in\bC^{\frak m}(T)$ and 
estimate \eqref{1.15.10.12} holds. 
Since $\cM^{(1)}=0$, 
we have 
$R^{(1)}=0$, and due to 
Assumption \ref{assumption 1.15.9.12} 
we have 
$$
|\cL^{(1)}_{t}v^{(0)}_{t}|_{\frak m-2}
\leq N|v_{t}^{(0)}|_{\frak m}.  
$$
Hence 
$$
\int_0^T|S_t^{(1)}|^2_{\frak m-2}+|R_t^{(1)}|^2_{\frak m-1}\,dt
\leq N \sup_{t\leq T}|v_t^{(0)}|^2_{\frak m}. 
$$
Thus by Theorem \ref{theorem 1.14.10.12} 
there exists a unique $v^{(1)}\in\bC^{\frak m-2}(T)$ satisfying
\eqref{5.28.6} with  zero initial condition, 
and \eqref{5.28.7} holds with $n=1$. 
For $n\in\{2,...,k \} $ 
we assume that there exist $v^{(1)}$,...,$v^{(n-1)}$ 
with the asserted properties.  Then $\cM^{(1)}v^{(n-1)}=0$ and 
$$
|\cL^{(1)}_{t}v^{(n-1)}_{t}|_{\frak m-2n}
\leq N|v_{t}^{(n-1)}|_{\frak m-2n+2}
=N|v_{t}^{(n-1)}|_{\frak m-2(n-1)}, 
$$
and for $i\geq2$ 
$$
|\cL^{(i)}_{t}v^{(n-i)}_{t}|_{\frak m-2n}
\leq N|v^{(n-i)}_{t}|_{\frak m-2n+(i+2)}
= 
N|v^{(n-i)}_{t}|_{\frak m-2(n-i)}, 
$$
$$
\sum_{r=1}^{\infty}
|\cM^{(i)r}v^{(n-i)}|^2_{\frak m-2n+1}
= N|v^{(n-i)}|_{\frak m-2n+1+(i+1)}^{2}
$$
\begin{equation*}
                                              \label{5.28.9}\leq 
N|v^{(n-i)}_{t}|_{\frak m-2(n-i)}. 
\end{equation*}
It follows from the induction hypothesis that 
almost surely 
\begin{equation}
                                               \label{5.28.8}
\int_0^T|S^{(n)}_t|^2_{\frak m-2n}
+|R^{(n)}_t|_{\frak m-2n+1}^2\,dt
\leq N\sum_{i=1}^n
\sup_{t\leq T}|v^{(n-i)}_t|_{\frak m-2(n-i)}^2<\infty,    
\end{equation}
which by Theorem \ref{theorem 1.14.10.12} yields 
the existence of a unique 
$v^{(n)}\in\bC^{\frak m-2n}(T)$ 
satisfying \eqref{5.28.6} with  zero initial condition, 
and estimate \eqref{1.15.10.12} applied to $v^{(n)}$  
via \eqref{5.28.8}
gives 
\eqref{5.28.7}. The proof of the existence of 
$v^{(1)}$,...,$v^{(k)}$ with the stated properties 
is complete. Since for any given $v^{(i)}\in \bC^{\frak m-2i}(T)$, 
$i=1,...,n-1$, the solution $v^{(n)}$ 
to equation \eqref{5.28.6} with zero initial condition 
is uniquely determined, 
the uniqueness of the solution $(v^{(1)}...,v^{(k)})$ 
also holds. 

Notice that $\cM^{(n)}=0$ for odd $n\leq k$ 
by 
\eqref{1.9.9}. 
Assume now that $\gp^{\lambda}=\gq^{\lambda}=0$ for 
$\lambda\in\Lambda_{0}$.    Then also $\cL^{(n)}=0$ 
for odd $n\leq k$ by \eqref{1.9.9}.    
Hence $S^{(1)}=0$ and $R^{(1)}=0$, 
which implies $v^{(1)}=0$.    
Assume that $k\geq2$ and that for an odd $n\leq k$ 
we have $v^{(l)}=0$ for all odd $l<n\leq k$. Then 
$\cL^{(i)}v^{(n-i)}=0$ and $\cM^{(i)}v^{(n-i)}=0$ for 
all $i=1$,...,$n$, since either $i$ or $n-i$ is odd. 
Thus $S^{(n)}=0$ and $R^{(n)}=0$, and hence 
$v^{(n)}=0$ for all odd $n\leq k$. In particular,  
when $k$ is odd we need only solve \eqref{5.28.6} 
for $n=1,..,k-1$, since for $n=k$ the right-hand side of 
equation \eqref{5.28.6} is zero. 
\end{proof}

We evoke now Lemma 3.6 from \cite{G11}.
\begin{lemma}
                                                       \label{lemma 5.27.1}
Let  $n\geq0$ be an integer, let
 $\phi\in W^{n+1}_{2}$, $\psi\in W^{n+2}_{2}$, and $\lambda,
\mu\in\Lambda_0$.
Set 
$$
\delta_{0,\lambda}\phi
=\partial_{\lambda}\phi=\lambda^{i}D_{i}\phi,\quad
\partial_{\lambda\mu}=\partial_{\lambda}\partial_{\mu}.
$$
Then  
 we have
\begin{equation}
                                                      \label{5.28.1}
\frac{\partial^{n}}{(\partial h)^{n}}\delta_{h,\lambda}\phi(x)
=\int_{0}^{1}\theta^{n} \partial_{\lambda}^{n+1}\phi
(x+h\theta\lambda)\,d\theta,
\end{equation}

\begin{equation}
                                                     \label{1.13.12.9}
\frac{\partial^{n}}{(\partial h)^{n}}\delta_{\lambda}^{h}\phi(x)
=\frac{1}{2}\int_{-1}^{1}\theta^{n} \partial_{\lambda}^{n+1}\phi
(x+h\theta\lambda)\,d\theta,
\end{equation}
$$
\frac{\partial^{n}}{(\partial h)^{n}}\delta_{\lambda}^h
\delta_{\mu}^h\psi(x)
$$
\begin{equation}
                                                      \label{5.28.2}
=\frac{1}{4}\int_{-1}^{1}\int_{-1}^{1}
(\theta_{1}\partial_{\lambda}+\theta_{2}
\partial_{\mu})^{n} \partial_{\lambda \mu}
\psi(x+h(\theta_{1}\lambda+\theta_{2}\mu))\,d\theta_{1}d\theta_{2},
\end{equation}
for each $h\neq 0$, for almost all $x\in\bR^{d}$. 
Furthermore, 
\begin{equation}                                   \label{7.13.12.9}
\frac{\partial^{n}}{(\partial h)^{n}}
\delta_{h,\lambda}\phi\big |_{h=0}
=\frac{1}{n+1}\partial_{\lambda}^{n+1}\phi, 
\quad 
\frac{\partial^{n}}{(\partial h)^{n}}
\delta_{\lambda}^{h}\phi\big |_{h=0}
=\frac{B_n}{n+1}\partial_{\lambda}^{n+1}\phi, 
\end{equation}
\begin{equation}                                   \label{8.13.12.9}
\frac{\partial^{n}}{(\partial h)^{n}}\delta_{\lambda}^h
\delta_{\mu}^h\psi\big|_{h=0}
=\sum_{r=0}^{n}A_{n,r}\partial_{\lambda}^{r+1}
\partial_{\mu}^{n-r+1}\psi, 
\end{equation}
and for integers $l\geq0$, functions 
$\phi\in H^{n+2+l}$ and $\psi\in H^{n+3+l}$ 
we have 
\begin{equation}
                                             \label{5.28.3}
\big|\delta_{h,\lambda}\phi-
\sum_{i=0}^{n}
\frac{h^{i}}{(i+1)!}\partial_{\lambda}^{i+1}
\phi\big|_{l}\leq
\frac{|h|^{n+1}}{(n+2)!}
|\partial_{\lambda}^{n+2}\phi|_{l}
\end{equation}
\begin{equation}
                                                    \label{2.13.12.5.9}
\big|\delta_{\lambda}^{h}\phi-
\sum_{i=0}^{n}
\frac{h^{i}}{(i+1)!}B_i\partial_{\lambda}^{i+1}
\phi\big|_{l}\leq
\frac{|h|^{n+1}}{(n+2)!}
|\partial_{\lambda}^{n+2}\phi|_{l}
\end{equation}
\begin{equation}
                                                  \label{5.28.4}
\big|\delta_{\lambda}^{h}
\delta_{\mu}^{h}\psi-\sum_{i=0}^{n}
h^{i} \sum_{j=0}^{i}A_{i,j}
\partial_{\lambda}^{j+1}\partial_{\mu}^{i-j+1}\psi
\big|_{l}\leq N|h|^{n+1}|\psi|_{l+n+3},
\end{equation}
for every $h\neq0$, with a constant $N=N(|\lambda|,|\mu|,d,n)$, 
where $A_{i,j}$ and $B_i$ are defined by 
\eqref{1.9.9}.  
\end{lemma}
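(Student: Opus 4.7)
The plan is to build everything from integral representations obtained via the fundamental theorem of calculus. First, for $\phi\in C_0^\infty(\bR^d)$ and $\lambda\in\Lambda_0$, a single integration against $\theta$ gives
$$
\delta_{h,\lambda}\phi(x) = \int_0^1 \partial_\lambda\phi(x+h\theta\lambda)\,d\theta,
$$
and symmetrizing in $h$ yields $\delta_\lambda^h\phi(x)=\tfrac{1}{2}\int_{-1}^1 \partial_\lambda\phi(x+h\theta\lambda)\,d\theta$. Since $\delta_\mu^h$ commutes with partial derivatives, applying $\delta_\lambda^h$ to $\delta_\mu^h\psi$ and iterating the second representation produces
$$
\delta_\lambda^h\delta_\mu^h\psi(x)=\tfrac{1}{4}\int_{-1}^1\int_{-1}^1 \partial_\lambda\partial_\mu\psi(x+h\theta_1\lambda+h\theta_2\mu)\,d\theta_1\,d\theta_2.
$$
For smooth compactly supported $\phi,\psi$ the integrands are $C^\infty$ in $h$, so differentiating $n$ times under the integral sign is routine and produces exactly the expressions in \eqref{5.28.1}, \eqref{1.13.12.9} and \eqref{5.28.2}. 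I would then extend the identities to the required Sobolev classes by density.

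Setting $h=0$ in the resulting $n$-th derivatives kills the spatial shift, leaving $\partial_\lambda^{n+1}\phi(x)$ (respectively, a polynomial in $\partial_\lambda,\partial_\mu$ applied to $\partial_\lambda\partial_\mu\psi(x)$) times an elementary integral of $\theta^n$. Since $\int_0^1\theta^n\,d\theta=1/(n+1)$ and $\int_{-1}^1\theta^n\,d\theta$ equals $0$ for odd $n$ and $2/(n+1)$ for even $n$, equation \eqref{7.13.12.9} follows with the $B_n$ notation of \eqref{1.9.9}. To deduce \eqref{8.13.12.9}, expand $(\theta_1\partial_\lambda+\theta_2\partial_\mu)^n$ by the binomial theorem, note that the double integral factors, and check that the only surviving terms are those with both $j$ and $n-j$ even (so that $n$ must be even). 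A short computation matches $\tfrac{1}{4}\binom{n}{j}\cdot\tfrac{2}{j+1}\cdot\tfrac{2}{n-j+1}=\tfrac{n!}{(j+1)!(n-j+1)!}$ with the definition of $A_{nj}$.

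For the Taylor remainder estimates \eqref{5.28.3}, \eqref{2.13.12.5.9} and \eqref{5.28.4}, I would expand the integrand of each integral representation in $h$ by Taylor's theorem with integral remainder. For instance, in the representation of $\delta_{h,\lambda}\phi$ write
$$
\partial_\lambda\phi(x+h\theta\lambda)=\sum_{i=0}^n \tfrac{(h\theta)^i}{i!}\partial_\lambda^{i+1}\phi(x)+\tfrac{(h\theta)^{n+1}}{n!}\int_0^1 (1-s)^n\,\partial_\lambda^{n+2}\phi(x+sh\theta\lambda)\,ds,
$$
and integrate in $\theta$. The polynomial part assembles into $\sum_{i=0}^n \tfrac{h^i}{(i+1)!}\partial_\lambda^{i+1}\phi(x)$, while by Minkowski's inequality and the translation-invariance of Lebesgue measure the $H^l$-norm of the remainder is bounded by $\tfrac{|h|^{n+1}}{n!}\int_0^1\theta^{n+1}\int_0^1(1-s)^n\,ds\,d\theta\cdot|\partial_\lambda^{n+2}\phi|_l=\tfrac{|h|^{n+1}}{(n+2)!}|\partial_\lambda^{n+2}\phi|_l$. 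The symmetric and double-integral versions proceed in the same way, with the $B_i$ coefficients and the $A_{ij}$ coefficients appearing from the integrals of $\theta^i$ and $\theta_1^j\theta_2^{i-j}$ respectively. The only obstacle I anticipate is the bookkeeping for this last combinatorial identification; once that is in hand, the Minkowski/translation-invariance estimate gives all three remainder bounds uniformly.
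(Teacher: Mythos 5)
Your proposal is correct and follows essentially the same route as the paper: reduce to $\phi,\psi\in C_0^{\infty}$, obtain the $n=0$ integral representations from the Newton--Leibniz formula, differentiate under the integral sign for $n\geq1$, evaluate at $h=0$ to read off the coefficients $B_n$ and $A_{nj}$, and derive the remainder bounds from Taylor's formula with integral remainder together with Minkowski's inequality and translation invariance. Your write-up in fact supplies more of the elementary computations (the $\int_{-1}^{1}\theta^{n}\,d\theta$ evaluation, the binomial identification of $A_{nj}$, and the explicit $\frac{|h|^{n+1}}{(n+2)!}$ constant) than the paper, which defers these details to its reference \cite{G11}.
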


\begin{proof} 
It suffices to prove the lemma for 
$\phi,\psi\in C_0^{\infty}(\bR^d)$. 
Formulas \eqref{5.28.1} and \eqref{1.13.12.9} 
for $n=0$ can be obtained 
by applying the Newton-Leibniz formula to 
the function 
$\phi(x+\theta h\lambda)$, for $\theta\in[0,1]$ 
and $\theta\in[-1,1]$, respectively.
We get \eqref{5.28.2} for $n=0$ by applying 
\eqref{1.13.12.9} with $n=0$ twice. 
After that we obtain  
\eqref{5.28.1}--\eqref{5.28.2} 
for $n\geq1$ 
by differentiating these equations
written with $n=1$. 
Formulas in \eqref{7.13.12.9} and \eqref{8.13.12.9}
follow from those in 
 \eqref{5.28.1}--\eqref{1.13.12.9} and \eqref{5.28.2}, 
respectively. We obtain estimates \eqref{5.28.3}-\eqref{5.28.4} 
by applying Taylor's formula to the functions 
$$
F(h)=\delta_{h,\lambda}\phi(x)=\int_{0}^{1}\partial_{\lambda}
\phi(x+h\theta\lambda)\,d\theta, 
$$
and 
$$
G(h)=\delta_{\lambda}^{h}\phi(x)=\frac{1}{2}\int_{-1}^{1}\partial_{\lambda}
\phi(x+h\theta\lambda)\,d\theta   
$$
with remainder terms given in integral form, whose 
$H^l$-norm (in $x$) we estimate by Minkowski's inequality. 
For more details we refer to \cite{G11}. 
\end{proof}

Introduce the operators 
$$
\cO^{h(n)}_{t}=L^{h}_{t}-\sum_{i=0}^{n}\frac{h^{i}}{i!}
\cL^{(i)}_{t},
\quad
\cR^{h(n)r}_{t}=M^{h,r}_{t}-
\sum_{i=0}^{n}\frac{h^{i}}{i!}\cM^{(i)r}_{t}, 
\quad r\geq1
$$
for integers $n\geq0$, where $\cL^{(i)}$ and  
$\cM^{(i),r}$ are defined by \eqref{2.10.10.12} through 
\eqref{4.10.10.12}.   

\begin{corollary}
                                           \label{corollary 5.28.01}
Let $l$ be a non-negative  integer. Let 
$\fra^{\lambda\mu}$, $\frb^{\lambda}$, 
$\frak p^{\kappa}$, $\frak q^{\kappa}$ 
 and their derivatives in $x$ up to order $l$ 
be functions and be bounded 
by a constant $C$ for all 
$\lambda,\mu\in\Lambda_1$ and $\kappa\in\Lambda_0$. 
Then for every $n\geq0$ 
\begin{equation}                                \label{1.10.10.12}
|\cO^{h(n)}_{t}|_l\leq N|h|^{n+1} |\phi|_{l+n+3}
\end{equation}
\begin{equation}
                                               \label{5.28.02}
|\cR^{h(n)}_{t}\phi|^2_{l}=\sum_{\rho=1}^{\infty}|\cR^{h(n)\rho}_{t}\phi|^2_{l}
\leq N
|h|^{2n+2} |\phi|^2_{l+n+2},
\end{equation}
where $N$ stands for constants depending only on 
$n$, $d$, $l$, $C$ and $|\Lambda_0|$. 
\end{corollary}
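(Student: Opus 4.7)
My plan is to reduce the estimate to the Taylor-remainder bounds of Lemma \ref{lemma 5.27.1}, applied termwise to the pieces of $L^h$ and $M^{h,r}$. For $\cO^{h(n)}$ I would first split $L^h_t$ according to which of the indices $\lambda,\mu\in\Lambda_1=\Lambda_0\cup\{0\}$ vanish. Using $\delta^h_0=\delta_{h,0}=\delta_{-h,0}=I$, this gives
\[
L^h_t=\sum_{\lambda,\mu\in\Lambda_0}\fra^{\lambda\mu}_t\delta^h_\lambda\delta^h_\mu+\sum_{\lambda\in\Lambda_0}(\fra^{\lambda0}_t+\fra^{0\lambda}_t)\delta^h_\lambda+\fra^{00}_t+\sum_{\gamma\in\Lambda_0}\bigl(\gp^\gamma_t\delta_{h,\gamma}-\gq^\gamma_t\delta_{-h,\gamma}\bigr).
\]
Using \eqref{7.13.12.9}--\eqref{8.13.12.9} one checks that $(\partial/\partial h)^iL^h_t|_{h=0}=\cL^{(i)}_t$ for $\cL^{(i)}_t$ as in \eqref{2.10.10.12}--\eqref{3.10.10.12}, so that $\sum_{i=0}^n(h^i/i!)\cL^{(i)}_t$ is simply the order-$n$ Taylor polynomial in $h$ of $L^h_t$, assembled from the Taylor polynomials of the four pieces above.

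The difference $\cO^{h(n)}_t\phi$ is therefore a sum of four corresponding Taylor remainders, to which I would apply estimate \eqref{5.28.4} for the $\delta^h_\lambda\delta^h_\mu$ part (with loss of $n+3$ derivatives), estimate \eqref{2.13.12.5.9} for the $\delta^h_\lambda$ part (loss $n+2$), estimate \eqref{5.28.3} for the $\delta_{\pm h,\gamma}$ part (loss $n+2$), and no remainder at all for the $\fra^{00}_t$ piece, which is independent of $h$. Multiplying by the coefficients and moving them inside the Sobolev norm requires only Leibniz together with the hypothesis that the derivatives of $\fra^{\lambda\mu},\gp^\gamma,\gq^\gamma$ up to order $l$ are bounded by $C$. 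Summing the at most $|\Lambda_0|^2+3|\Lambda_0|$ contributions yields \eqref{1.10.10.12}, with the worst factor $|\phi|_{l+n+3}$ coming from the second-order piece.

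For $\cR^{h(n),r}$ I would split $M^{h,r}_t=\sum_{\lambda\in\Lambda_0}\frb^{\lambda,r}_t\delta^h_\lambda+\frb^{0,r}_t$, where the second term is constant in $h$, apply \eqref{2.13.12.5.9} to each $\delta^h_\lambda$, and move $\frb^{\lambda,r}_t$ inside the norm by Leibniz. For each $r$ this gives a bound of the form $N|h|^{2(n+1)}\sum_{\lambda\in\Lambda_0}\bigl(\max_{|\beta|\le l}\sup_x|D^\beta\frb^{\lambda,r}_t(x)|\bigr)^2|\phi|^2_{l+n+2}$ for $|\cR^{h(n),r}_t\phi|^2_l$. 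Summing over $r$ and exchanging the $l_2$-sum with the sup in $x$ and $\beta$---legitimate since each $D^\beta\frb^\lambda_t$ is $l_2$-valued with norm bounded by $C$---yields \eqref{5.28.02}. The argument is entirely mechanical; the only real bookkeeping is to verify that the three remainder formulas in Lemma \ref{lemma 5.27.1} cover exactly the three non-constant pieces of $L^h$, and that the $l_2$-accounting for $\frb^\lambda$ is compatible with the boundedness hypothesis.
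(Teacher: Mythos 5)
Your argument is correct and is exactly the route the paper takes: the paper's proof of this corollary consists of the single line that it ``follows immediately from \eqref{5.28.3} through \eqref{5.28.4}'', and your write-up simply makes explicit the termwise decomposition of $L^h$ and $M^{h,r}$, the identification of $\sum_{i\le n}(h^i/i!)\cL^{(i)}$ as the Taylor polynomial via \eqref{7.13.12.9}--\eqref{8.13.12.9}, and the application of the three remainder bounds of Lemma \ref{lemma 5.27.1}. Nothing to add.
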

\begin{proof}
This corollary follows immediately from 
\eqref{5.28.3} through \eqref{5.28.4}. 
\end{proof}

\mysection
{Proof of Theorem \protect\ref{theorem main}} 
                                                                             \label{section proof}

Assume that the conditions of Theorem \ref{theorem 5.28.1} 
hold and set 
\begin{equation}                          \label{1.26.10.9}
\bar r^{h}_{t}
=v^{h}_{t}-\sum_{j=1}^{k}\frac{h^{j}}{j!}v^{(j)}_{t}, 
\end{equation}
where $v^h$ is the unique $L_2$-valued solution of 
\eqref{wholescheme}-\eqref{wholeschemeini}, 
$v^{(0)}$ is the solution of \eqref{equation}-\eqref{ini}, 
and $(v^{(n)})_{n=1}^{k}$ is the solution of the system of stochastic partial differential equations 
\eqref{5.28.6}. 

\begin{lemma}
                                            \label{lemma 5.28.5}
Let Assumptions 
\ref{assumption 1.12.10.12} 
through \ref{assumption 3.7.5.12}
hold with 
$$
\frak m=l+2k+2
$$
for some integers $k\geq0$ and $l\geq0$. 
Then $r^{h}_{0}=0$, $r^{h}\in\bC^{l}(T)$, and
\begin{equation}
                                             \label{5.29.1}
d\bar r^{h}_{t}=(L^{h}_{t}\bar r^{h}_{t}+F^{h}_{t})\,dt
+(M^{h,r}_{t}\bar r^{h}_{t}
+G^{h,r}_{t})
\,dw^{r}_{t},
\end{equation}
where
$$
F^{h}_{t}:=
\sum_{j=0}^{k}\frac{h^{j}}{j!}\cO^{h(k-j)}_{t}
v^{(j)}_{t},
\quad
G^{h,\rho}_{t}:=\sum_{j=0}^{k}\frac{h^{j}}{j!}
\cR^{h(k-j)}_{t}v^{(j)}_{t}.
$$
Moreover, we have that almost surely
\begin{equation}                                \label{1.9.11.12}
\int_0^T|F^{h}_t|^2_l\,dt<\infty
\quad 
\int_0^T|G^{h}_t|^2_{l+1}\,dt<\infty.
\end{equation}
If $k\geq1$ is odd and  
$\gp^{\lambda}=\gq^{\lambda}=0$ 
for $\lambda\in\Lambda_0$ then 
the above assertions remain  
true if Assumptions 
\ref{assumption 1.12.10.12} 
through \ref{assumption 3.7.5.12}
 hold only with 
$
\frak m=l+2k
$
in place of $\frak m=l+2k+2$. 
\end{lemma}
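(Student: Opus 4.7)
My plan is to derive \eqref{5.29.1} by summing the SPDEs for $v^{(0)}, v^{(1)}, \dots, v^{(k)}$ with weights $h^j/j!$ and subtracting the result from the equation satisfied by $v^h$. Define $V^h_t := \sum_{j=0}^{k}\tfrac{h^j}{j!}\,v^{(j)}_t$, so that $\bar r^h_t = v^h_t - V^h_t$. Applying Theorem~\ref{theorem 1.14.10.12} to $v^{(0)}$ and using \eqref{5.28.6} for $n = 1, \dots, k$, linearity of the It\^o differential will yield
\[
dV^h_t = \Bigl(f_t + \sum_{n=0}^{k}\tfrac{h^n}{n!}\sum_{i=0}^{n}\tbinom{n}{i}\cL^{(i)}_t v^{(n-i)}_t\Bigr)dt + \Bigl(g^r_t + \sum_{n=0}^{k}\tfrac{h^n}{n!}\sum_{i=0}^{n}\tbinom{n}{i}\cM^{(i),r}_t v^{(n-i)}_t\Bigr)dw^r_t,
\]
after absorbing the $i = 0$ contribution into the inner sum via $\cL^{(0)} = \cL$ and $\cM^{(0),r} = \cM^r$.

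The combinatorial heart will be the Cauchy-product re-indexing $j := n - i$:
\[
\sum_{n=0}^{k}\tfrac{h^n}{n!}\sum_{i=0}^{n}\tbinom{n}{i}\cL^{(i)}_t v^{(n-i)}_t \;=\; \sum_{i+j\leq k}\tfrac{h^{i+j}}{i!\,j!}\cL^{(i)}_t v^{(j)}_t \;=\; \sum_{j=0}^{k}\tfrac{h^j}{j!}\Bigl(\sum_{i=0}^{k-j}\tfrac{h^i}{i!}\cL^{(i)}_t\Bigr)v^{(j)}_t,
\]
which by the very definition of $\cO^{h(k-j)}_t$ equals $L^h_t V^h_t - F^h_t$; the parallel rearrangement for the diffusion produces $M^{h,r}_t V^h_t - G^{h,r}_t$. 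Subtracting these identities from the equation for $v^h$ supplied by Proposition~\ref{proposition 1.26.10.12} will then produce \eqref{5.29.1}, and $\bar r^h_0 = 0$ is immediate because $v^{(j)}_0 = 0$ for $j \geq 1$ forces $V^h_0 = v^{(0)}_0 = \psi = v^h_0$.

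What remains is a Sobolev-order count establishing \eqref{1.9.11.12} and $\bar r^h \in \bC^l(T)$. By \eqref{3.10.10.12}-\eqref{4.10.10.12}, the operator $\cL^{(i)}_t$ has differential order $i + 2$ and $\cM^{(i),r}_t$ has order $i + 1$, while Theorem~\ref{theorem 5.28.1} places $v^{(j)}$ in $\bC^{\frak m - 2j}(T)$. Under Assumption~\ref{assumption 1.15.9.12}, the map $\cL^{(i)}\colon H^{l+i+2}\to H^l$ is bounded, with the parallel statement $\cM^{(i),r}\colon H^{l+i+2}\to H^{l+1}(l_2)$ (invoking Corollary~\ref{corollary 1.6.10.12} to control the $l_2$-norm). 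The worst case in $F^h_t = \sum_j \tfrac{h^j}{j!}\cO^{h(k-j)}_t v^{(j)}_t$ occurs at $(i,j) = (0,k)$, where the demand $\frak m - 2k \geq l + 2$ gives exactly $\frak m \geq l + 2k + 2$; the analogous count for $G^{h,r}_t$ yields the same bound. The regularity $\bar r^h \in \bC^l(T)$ will then follow from $v^h \in \bC^{\frak m}(T)$ (Proposition~\ref{proposition 1.26.10.12}) and the inclusions $\bC^{\frak m - 2j}(T) \subset \bC^l(T)$ for $j = 0, 1, \dots, k$.

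The main obstacle I anticipate is the sharpening to $\frak m = l + 2k$ when $k$ is odd and $\gp^\lambda = \gq^\lambda = 0$ on $\Lambda_0$. Here I will exploit $A_{nj} = B_n = 0$ for odd $n$ from \eqref{1.9.9}, which by \eqref{3.10.10.12}-\eqref{4.10.10.12} forces $\cL^{(i)} \equiv 0$ and $\cM^{(i),r} \equiv 0$ for every odd $i$, together with the vanishing $v^{(j)} = 0$ for every odd $j \leq k$ provided by Theorem~\ref{theorem 5.28.1}. Consequently only pairs $(i, j)$ with both coordinates even contribute to $F^h$ and $G^{h,r}$, and since $k$ is odd, the largest even $i \leq k - j$ for each even $j \leq k - 1$ is $k - j - 1$ rather than $k - j$. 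This gain of one derivative in every surviving term lowers the worst-case requirement, now attained at $(i, j) = (0, k - 1)$, to $\frak m \geq l + 2k$, matching the statement. The combinatorial identity and the It\^o-linearity step producing \eqref{5.29.1} itself are routine once this parity bookkeeping is in place.
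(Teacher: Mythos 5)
Your proposal is correct and follows essentially the same route as the paper: subtract the $h^j/j!$-weighted sum of the equations \eqref{5.28.6} from the equation for $v^h$, perform the Cauchy-product re-indexing $\sum_n\sum_{i\le n}\binom{n}{i}h^n/n!\,\cL^{(i)}v^{(n-i)}=\sum_{j}\frac{h^j}{j!}\sum_{i\le k-j}\frac{h^i}{i!}\cL^{(i)}v^{(j)}$ to identify $F^h$ and $G^h$ via the definitions of $\cO^{h(n)}$ and $\cR^{h(n)}$, and then count Sobolev orders against the regularity $v^{(j)}\in\bC^{\frak m-2j}(T)$ from Theorem \ref{theorem 5.28.1}. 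Your parity bookkeeping for the odd-$k$ case (using that $\cL^{(i)}=\cM^{(i)r}=0$ for odd $i$ as well as $v^{(j)}=0$ for odd $j$) is in fact spelled out more completely than in the paper, which only cites $v^{(k)}=0$.
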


\begin{proof} 
We have $v^{(h)}\in \bC^{l}(T)$ and $v^{(j)}\in\bC^{l}(T)$, 
for $j\leq k$ by Theorems \ref{theorem 1.14.10.12} 
and \ref{theorem 5.28.1}. 
Hence $r^{h}\in \bC^{l}(T)$. 
Using the equations for $v^{h}$ and $v^{(n)}$ for $n=0,...,k$, 
we can easily see that 
\eqref{5.29.1} holds with $\hat{F}^h$ and $\hat{G}^h$ in place of $F^h$
and $G^h$, respectively, where
$$
\hat{F}^{h}=L^{h}v^{(0)}-\cL v^{(0)}
+\sum_{1\leq j\leq k}L^{h}v^{(j)}\frac{h^j}{j!}
-\sum_{1\leq j\leq k}\cL v^{(j)}\frac{h^j}{j!}-I^{h}, 
$$
$$
G^{h,r}=M^{h,r}v^{(0)}-
\cM^{r} 
{v}^{(0)}
+\sum_{1\leq j\leq k}M^{h,r}v^{(j)}\frac{h^j}{j!}
-\sum_{1\leq j\leq k}\cM^{r}{v}^{(j)}\frac{h^j}{j!}
-J^{h,r}, 
$$
with
$$
I^{h}=\sum_{1\leq j\leq k}\sum_{i=1}^{j}
\frac{1}{i!(j-i)!}\cL^{(i)}v^{(j-i)}h^{j},
$$
$$
J^{h,r}=\sum_{1\leq j\leq k}\sum_{i=1}^{j}
\frac{1}{i!(j-i)!}\cM^{(i)r} v^{(j-i)}h^{j},  
$$
where, as usual, summations over empty sets mean zero. 
Notice that 
$$
I^{h}=\sum_{i=1}^k\sum_{j=i}^{k}
\frac{1}{i!(j-i)!}\cL^{(i)}{v}^{(j-i)}h^{j}
$$
$$
=\sum_{i=1}^{k}\sum_{l=0}^{k-i}
\frac{1}{i!l!}\cL^{(i)}v^{(l)}h^{l+i} 
=\sum_{l=0}^{k-1}\frac{h^{l}}{l!}
\sum_{i=1}^{k-l}\frac{h^i}{i!}\cL^{(i)}v^{(l)}
$$
$$
=\sum_{j=0}^{k}\frac{h^{j}}{j!}
\sum_{i=1}^{ k-j} \frac{h^i}{i!}\cL^{(i)}{v}^{(j)}, 
$$
and similarly, 
$$
J^{h,r}=\sum_{j=1}^k\sum_{i=1}^{j}
\frac{1}{i!(j-i)!}\cM^{(i)r}
{v}^{(j-i)}h^j=
\sum_{j=0}^{k}\frac{h^{j}}{j!}
\sum_{i=1}^{k-j}  \frac{h^i}{i!}\cM^{(i)r}{v}^{(j)}. 
$$
Hence we get  $\hat{F}=F$ and $\hat{G}=G$
by simple calculations, and since  
by Lemma \ref{lemma 5.27.1} for $j=0,1,...,k$, 
$(\omega,t)=\Omega\times[0,T]$ we have 
$$
|\cO^{h(k-j)}_{t}v^{(j)}_{t}|_{l}
\leq N|v^{(j)}_{t}|_{l+k-j+2}
\leq N|v^{(j)}_{t}|_{\frak m-2j}, 
$$
$$
|\cR^{h(k-j)}_{t}v^{(j)}_{t}|_{l}
\leq N|v^{(j)}_{t}|_{l+k-j+1}
\leq N|v^{(j)}_{t}|_{\frak m-2j-1},  
$$
\eqref{1.9.11.12} follows y
by virtue of Theorems \ref{theorem 1.14.10.12}  
and \ref{theorem 5.28.1}. The last statement follows from 
the fact that 
$v^{(k)}=0$ for odd $k$ when $\gp^{\lambda}=\gq^{\lambda}=0$ 
for $\lambda\in\Lambda_0$. 
\end{proof}

Now we present a theorem which
is more general than Theorem 
\ref{theorem main}.  

\begin{theorem}
                                                           \label{theorem 5.29.1}
Let Assumptions \ref{assumption 1.12.10.12} 
through \ref{assumption 3.7.5.12} 
 hold with 
\begin{equation}                                           \label{1.16.12.9}
\frak m=l+2k+3
\end{equation}
for some integer $k\geq0$. 
Then for for $h>0$ and any $p>0$ we have 
\begin{equation}
                                                      \label{5.29.2}
E\sup_{t\leq T}|\bar r^{h}_{t}|^{p}_{l}
\leq N|h|^{p(k+1)}(E|\psi|^p_{\frak m}+E\cK_{\frak m}^{p}(T)) 
\end{equation}
 with a constant  $N=N(T,K,l,d,k,p,|\Lambda_0|)$. 
Moreover, if $\gp^{\lambda}=\gq^{\lambda}=0$ for 
$\lambda\in\Lambda_0$, then 
$v^{(j)}=0$ in 
\eqref{1.26.10.9} for odd $j\leq k$, and if 
$k$ is odd then it is sufficient to assume 
$\frak m= l+2k+2$ instead of \eqref{1.16.12.9} 
to have estimate \eqref{5.29.2}. 
\end{theorem}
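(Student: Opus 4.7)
The plan is to apply Theorem \ref{theorem estimate} to the linear SPDE \eqref{5.29.1} satisfied by $\bar r^h$, which Lemma \ref{lemma 5.28.5} supplies with zero initial condition and controlled free terms, and then to bound those free terms via Corollary \ref{corollary 5.28.01} together with the a priori estimate of Theorem \ref{theorem 5.28.1}. More precisely, Lemma \ref{lemma 5.28.5} shows that $\bar r^h$ solves \eqref{5.29.1} with $\bar r^h_0=0$ and with $F^h,G^h$ belonging almost surely to $L^2([0,T];H^l)$ and $L^2([0,T];H^{l+1})$ respectively, so I would apply Theorem \ref{theorem estimate} with $l$ in place of $\frak m$; this uses Assumption \ref{assumption 1.15.9.12} only at index $l$, which is strictly weaker than what is hypothesised. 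The outcome is
\begin{equation*}
E\sup_{t\leq T}|\bar r^h_t|^p_l\leq N\,E\Big(\int_0^T|F^h_t|^2_l+|G^h_t|^2_{l+1}\,dt\Big)^{p/2}.
\end{equation*}

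Next I would insert the definitions of $F^h$ and $G^h$ and invoke Corollary \ref{corollary 5.28.01} summand by summand. The key combinatorial identity is $h^j\cdot h^{k-j+1}=h^{k+1}$, so each term $\tfrac{h^j}{j!}|\cO^{h(k-j)}_tv^{(j)}_t|_l$ of $F^h_t$, and analogously for $G^h_t$, is bounded by $Nh^{k+1}|v^{(j)}_t|_{l+k-j+3}$ independently of $j$. It therefore remains to control the Sobolev norms $|v^{(j)}_t|_{l+k-j+3}$, and this is exactly what the hypothesis $\frak m=l+2k+3$ is designed to permit: the inequality $l+k-j+3\leq \frak m-2j$ holds for every $j\in\{0,\dots,k\}$, so each $v^{(j)}$ lies in the correct Sobolev space by Theorem \ref{theorem 5.28.1} (for $j\geq 1$) or Theorem \ref{theorem 1.14.10.12} (for $j=0$), with a moment bound of the form $N(E|\psi|^p_{\frak m}+E\cK^p_{\frak m}(T))$. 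Assembling all of this gives \eqref{5.29.2}.

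For the refinement with $\gp^\lambda=\gq^\lambda=0$ and odd $k$, Theorem \ref{theorem 5.28.1} asserts that $v^{(j)}=0$ for every odd $j\leq k$, which immediately yields the second assertion of the theorem. More importantly, since $v^{(k)}=0$ in this regime the worst summand in $F^h$ and $G^h$ is the one at $j=k-1$ (which is even), for which the Sobolev index $l+k-(k-1)+3=l+4$ is to be compared with $\frak m-2(k-1)=\frak m-2k+2$. The sharper condition $\frak m\geq l+2k+2$ is therefore sufficient, and the rest of the argument goes through verbatim.

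The only real obstacle is the index accounting: verifying that the highest Sobolev index $l+k-j+3$ demanded by Corollary \ref{corollary 5.28.01} for the $j$-th summand stays within the regularity $\frak m-2j$ provided by Theorem \ref{theorem 5.28.1}, uniformly in $j\in\{0,\dots,k\}$. Once this is checked, the remaining steps amount to composing Lemma \ref{lemma 5.28.5}, Corollary \ref{corollary 5.28.01}, Theorem \ref{theorem estimate}, and Theorem \ref{theorem 5.28.1} in sequence, and the factor $h^{p(k+1)}$ falls out cleanly from the product $h^j\cdot h^{k-j+1}$ without any further cancellation.
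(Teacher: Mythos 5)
Your proposal is correct and follows essentially the same route as the paper: apply Theorem \ref{theorem estimate} (via Lemma \ref{lemma 1.6.10.12}'s consequence) to the equation \eqref{5.29.1} from Lemma \ref{lemma 5.28.5}, bound $F^h$ and $G^h$ termwise by Corollary \ref{corollary 5.28.01} using $h^j\cdot h^{k-j+1}=h^{k+1}$ and the index inequality $l+k-j+3\leq\frak m-2j$, and close with the moment bounds of Theorem \ref{theorem 5.28.1}. The treatment of the odd-$k$ case via $v^{(k)}=0$ also matches the paper's argument.
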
   

\begin{proof} 
Let $p>0$ such that 
$E|\psi|^p_{\frak m}+E\cK_{\frak m}^{p}(T)<\infty$. 
Lemma  \ref{lemma 5.28.5}
and Theorem \ref{theorem estimate}
yield  
\begin{equation}
                                                                   \label{5.29.4}
 E\sup_{t\leq T}|\bar r^{h}_{t}|_{l}^p
 \leq N(|F^{h}|_{\bH^l_p}^p
+|G^{h}|^{p}_{\bH^{l+1}_p})
\end{equation}
with a constant $N=N(T,k,l,d,K,|\Lambda_0|)$. 
Let \eqref{1.16.12.9} hold. Then for $j=0,...,k$
\begin{equation*}                                                \label{3.16.9.12}
l+k-j+3\leq \frak m-2j, 
\end{equation*}
and 
by Corollary \ref{corollary 5.28.01} we have
\begin{align}                                                                      
|\cO^{h(k-j)}_{t}v^{(j)}_{t}|_{l}+|\cR^{h(k-j)}_{t}v^{(j)}_{t}|_{l+1}
&\leq N|h|^{k-j+1}|v^{(j)}_{t}|_{l +k-j+3}               \nonumber \\     
&\leq N
|h|^{k-j+1} |v^{(j)}|_{\frak m-2j}.                    \label{4.16.9.12}
\end{align}
Hence, using Theorem \ref{theorem 5.28.1}
we see that
$$
|F^{h}|_{\bH^l_p}^p
+|G^{h}|^{p}_{\bH^{l+1}_p}
\leq N|h|^{2(k+1)}(E|\psi|^p_{\frak m}+E\cK_{\frak m}^{p}) , 
$$
which by \eqref{5.29.4} 
implies estimate \eqref{5.29.2}.  
If $p^{\lambda}=q^{\lambda}=0$ for $\lambda\in\Lambda_0$ 
then by Theorem \ref{theorem 5.28.1} we know that 
$v^{(j)}=0$ for odd $j\leq k$. In particular,  
$v^{(k)}=0$ for odd $k$ and 
\eqref{4.16.9.12}  obviously holds for $j=k$ and  
to have it also for $j\leq k-1$  we need only 
$\frak m=l+2k+2$. 
\end{proof}

Set $r^{h}_t=J\bar r^{h}_t$, where $J:H^l\to C_b$, 
is the Sobolev embedding operator from $H^l$ to $C_b$ 
for $l>d/2$. 
Set $\Lambda^0=\{0\}$ and recall that $\delta_{h,0}$ 
is the identity operator and 
$
\delta_{h,\lambda}=\delta_{h,\lambda_1}
\cdot...\cdot \delta_{h,\lambda_n} 
$
for $(\lambda_1,\dots,\lambda_n)\in \Lambda^n$, $n\geq1$.   
Then we have the following corollary of 
Theorem \ref{theorem 5.29.1}

\begin{corollary}                                                            \label{corollary 1.12.10.9}
Let $l>n+d/2$ for an integer $n\geq0$ 
in Theorem \ref{theorem 5.29.1}. Then 
for $\lambda\in\Lambda^n$ we 
have 
\begin{equation}                                      \label{1.23.11.12}
E\sup_{t\in[0,T]}\sup_{x\in\bR^d}
|\delta_{h,\lambda} r^h_{t}( x)|^p
\leq 
Nh^{p(k+1)}E(|\psi|_{\frak m}^p+\cK_{\frak m}^{p}), 
\end{equation}
\begin{equation}                                       \label{2.23.11.12}
E\sup_{t\in[0,T]}
\big\{\sum_{x\in\mathbb G_h}
|\delta_{h,\lambda}r^h_{t}( x)|^2h^d
\big\}^{p/2}
\leq Nh^{p(k+1)}E(|\psi|_{\frak m}^p+\cK_{\frak m}^{p})
\end{equation} 
for $h>0$ with a constant $N=N(d,n,k,K,T,p,|\Lambda_0|)$.  
\end{corollary}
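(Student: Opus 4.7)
The plan is to reduce both \eqref{1.23.11.12} and \eqref{2.23.11.12} to the $H^{l}$-bound on $\bar r^{h}$ furnished by Theorem \ref{theorem 5.29.1}, trading the $n$ finite-difference factors in $\delta_{h,\lambda}$ for a loss of $n$ Sobolev derivatives.

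The first step is to observe that each $\delta_{h,\lambda_{i}}$, being a scalar multiple of a difference of translation operators (or the identity when $\lambda_{i}=0$), commutes with every partial derivative $D^{\alpha}$. Combining this commutation with Lemma \ref{lemma 1.5.10.12} applied to $D^{\alpha}\bar r^{h}_{t}$ for each multi-index $|\alpha|\leq l-n$ and summing, one obtains
\[
|\delta_{h,\lambda}\bar r^{h}_{t}|_{l-n} \leq C\,|\bar r^{h}_{t}|_{l},
\]
with a constant $C$ depending only on $n$, $d$, and $|\Lambda_{0}|$.

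The second step uses the two embeddings available when $l-n>d/2$. The Sobolev operator $J\colon H^{l-n}\to C_{b}$ (introduced just before Lemma \ref{lemma beagyazas}) applies to $\delta_{h,\lambda}\bar r^{h}_{t}$, and since the continuous representative of an $H^{l-n}$-class is unique, $\delta_{h,\lambda}r^{h}_{t}=J(\delta_{h,\lambda}\bar r^{h}_{t})$. Therefore
\[
\sup_{x\in\bR^{d}}|\delta_{h,\lambda}r^{h}_{t}(x)| \leq N|\delta_{h,\lambda}\bar r^{h}_{t}|_{l-n} \leq N|\bar r^{h}_{t}|_{l},
\]
and applying Lemma \ref{lemma beagyazas} to $\delta_{h,\lambda}\bar r^{h}_{t}\in H^{l-n}$,
\[
\sum_{x\in\bG_{h}}|\delta_{h,\lambda}r^{h}_{t}(x)|^{2}h^{d} \leq N|\delta_{h,\lambda}\bar r^{h}_{t}|^{2}_{l-n} \leq N|\bar r^{h}_{t}|^{2}_{l}.
\]

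Finally, taking the supremum over $t\in[0,T]$, raising to the $p$-th power, taking expectations, and invoking the estimate \eqref{5.29.2} of Theorem \ref{theorem 5.29.1} yields \eqref{1.23.11.12} and \eqref{2.23.11.12} simultaneously. This is essentially a packaging argument, so no real obstacle is anticipated; the only mildly delicate point is the intertwining $\delta_{h,\lambda}\circ J=J\circ\delta_{h,\lambda}$, which is settled by uniqueness of the continuous representative.
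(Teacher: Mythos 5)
Your proposal is correct and follows essentially the same route as the paper: lose $n$ derivatives via Lemma \ref{lemma 1.5.10.12} to bound $|\delta_{h,\lambda}\bar r^h_t|_{l-n}$ by $|\bar r^h_t|_l$, then apply the Sobolev embedding into $C_b$ and Lemma \ref{lemma beagyazas} respectively, and finish with estimate \eqref{5.29.2} of Theorem \ref{theorem 5.29.1}. Your extra care about commuting $\delta_{h,\lambda}$ with $D^{\alpha}$ and identifying $\delta_{h,\lambda}r^h_t$ with $J(\delta_{h,\lambda}\bar r^h_t)$ only makes explicit what the paper leaves implicit.
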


\begin{proof} 
Set $j=l-n$. Then $j>d/2$ and by Sobolev's theorem on embedding 
$H^{j}$ into $C_b$ and by 
Lemma  \ref{lemma 1.5.10.12}, 
from Theorem \ref{theorem 5.29.1} we get 
$$
E\sup_{t\in[0,T]}\sup_{x\in\bR^d}|\delta_{h,\lambda}r^h_{t}( x)|^p
\leq C_1E\sup_{t\in[0,T]}|\delta_{h,\lambda}r^h_{t}|_{j}^p
$$
$$
\leq C_2E\sup_{t\in[0,T]}|r^h_{t}|_{l}^p
\leq 
Nh^{p(k+1)}E(|\psi|^p_{\frak m}+\cK_{m}^{p}). 
$$
Similarly, by Lemma \ref{lemma 2.27.4.9} and 
Lemma  \ref{lemma 1.5.10.12}
$$
E\sup_{t\in[0,T]}
\big\{\sum_{x\in\mathbb G_h}|\delta_{h,\lambda}r^h_{t}( x)|^2h^d\big\}^{p/2}
\leq C_1E\sup_{t\in[0,T]}|\delta_{h,\lambda}r^h_{t}|_{l}^p
$$
$$
\leq C_2E\sup_{t\in[0,T]}|R^h_{t}|_{l}^p
\leq 
Nh^{p(k+1)}E(|\psi|^p_{\frak m}+\cK_{\frak m}^{p}). 
$$
\end{proof}

Now we can easily see that Theorem \ref{theorem main} 
follows from the above corollary.

Let the conditions of Theorem \ref{theorem 5.29.1} hold 
with an integer $l>n+d/2$ for some integer $n\geq0$ 
and 
define 
$$
r^h=J\bar r^h, 
\quad 
u^h=Jv^h,\quad u^{(j)}=Jv^{(j)}, \quad j=0,...,k, 
$$
$v^h$ is a continuous 
$H^l$-valued process, and 
by 
Theorem \ref{theorem 5.28.1}
$v^{(j)}$, $j=1,2,...,k$,  are 
$H^{m-2k}$-valued continuous processes. 
By Proposition \ref{proposition 1.26.10.12} 
we know that $u^h$ restricted to $\bG_h$ 
is the unique $l_{2,h}$-valued solution of 
the finite difference scheme and from the previous corollary 
we have that for each $h>0$ almost surely 
\begin{equation}                    \label{3.23.11.12}
\delta_{h,\lambda}u^h_t(x)
=\sum_{j=0}^k\frac{h^j}{j!}\delta_{h,\lambda}u^{(j)}_t(x)
+h^{k+1}\delta_{h,\lambda}r_t^h(x),
\end{equation} 
for all $t\in[0,T]$ and $x\in\bG_h$,  and 
\eqref{1.23.11.12}-\eqref{2.23.11.12} hold 
for any $\lambda\in\Lambda^n$, for integers $n\geq0$ 
such that $l>n+d/2$. Moreover, by 
Theorem \ref{theorem 5.29.1} we have that $u^{(j)}=0$ 
for odd $j\leq k$ provided $\frak p^{\lambda}=\frak q^{\lambda}=0$ 
for $\lambda\in\Lambda_0$, and in this case for odd $k$ it is sufficient 
to assume that Assumptions \ref{assumption 1.12.10.12} 
through \ref{assumption 3.7.5.12} hold with $\frak m\geq l+k+2$ 
and $l>n+d/2$, to have \eqref{1.16.12.9} 
and estimates for all $h\neq0$. 
Hence Theorem \ref{theorem main}  follows immediately.

\end{document}